\definecolor{darkred}{rgb}{0.5,0,0}
\definecolor{darkgreen}{rgb}{0,0.5,0}
\definecolor{darkblue}{rgb}{0,0,0.5}
\numberwithin{equation}{section}
\newtheorem{proposition}{Proposition}[section]
\newtheorem{lemma}[proposition]{Lemma}
\newtheorem{corollary}[proposition]{Corollary}
\newtheorem{claim}[proposition]{Claim}
\theoremstyle{definition}
\newtheorem{theorem}{Theorem}
\newtheorem*{theorem*}{Theorem}
\newtheorem{definition}[proposition]{Definition}
\newtheorem{remark}[proposition]{Remark}
\setlist{nosep}
\setlist{noitemsep}
\setlist{leftmargin=*}
\DeclarePairedDelimiter\floor{\lfloor}{\rfloor}
\def\XXint#1#2#3{{\setbox0=\hbox{$#1{#2#3}{\int}$}
     \vcenter{\hbox{$#2#3$}}\kern-.5\wd0}}
\newcommand{\R}{\mathbb{R}}
\newcommand{\Z}{\mathbb{Z}}
\renewcommand{\epsilon}{\varepsilon}
\newcommand{\hal}{\frac{1}{2}}
\newcommand{\dd}{\mathtt{d}}
\newcommand{\E}{\mathbb{E}}
\newcommand{\I}{\mathbf{1}}
\newcommand{\Jd}{\mathrm{J}_{\d/2}}
\newcommand{\dist}{\mathrm{dist}}
\newcommand{\La}{\mathcal{L}}
\newcommand{\DD}{\mathcal{D}}
\newcommand{\Cov}{\mathrm{Cov}}
\newcommand{\Wass}{\mathrm{Wass}}
\newcommand {\DF}[1]{\todo[inline,size=\footnotesize,color=cyan]{\textbf{DF:} #1}}
\begin{document}
\title{\LARGE{(Non)-hyperuniformity of perturbed lattices}}
\author{\large{David Dereudre\thanks{University of Lille, LPP UMR 8524, \texttt{david.dereudre@univ-lille.fr}}, Daniela Flimmel\thanks{Charles University,
\texttt{daniela.flimmel@karlin.mff.cuni.cz}}, Martin Huesmann\thanks{Institute for Mathematical Stochastics, University of Münster, \texttt{martin.huesmann@uni-muenster.de}}, Thomas Leblé\thanks{ Université de Paris-Cité, CNRS, MAP5 UMR 8145, F-75006 Paris, France \texttt{thomas.leble@math.cnrs.fr} }}}

\newcommand{\bX}{\mathbf{X}}
\newcommand{\X}{\mathrm{X}}
\renewcommand{\d}{\mathsf{d}}
\newcommand{\Rd}{\R^\d}
\newcommand{\Zd}{\Z^\d}
\newcommand{\B}{\mathrm{B}}
\newcommand{\BR}{\B_r}
\newcommand{\Var}{\mathrm{Var}}
\newcommand{\SL}{\mathbf{L}}
\newcommand{\PSL}{\mathbf{PL}}
\renewcommand{\P}{\mathbb{P}}
\newcommand{\p}{\mathrm{p}}
\newcommand{\bp}{\bar{\p}}

\date{\small{\today}}
\maketitle
\begin{abstract}
We ask whether a stationary lattice in dimension $\d$ whose points are shifted by identically distributed but possibly dependent perturbations remains hyperuniform. {When $\d = 1$ or $2$, we show that it is the case when the perturbations have a finite $\d$-moment,} and that this condition is sharp. When $\d \geq 3$, we construct arbitrarily small perturbations such that the resulting point process is not hyperuniform. {As a side remark of independent interest, we exhibit hyperuniform processes with arbitrarily slow decay of their number variance.}
\end{abstract}

\renewcommand{\O}{\mathcal{O}}

\newcommand{\kk}{\mathsf{k}}
\newcommand{\1}{\mathsf{1}}
\newcommand{\2}{\mathsf{1}}
\newcommand{\Hess}{\mathrm{Hess}}

\section{Introduction}
\label{sec:intro}
\subsection{Setting and main results}
Let $\d \geq 1$ be the ambient dimension. A \emph{point configuration} is a locally finite collection of points in $\Rd$, possibly with multiplicities, which we identify with a purely atomic Radon measure $\bX$ (see Section \ref{sec:pointprocesses} for technical details). For $r > 0$, we let $\BR$ be the closed Euclidean ball of radius $r$ centered at the origin and if $\bX$ is a point configuration we denote by $|\bX \cap \BR|$  the number of its points in $\BR$. The following definition has received a lot of interest in the past twenty years:

\vspace{0.2cm}
Let $\bX$ be a \emph{random} point configuration (i.e. a \emph{point process}). We say that $\bX$ is \emph{hyperuniform} when:
\begin{equation}
\label{def:HU}
{\sigma(r) := \frac{\Var\left[|\bX \cap \BR|\right]}{|\BR|} \longrightarrow 0} \text{ as } r \to \infty,
\end{equation}
where $|\BR|$ denotes the volume of a ball of radius $r$. The quantity $\Var[|\bX \cap \BR|]$ is called the \emph{number variance} (of the random point configuration $\bX$, in $\BR$) {and $\sigma$ the rescaled number variance}.
Of course, the volume $|\BR|$ is proportional to $r^\d$ so when \eqref{def:HU} holds one can wonder how slow the number variance growth actually is compared to $r^\d$. In the terminology of \cite{torquato2018hyperuniform}, $\bX$ is said to be \emph{hyperuniform of class I} when in fact:
\begin{equation}
\label{typeI}
\Var\left[|\bX \cap \BR|\right] = \O(r^{\d-1}) \text{ as } r \to \infty.
\end{equation}
The basic example of a class I-hyperuniform point process is the “stationary lattice” $\SL : = \{x + U, \ x \in \La\}$, where $\La$ is some lattice in $\Rd$ (i.e.\ $\Zd$ or the image of $\Zd$ by an affine transformation), and the random variable $U$ is uniformly distributed in a fundamental domain of $\La$. Since \eqref{typeI} is the slowest possible growth (cf. \cite[Thm.\ 2A]{Be87}), stationary lattices are “as hyperuniform as it gets”. 

In this paper, we consider \emph{perturbed lattices} of the form $\PSL := \{ x + U + \p_x, \ x \in \La\}$ where $U$ is as above while $(\p_x)_{x \in \La}$ is a family of random variables independent of $U$, thought of as “perturbations”, with values in $\Rd$, and we discuss whether $\PSL$ remains hyperuniform or not. 

When the perturbations are independent and identically distributed, the answer is well-known: \emph{in any dimension}, if $\p_0$ has a \emph{finite first moment}, then $\PSL$ remains \emph{hyperuniform of class I}, see \cite{gacs1975problem}. In fact, even if $\p_0$ has only a finite moment of order $0 < s < 1$, then $\PSL$ remains hyperuniform (but not necessarily of class I). 

For dependent perturbations, with (or without) information about the structure of dependence (as discussed e.g. in \cite{gabrielli2004point,Fli2025,KLLY}), the question was mainly open.

Throughout this paper, we make the following assumption: the perturbations $(\p_x)_{x \in \La}$ are identically distributed, and moreover their joint distribution is $\La$-invariant in the sense that for all $x_\star$ in $\La$, $(\p_x)_{x \in \La}$ and $(\p_{x+x_\star})_{x \in \La}$ have the same  distribution. It implies that the perturbed lattice $\PSL$ itself is stationary. 

We may now state our main results. Let us emphasize that \emph{we allow the perturbations to be dependent}.
\begin{theorem}[(Non)-hyperuniformity of perturbed lattices]  
\label{theo:main} 
\

\begin{itemize}
   \item For all $\d \geq 1$, if $\E[|\p_0|^\d]$ is finite, then the number variance of $\PSL$ in $\BR$ is finite for every $r > 0$.
   \item For $\d = 1$ or $2$, if $\E[|\p_0|^\d]$ is finite, then $\PSL$ is hyperuniform in the sense of \eqref{def:HU}.
   \item For all $\d \geq 1$, taking $\La = \Zd$, if $X$ is a positive random variable such that $\E[X^\d] = + \infty$, then there exist perturbations $(\p_x)_{x \in \Zd}$, with $|\p_0| \leq X$ almost surely, such that the resulting perturbed lattice has infinite number variance in finite balls. 
   \item For all $\d \geq 3$, taking $\La = \Zd$, for all $\epsilon > 0$ there exist perturbations  $(\p_x)_{x \in \Zd}$ with $|\p_0| \leq \epsilon$ almost surely, such that the resulting perturbed lattice satisfies $\lim_{r \to + \infty} \frac{\Var\left[|\PSL \cap \BR|\right]}{|\BR|} = + \infty$.
\end{itemize}
\end{theorem}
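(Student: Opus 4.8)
We describe how to prove the fourth item. The plan is to kill hyperuniformity by correlating all the perturbations along a single fixed axis, with a power-law correlation whose exponent lies in a window that is nonempty precisely when $\d\geq 3$. First I would fix $\beta\in(0,\d-2)$ (nonempty exactly because $\d\geq 3$), set $\epsilon':=\min(\epsilon,1)$, and take $\rho(\xi):=\min(|\xi|^{\beta-\d},1)$ on the torus $[-\pi,\pi]^\d$; since $\beta>0$ this $\rho$ is integrable, so $K(z):=\int_{[-\pi,\pi]^\d}e^{i\xi\cdot z}\rho(\xi)\,d\xi$ is positive-definite on $\Zd$ and there is a centered stationary Gaussian field $\phi=(\phi_x)_{x\in\Zd}$, independent of $U$, with $\Cov(\phi_x,\phi_y)=K(x-y)$. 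I would then set $q_x:=\operatorname{sign}(\phi_x)\in\{-1,1\}$ and $\p_x:=\epsilon' q_x e_1$, so that $|\p_0|=\epsilon'\leq\epsilon$ a.s.\ and $(\p_x)$ is $\Zd$-stationary. The one structural fact to record here: by the classical identity $\Cov(q_x,q_y)=\tfrac2\pi\arcsin\!\big(K(x-y)/K(0)\big)$, the nonnegativity of the Taylor coefficients of $\arcsin$ (the first being $1$), and the Schur product theorem (each odd power of the correlation $K(\cdot)/K(0)$ is again positive-definite on $\Zd$), one has for every finitely supported real $f$ on $\Zd$
\[
\sum_{x,y}f_x f_y\,\Cov(q_x,q_y)\;\geq\;\frac{2}{\pi K(0)}\sum_{x,y}f_x f_y\,K(x-y)\;=\;\frac{2}{\pi K(0)}\int_{[-\pi,\pi]^\d}\rho(\xi)\,|\hat f(\xi)|^2\,d\xi,\qquad \hat f(\xi):=\sum_x f_x\,e^{-i\xi\cdot x},
\]
the equality being the spectral (Bochner) representation of the covariance of $\phi$.

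Next I would reduce the number variance to a boundary Fourier integral. Write $N_r:=|\PSL\cap\BR|$ and $h_r(y):=\big|[0,1)^\d\cap(\BR-y)\big|$; then $h_r$ is Lipschitz, supported in $\B_{r+\sqrt\d}$, satisfies $\sum_{x\in\Zd}h_r(x+v)=|\BR|$ for all $v$, and $\E[N_r\mid q]=\sum_x h_r(x+\epsilon' q_x e_1)$ where $q=(q_x)$. Since $q_x^2=1$, each summand is affine in $q_x$, whence
\[
\E[N_r\mid q]\;=\;|\BR|+\sum_x q_x\,\bar\delta_x,\qquad \bar\delta_x:=\tfrac12\big(h_r(x+\epsilon' e_1)-h_r(x-\epsilon' e_1)\big),
\]
with $\bar\delta$ real, $|\bar\delta_x|\leq\tfrac12$, and $\bar\delta_x=0$ unless $\big||x|-r\big|<\sqrt\d+\epsilon'$, so $\bar\delta$ is supported on $O(r^{\d-1})$ lattice points. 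By the law of total variance and the inequality above applied to $f=\bar\delta$,
\[
\Var[N_r]\;\geq\;\Var\!\Big(\sum_x q_x\bar\delta_x\Big)\;=\;\sum_{x,y}\bar\delta_x\bar\delta_y\,\Cov(q_x,q_y)\;\geq\;\frac{2}{\pi K(0)}\int_{[-\pi,\pi]^\d}\rho(\xi)\,\big|\widehat{\bar\delta}(\xi)\big|^2\,d\xi.
\]

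The heart of the argument is to lower-bound this integral. Here $\widehat{\bar\delta}$ is a trigonometric polynomial; from $\sum_x h_r(x+v)\equiv|\BR|$ one gets $\sum_x\bar\delta_x=0$ and $\sum_x x\,\bar\delta_x=-\epsilon'|\BR|\,e_1$, while $\sum_x|x|^2|\bar\delta_x|=O(r^{\d+1})$ because $\bar\delta$ lives on the shell; a second-order Taylor expansion at the origin then gives $\widehat{\bar\delta}(\xi)=i\,\epsilon'|\BR|\,\xi_1+O(r^{\d+1}|\xi|^2)$. Consequently there is a dimensional constant $c=c(\d)\in(0,1)$ so that the linear term dominates on the cone-annulus
\[
G_r:=\Big\{\xi\in\R^\d:\ \tfrac{c\epsilon'}{2r}\leq|\xi|\leq\tfrac{c\epsilon'}{r},\ |\xi_1|\geq\tfrac12|\xi|\Big\},\qquad |G_r|\asymp_\d(\epsilon'/r)^\d,
\]
namely $|\widehat{\bar\delta}(\xi)|\gtrsim_\d (\epsilon')^2 r^{\d-1}$ there; and on $G_r$ also $\rho(\xi)=|\xi|^{\beta-\d}\gtrsim_\d(r/\epsilon')^{\d-\beta}$. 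Hence, for all large $r$,
\[
\int_{[-\pi,\pi]^\d}\rho\,\big|\widehat{\bar\delta}\big|^2\;\geq\;\int_{G_r}\rho\,\big|\widehat{\bar\delta}\big|^2\;\gtrsim_\d\;(r/\epsilon')^{\d-\beta}\cdot(\epsilon')^4 r^{2\d-2}\cdot(\epsilon'/r)^\d\;=\;(\epsilon')^{\beta+4}\,r^{\,2\d-2-\beta},
\]
so $\Var[N_r]\gtrsim_{\d,\beta,\epsilon'}r^{2\d-2-\beta}$, and therefore $\Var[N_r]/|\BR|\gtrsim_{\d,\beta,\epsilon'}r^{\,\d-2-\beta}\to+\infty$ since $\beta<\d-2$. (Each $\Var[N_r]$ is finite — e.g.\ by the first item, or since $N_r$ is bounded — so this genuinely gives $\sigma(r)\to+\infty$.)

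I expect two points to require real care. First, the construction: one needs \emph{long-range, positive} correlations carried by a \emph{bounded} perturbation field, together with the quadratic-form bound $\Cov(q)\succeq \tfrac{c}{K(0)}K$; this is exactly what the $\pm1$-valued choice $q_x=\operatorname{sign}(\phi_x)$, the $\arcsin$-expansion, and Schur positivity deliver (a naive moving-average field would fail to be bounded, and i.i.d.\ perturbations can never push the number variance above order $r^\d$). Second, and this is the main technical obstacle, the Fourier estimate: one must control $\widehat{\bar\delta}$ near the origin precisely enough — with error $O(r^{\d+1}|\xi|^2)$ — so that the linear term $i\epsilon'|\BR|\xi_1$ really wins on a frequency shell of radius $\asymp 1/r$. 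The remaining ingredients (the shell count $\#\{x:\big||x|-r\big|\le C\}=O(r^{\d-1})$, the invariance $\sum_x h_r(x+v)=|\BR|$, the law of total variance) are routine.
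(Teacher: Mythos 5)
Your proposal only addresses the fourth bullet of Theorem \ref{theo:main}. The first three items --- in particular the second one, the positive result for $\d=1,2$, which is the main theorem of the paper and by far the hardest part (Section \ref{sec:d2p>2} plus Appendix \ref{sec:ProofB3}) --- are not touched at all, so as a proof of the stated theorem this is substantially incomplete.

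For the fourth item itself, your route is genuinely different from the paper's and is essentially sound. The paper (Proposition \ref{prop:bounded}) works entirely in physical space: in blocks of side $N$ it pushes every point radially by $\epsilon$ away from a uniform center, shows via Claim \ref{claim:quiquestla} and Landau's bound \eqref{eq:Landau} that with probability $\gtrsim (r/N)^{2\d}$ the ball $\BR$ loses $\gtrsim \epsilon r^{\d-1}$ points, and then mixes over $N$ to get $\sigma(r)\gtrsim r^{\d-2-\delta}$. You instead take a single stationary field $\p_x=\epsilon' \,\mathrm{sign}(\phi_x)e_1$ with $\phi$ Gaussian of spectral density $\sim|\xi|^{\beta-\d}$, $\beta\in(0,\d-2)$, and lower-bound the variance by conditioning on the signs, using the arcsine identity plus Schur positivity to dominate $\Cov(q_x,q_y)$ by the Gaussian covariance, and then estimating the resulting shell-supported trigonometric sum $\widehat{\bar\delta}$ on frequencies $|\xi|\asymp 1/r$. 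This is precisely the rigorous implementation (with \emph{bounded} perturbations) of the structure-factor heuristic that the paper discusses in Section \ref{sec:open} but deliberately avoids; what the paper's construction buys in exchange is elementarity and reusability (the same blocks give the third item, the $\d=2$ part of Theorem \ref{theo:classI}, and Theorem \ref{theo:slow}). Two slips to fix: (i) as written, $\rho(\xi)=\min(|\xi|^{\beta-\d},1)$ equals $1$ for $|\xi|\leq 1$, so on $G_r$ you would only get $\sigma(r)\gtrsim r^{-2}$; you clearly intend the pure power law $\rho(\xi)=|\xi|^{\beta-\d}$ (integrable since $\beta>0$), which is what your later estimate uses, so drop the cap or cap from below instead. (ii) The identity $\sum_x x\,\bar\delta_x=-\epsilon'|\BR|e_1$ does \emph{not} follow exactly from $\sum_x h_r(x+v)\equiv|\BR|$; writing $\sum_x x\,h_r(x\pm\epsilon' e_1)=\int_{\BR}\lfloor z\mp\epsilon' e_1\rfloor\,\dd z$ one sees it holds only up to an error $\O(r^{\d-1})$ (coming from lattice-point fluctuations of the shifted balls, improvable via \eqref{eq:Landau}). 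This error is of lower order than $\epsilon'|\BR|$ and also than your main term $(\epsilon')^2 r^{\d-1}$ once multiplied by $|\xi|\asymp\epsilon'/r$, so the conclusion $|\widehat{\bar\delta}(\xi)|\gtrsim(\epsilon')^2r^{\d-1}$ on $G_r$ survives, but the step should be stated as an approximation, not an identity.
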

The third item shows that in dimension $1$ and $2$ the sufficient condition of the second item allowing to retain hyperuniformity is particularly sharp, because anything below that threshold can give rise to processes which not only are \emph{not hyperuniform}, but have in fact \emph{infinite number variance in finite balls}. In higher dimensions ($d \geq 3$) the last item shows that arbitrarily small perturbations can break hyperuniformity.

\begin{theorem}[{About class-I hyperuniformity}]
\label{theo:classI}
\

\begin{itemize}
   \item For $\d = 1$, if $\E[|\p_0|^2]$ is finite, then $\PSL$ is class~I hyperuniform (which, in dimension $1$, means that the number variance stays bounded as the size of the interval goes to $+ \infty$). This condition is sharp. 
   \item For $\d = 2$, there are perturbations of $\Z^2$ which are almost surely bounded by an arbitrarily small constant, and for which the perturbed lattice is not class~I hyperuniform. 
   % {In fact, the decay of the rescaled number variance can be arbitrarily slow.}
\end{itemize}
\end{theorem}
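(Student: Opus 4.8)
\textbf{Proof proposal (second item of Theorem~\ref{theo:classI}).}
Given any $\epsilon\in(0,\tfrac12)$, the plan is to perturb each vertical column of $\Z^2$ by an independent, small, random \emph{horizontal} jitter. Let $(\sigma_a)_{a\in\Z}$ be i.i.d.\ uniform signs in $\{-1,+1\}$, let $U=(U_1,U_2)$ be uniform on $[0,1)^2$ and independent of the $\sigma_a$, and set $\p_{(a,b)}:=(\epsilon\sigma_a,0)$ for every $(a,b)\in\Z^2$. This family is identically distributed and $\Z^2$-invariant (its law depends only on the law of the stationary sequence $(\sigma_a)_a$), one has $|\p_0|=\epsilon$ almost surely, and since $2\epsilon<1$ the columns of the perturbed lattice $\PSL=\{(a+U_1+\epsilon\sigma_a,\,b+U_2):(a,b)\in\Z^2\}$ stay pairwise disjoint, so $\PSL$ is a simple stationary point process of intensity~$1$. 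I will show that $\Var[|\PSL\cap\BR|]\gtrsim\epsilon^2\,r\log r$, which is not $\O(r^{\d-1})=\O(r)$; hence $\PSL$ is not class~I hyperuniform, even though it is still hyperuniform by the second item of Theorem~\ref{theo:main}, and even though $\epsilon$ is arbitrarily small.

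Write $N_r:=|\PSL\cap\BR|$ and let $C_a$ be the number of points of column $a$ that lie inside $\BR$, so $N_r=\sum_{a\in\Z}C_a$. Conditionally on $U$, each $C_a$ is a function of $\sigma_a$ alone, so the $C_a$ are independent given $U$ and, $\sigma_a$ being a symmetric sign,
\begin{equation}
\Var[N_r\mid U]=\sum_a\Var[C_a\mid U]=\tfrac14\sum_a m_a(U)^2,
\end{equation}
where $m_a(U)$ denotes the number of integers $b$ such that the point $(a,b)$ of column $a$ lies in $\BR$ for exactly one of the two values $\sigma_a=\pm1$. An elementary computation identifies $m_a(U)$, with $s:=|a+U_1|$, as the number of $b\in\Z$ such that $(b+U_2)^2$ lies in the half-open interval of length $4s\epsilon$ between $r^2-(s+\epsilon)^2$ and $r^2-(s-\epsilon)^2$. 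By the law of total variance, $\Var[N_r]\ge\E_U[\Var[N_r\mid U]]=\tfrac14\E_U[\sum_a m_a(U)^2]$, so it is enough to bound $\sum_a m_a(U)^2$ from below, uniformly in $U$.

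The geometric heart of the matter is that $\partial\BR$ is vertical at its right- and left-most points, so an $\epsilon$-sized horizontal shift of a column that is nearly tangent to $\partial\BR$ moves many lattice points in or out at once. Quantitatively, for a column with $s=r-\delta$ and $1\le\delta\lesssim r$, the set of real numbers $t$ with $t^2$ in the interval defined above is a union of two disjoint intervals, each of length
\begin{equation}
\frac{2s\epsilon}{\sqrt{(r-s)(r+s)}}\;\asymp\;\epsilon\sqrt{r/\delta};
\end{equation}
as soon as this length exceeds $2$, i.e.\ for $1\le\delta\lesssim r\epsilon^2$, each of these intervals contains at least half its length in integers, so $m_a(U)\gtrsim\epsilon\sqrt{r/\delta}$ \emph{for every} $U$. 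For each integer $\delta$ with $2\le\delta\lesssim r\epsilon^2$ there is at least one column $a$ with $|a+U_1|\in(r-\delta,\,r-\delta+1]$; keeping only these $\O(r\epsilon^2)$ columns gives, uniformly in $U$,
\begin{equation}
\sum_a m_a(U)^2\;\gtrsim\;\sum_{2\le\delta\lesssim r\epsilon^2}\epsilon^2\,\frac r\delta\;\gtrsim\;\epsilon^2\,r\log(r\epsilon^2),
\end{equation}
which, for all sufficiently large $r$ (depending on $\epsilon$), is $\gtrsim\epsilon^2 r\log r$. Therefore $\Var[N_r]\gtrsim\epsilon^2 r\log r$ and $\Var[N_r]/r\to+\infty$, which is the assertion.

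The only genuinely delicate point is the uniform-in-$U$ lower bound on $m_a(U)$: one has to keep track of the two interval endpoints precisely, handle separately the $\O(1)$ columns lying within $\epsilon$ of the tangent lines (where the set of admissible $t$ degenerates to a single interval around $0$) and the columns so far from the tangent lines that their swept interval contains only $\O(1)$ integers (those contribute merely to the $\O(\epsilon r)$, class-I part of the variance), and check that no arithmetic coincidence between $r$, $U$ and $\Z^2$ can make all the $m_a(U)$ simultaneously small --- which is impossible precisely because the relevant intervals have length $\gg1$. Everything else is routine bookkeeping with the circle geometry; the argument is ``softer'' than those behind Theorem~\ref{theo:main} in dimension $\d\ge3$, since here the number variance only needs to beat $r$ rather than $r^{\d}$.
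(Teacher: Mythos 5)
Your construction is correct, and it takes a genuinely different route from the paper. You only address the second item, which is fine to flag: the first item ($\d=1$) is proved separately in the paper's Appendix~\ref{sec:dim1} and is untouched by your argument. For the second item, the paper (Proposition~\ref{prop:bounded}, Section~\ref{sec:Counter3+}) works with a radial $\epsilon$-push away from a uniformly chosen center in a box of side $N$, made stationary by a random translation, and then takes a heavy-tailed mixture over the scale $N$ (with $\P[\mathcal{N}=N]\propto N^{-1-\delta}$); the lower bound comes from showing, via Claim~\ref{claim:quiquestla} and Landau's lattice-point estimate \eqref{eq:Landau}, that with probability $\gtrsim (r/N)^{2\d}$ the ball loses $\gtrsim \epsilon r^{\d-1}$ points, giving $\Var[|\PSL\cap\BR|]/|\BR|\geq r^{-\delta}$. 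Your example is fixed (no mixture over scales), anisotropic, and entirely elementary: i.i.d.\ $\pm\epsilon$ horizontal shifts applied column-wise, a conditional-variance (ANOVA) decomposition given the global shift $U$, and the tangency geometry of $\partial\B_r$ at its left/right-most points, where an $\epsilon$-shift of a column at distance $\delta$ from tangency sweeps $\asymp\epsilon\sqrt{r/\delta}$ lattice points; summing $\epsilon^2 r/\delta$ over $2\leq\delta\lesssim r\epsilon^2$ gives $\Var\gtrsim\epsilon^2 r\log r$, which indeed beats $\O(r)$, and the uniformity in $U$ is genuinely unproblematic since each swept interval has length $\geq 2$ and hence captures at least half its length in shifted integers. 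What each approach buys: yours is shorter, needs no number-theoretic input beyond "an interval of length $L\geq 2$ contains $\geq L/2$ integers", and exhibits a clean class-II-type growth $r\log r$; the paper's construction produces a stronger, power-law violation ($\Var\gtrsim r^{2-\delta}$, with the remark that even slower decays of $\sigma$ are reachable) and is the same building block used for the $\d\geq 3$ counterexamples and for Theorem~\ref{theo:slow}, which is why the authors chose it. One further point worth noting: your perturbation field has covariance $\epsilon^2$ along entire columns, hence \eqref{covariancesummable} fails and its spectral measure is singular on a line through $0$; this is exactly the mechanism the paper's Section~\ref{sec:open} predicts must be present, so your example is consistent with (and nicely illustrates) the open question raised there rather than bearing on it.
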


We thus obtain with the first item a sharp sufficient condition for class-I hyperuniformity of perturbed lattices in dimension $1$ based only on the size of the perturbations. The sharpness means that we can construct a perturbation such that $\E[|\p_0|^2]=+\infty$ and $\PSL$ is not class~I hyperuniform, see Remark \ref{rem:sharpd1}. In dimension $2$, the second item indicates that any sufficient condition should presumably involve the correlation structure of the perturbations, see Section \ref{sec:open}.

\newcommand{\tsigma}{\tilde{\sigma}}
\begin{theorem}[Arbitrarily slow decay of the number variance]
\label{theo:slow}
Let $\tsigma : (0, + \infty) \to (0, + \infty)$ be a non-increasing positive function with $\lim_{r \to + \infty} \tsigma(r) = 0$. For all $\d \geq 1$, there exist $c > 0$ and a perturbed lattice $\PSL$ such that:
\begin{enumerate}
   \item $\PSL$ is hyperuniform.
   \item The rescaled number variance $\sigma$ of $\PSL$ (see \eqref{def:HU}) satisfies:
   \begin{equation}
   \label{eq:sigmatsigma}
      \sigma(r) \geq c \tsigma(r) \text{ for all } r \geq 1.
   \end{equation}
\end{enumerate}
\end{theorem}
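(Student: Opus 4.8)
The plan is to realise the required process as an i.i.d.\ perturbation of $\Zd$ by an \emph{isotropic, heavy‑tailed} displacement whose tail is prescribed by $\tsigma$. Fix a random variable $R\ge0$ with $\P(R\ge 2u)\ge\min(1,\tsigma(u))$ for all $u>0$ and with $\P(R\ge u)\to0$ as $u\to\infty$ (such an $R$ obviously exists); set $\bar F(u):=\P(R\ge u)$, let $\Theta$ be uniform on the unit sphere independently of $R$, take $\La=\Zd$, and let $(\p_x)_{x\in\Zd}$ be i.i.d.\ copies of $R\Theta$, independent of $U$, with $\PSL=\{x+U+\p_x:x\in\Zd\}$. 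Since $\PSL$ has intensity $1$ it is a.s.\ locally finite (so this construction is legitimate even though $\E[|\p_0|^\d]=+\infty$, which is why Theorem~\ref{theo:main} does not apply and finiteness of the variance must be checked by hand). Writing $\mu$ for the law of $\p_0$ and $q_x=q_x(U):=\P(x+U+\p_0\in\BR\mid U)=\mu\bigl(\B_r(-(x+U))\bigr)$, conditioning on $U$ makes the indicators $\I[x+U+\p_x\in\BR]$ independent Bernoulli variables, giving the exact identity
\begin{equation}
\label{eq:vardecomp-plan}
\Var\bigl[|\PSL\cap\BR|\bigr]\;=\;\E_U\Bigl[\sum_{x\in\Zd}q_x(1-q_x)\Bigr]\;+\;\Var_U\Bigl[\sum_{x\in\Zd}q_x\Bigr].
\end{equation}
The second term is controlled by noting that $\sum_x q_x(U)=\E_{\p_0}\bigl[\,|(\Zd+U+\p_0)\cap\BR|\,\bigr]$; since $U+\p_0$ is again uniform modulo $\Zd$ and $v\mapsto|(\Zd+v)\cap\BR|$ is $\Zd$‑periodic, conditioning can only decrease variance, so this term is at most the number variance of the stationary lattice $\Zd+U$, namely $\O(r^{\d-1})=o(r^\d)$ by the class‑I hyperuniformity recalled in the introduction. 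Hence only the first term of \eqref{eq:vardecomp-plan} matters at scale $r^\d$.

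For the lower bound in \eqref{eq:sigmatsigma} I would keep in the first term of \eqref{eq:vardecomp-plan} only the indices $x$ with $y:=x+U\in\B_{r/2}(0)$. For such $x$ one has the inclusions $\B_{r/2}(0)\subseteq\B_r(-y)\subseteq\{\,|t|<2r\,\}$, hence $q_x=\mu(\B_r(-y))\ge\P(|\p_0|\le r/2)\ge1-\bar F(r/2)\ge\tfrac12$ and $1-q_x\ge\P(|\p_0|\ge2r)=\bar F(2r)$, as soon as $r$ is large enough that $\bar F(r/2)\le\tfrac12$; therefore $q_x(1-q_x)\ge\tfrac12\bar F(2r)$. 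Since $\B_{r/2}(0)$ contains at least $c_\d r^\d$ points of $\Zd+U$ uniformly in $U$, the first term of \eqref{eq:vardecomp-plan} is $\ge c'_\d\,r^\d\,\bar F(2r)$, so $\sigma(r)=\Var[|\PSL\cap\BR|]/|\BR|\ge c''_\d\,\bar F(2r)\ge c''_\d\,\tsigma(r)$ for all large $r$. The finitely‑long remaining range $r\in[1,r_0]$ is absorbed into the constant $c$ using that $\sigma$ is positive and continuous there while $\tsigma$ is bounded; this yields \eqref{eq:sigmatsigma}.

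It then remains to check hyperuniformity, i.e.\ that $\E_U\bigl[\sum_x q_x(1-q_x)\bigr]=o(r^\d)$, and I would split the sum according to whether $|x+U|\le2r$ or not. For $|x+U|\le2r$ one uses $q_x(1-q_x)\le\bar F\bigl(\dist(x+U,\partial\BR)\bigr)$ (bounding $1-q_x\le\bar F(\dist)$ when $x+U\in\BR$ and $q_x\le\bar F(\dist)$ when $x+U\notin\BR$); since each unit‑width shell around $\partial\BR$ carries $\O(r^{\d-1})$ lattice points this part is $\O(r^{\d-1})\sum_{D=0}^{r}\bar F(D)=o(r^\d)$, because $\bar F(D)\to0$ forces $\tfrac1r\sum_{D\le r}\bar F(D)\to0$. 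For $|x+U|>2r$ one writes
\begin{equation}
\label{eq:farfield-plan}
\sum_{x:\,|x+U|>2r}q_x\;=\;\int\#\bigl\{\,z\in\Zd-U:\ |z|>2r,\ z\in\B_r(t)\,\bigr\}\,\mu(\dd t),
\end{equation}
and observes that the inner count vanishes unless $|t|>r$ and is always $\O(r^\d)$, so \eqref{eq:farfield-plan} is $\O\bigl(r^\d\,\mu(\{|t|>r\})\bigr)=\O\bigl(r^\d\bar F(r)\bigr)=o(r^\d)$; combined with the previous bound and with the $\O(r^{\d-1})$ from \eqref{eq:vardecomp-plan} this gives $\Var[|\PSL\cap\BR|]=o(r^\d)$. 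I expect the estimate \eqref{eq:farfield-plan} to be the only genuinely delicate step: because $\bar F$ was merely assumed to vanish at infinity, the naive bound $q_x\le\bar F(\dist(x+U,\BR))$ would make $\sum_x q_x$ diverge, and the point is to exploit that each relevant event asks $\p_0$ to land in the \emph{fixed‑radius} ball $\B_r(-(x+U))$ — equivalently, that a given $t$ lies in only $\O(r^\d)$ of these balls. The construction of $R$ from $\tsigma$ and the routine uniform lattice‑point counts are the only other ingredients.
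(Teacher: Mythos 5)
Your proof is correct, and it takes a genuinely different route from the paper's. The paper constructs $\PSL$ as a mixture over scales $N$ of ``block'' perturbations that collapse each cube of side $N$ onto a uniformly chosen point of that cube, with mixture weights tied to the increments $\tsigma(N+1)-\tsigma(N)$; the lower bound comes from the collapse event sending $\asymp N^\d$ points into $\BR$, and hyperuniformity is obtained either from the main positive result (for $\d\le 2$, since the mixed perturbation has finite $\d$-th moment) or from a direct block-by-block variance estimate (for $\d\ge 3$). You instead use a single i.i.d.\ isotropic perturbation whose radial tail is prescribed by $\tsigma$, and exploit that, conditionally on $U$, the indicators $\I[x+U+\p_x\in\BR]$ are independent Bernoullis: the conditional-variance term yields the lower bound $c\,r^\d\,\P(|\p_0|\ge 2r)\ge c\,r^\d\min(1,\tsigma(r))$ from the points of $\B_{r/2}$ that may jump out, while $\Var_U[\sum_x q_x]$ is dominated, via conditional Jensen, by the number variance of the stationary lattice, and your boundary-layer Ces\`aro estimate plus the far-field count (which you rightly identify as the one place where naive tail bounds diverge, and which you handle correctly by the $\O(r^\d)$-balls-per-point observation) give $o(r^\d)$. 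Note that the classical i.i.d.\ results quoted in the introduction do not apply here, since your $\p_0$ need not have any finite positive moment, so your self-contained hyperuniformity proof is genuinely needed --- and it is sound. What each approach buys: yours is independent of Theorem 1, avoids the dimension split, shows that even \emph{independent} perturbations can produce arbitrarily slow decay, and gives the bound directly at $\tsigma(r)$ rather than at $\tsigma(Mr)$; the paper's blocks are shared with its other counterexamples, so its mixture comes essentially for free there. The only points you leave implicit are routine: the explicit construction of $R$ from $\tsigma$, the uniform $\O(r^{\d-1})$ shell counts, and, on the compact range $r\in[1,r_0]$, the positivity (and continuity) of $\sigma$ --- positivity does hold because $R$ necessarily has unbounded support, so some $q_x(U)\in(0,1)$, but it deserves a line, or can be sidestepped by additionally imposing $\P(R=0)\ge\tfrac12$ so that the main lower bound already covers all but a smaller compact range of $r$.
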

We emphasize that $\tsigma$ can really be chosen tending to $0$ as slow as one wants.

\subsection{About hyperuniformity}
\label{sec:HU}

\subsubsection*{General facts}
In the statistical physics literature of the 1980's (see e.g. \cite{MY80,L83}) it has been observed that some classical systems of particles with long-range interactions exhibit \emph{cancellation of charge fluctuations}, namely that the variance of the number of charges in a domain increases proportionally to the surface area of the domain's boundary, and not to the volume. The notion of \emph{hyperuniformity}, or equivalently of \emph{super-homogeneous} distributions of points was introduced in \cite{gabrielli2002glass,torquato2003local}, with applications in theoretical chemistry and physics. We refer to the survey by S. Torquato \cite{torquato2018hyperuniform} for an overview of the considerable literature that has since been devoted to this topic.

One motivation to study hyperuniform processes is that they can exhibit a form of “order within disorder”: the fact that their rescaled number variance goes to $0$ can be interpreted as a form of collective order yet their truncated correlation functions may all decay very fast at infinity hence there is no “long-range order” in the usual sense of statistical physics. It is worth mentioning that since the “number of points in big boxes” is a very natural observable to study, some “hyperuniformity” results can predate the coining of the term itself by decades. 

Amusingly, the original definition for hyperuniformity was more stringent than \eqref{def:HU} and corresponds to what is now called “class-I hyperuniformity” (see \eqref{typeI}), as it has been realized that many interesting processes satisfy \eqref{def:HU} but not \eqref{typeI}. The defining condition \eqref{def:HU} should be understood by comparison to the fundamental example of a random point configuration given by the Poisson point process. Recall that for a Poisson point process $\bX$ (with constant intensity $c$), the number of points $|\bX \cap \BR|$ follows a Poisson distribution of parameter $c|\BR|$ for all $r > 0$, and as a straightforward consequence the number variance is precisely proportional to the volume $|\BR|$. For a random point configuration $\bX$ to be hyperuniform thus means that its number variance (in large balls) is asymptotically “much smaller than for a Poisson point process”.

\subsubsection*{Hyperuniformity and structure factor}
Assume that the (stationary) point process $\bX$ {has intensity $1$ (i.e.\ one point per unit volume on average) and} is nice enough so that the following quantity:
\begin{equation*}
g_2(x) := \lim_{\epsilon \to 0} \frac{1}{|\B_\epsilon|^2} \P[\bX \text{ has at least one point in } \B_\epsilon(0) \text{ and in } \B_\epsilon(x)]
\end{equation*}
exists for all $x \in \Rd$. The map $x \mapsto g_2(x)$ is usually called the \emph{pair correlation function} of $\bX$. The \emph{structure factor} $S$ is then defined as: $S := 1 + \widehat{(g_2 - 1)}$, where $\widehat{\cdot}$ represents the Fourier transform (see Section \ref{Sec:Notations} for notation and conventions).

Heuristically speaking, there is a correspondence between the small wave-number behavior of the structure factor (i.e.\ $S(k)$ for $|k|$ small) and the growth of number variance in large balls (see for instance \cite[Sec. 2]{torquato2018hyperuniform}), {which can be summarized as follows}:
\begin{itemize}
   \item $S(0) = 0$ $\longleftrightarrow$ $\Var\left[|\bX \cap \BR|\right] = o(r^\d)$ ($\bX$ is hyperuniform). This is sometimes phrased as the condition “$\lim_{k \to 0} S(k) = 0$”, or as the \emph{sum rule} $\int_{\Rd} (g_2 - 1) = -1$.
   \item $S(k) \sim |k|^{\alpha}$ with $\alpha > 1$ $\longleftrightarrow$ $\Var\left[|\bX \cap \BR|\right] = \O(r^{\d-1})$ ($\bX$ is class-I hyperuniform)
   \item $S(k) \sim |k|^{\alpha}$ with $\alpha \in (0, 1)$ $\longleftrightarrow$ $\Var\left[|\bX \cap \BR|\right] = \O(r^{\d-\alpha})$ ($\bX$ is hyperuniform, but not Class-I)
\end{itemize}
On a mathematical level, there are several aspects to care about:
\begin{enumerate}
   \item The two-point correlation function is in general not a proper function and has to be defined as a measure (we recall this in Section \ref{sec:pointprocesses} below). The measure $g_2 - \dd x$ might have infinite mass, so its Fourier transform (and thus the structure factor itself) has to be defined in a distributional sense.
   \item The value of $S$ at $0$ might not be well-defined, or $S$ might not be continuous near $0$. As a matter of fact, it can happen that the statement $\lim_{|k| \to 0} S(k) = 0$ is \emph{false} (because the limit does not exist) and yet the point process \emph{is hyperuniform}. This is not a big issue in itself - in actual computations related to hyperuniformity, the structure factor always gets integrated against some “nice” functions.
\end{enumerate}
The mathematical connection between the \emph{geometric} definition \eqref{def:HU} of hyperuniformity and its \emph{spectral} characterization was first fully established in \cite{bjorklund2024hyperuniformity}. In particular, it is proven that: 
\begin{proposition}[\cite{bjorklund2024hyperuniformity}, Thm. 3.6 \& 3.7]
\label{prop:BH}
Let $S$ be the structure factor of stationary point process.
\begin{enumerate}
   \item If $S(\B_{\epsilon}) = o(\epsilon^\d)$ as $\epsilon \to 0$ (“spectral hyperuniformity”), then $\sigma(r) = o(r^\d)$ (“geometric hyperuniformity”).
   \item If $S(\B_{\epsilon}) = \O\left(\epsilon^{\d+\gamma}\right)$ then $\sigma(r) = \O\left(r^{\d - \gamma}\right)$.
\end{enumerate}
\end{proposition}

\subsection{Connection with the literature}
\subsubsection*{Positive result}
Our main positive result in Theorem \ref{theo:main} is the fact that in dimension $2$, perturbations with finite second moment preserve the hyperuniformity of lattices. To the best of our knowledge, this was not proven in the mathematical literature \emph{even for bounded perturbations}. In the physics literature, this is discussed in several places including \cite[Sec. VI.C]{gabrielli2002glass}, \cite{gabrielli2004point}, \cite[Sec. IV. A]{kim2018effect} - all of which do state that hyperuniformity of a two-dimensional lattice cannot be broken if the perturbations have finite variance. Their argument relies on computing a small wave-number approximation for the structure factor of $\PSL$, in which the covariance structure of the perturbations plays an important role. This is detailed in \cite{gabrielli2004point}, which includes the case of Gaussian perturbations ({note that the paper also considers the perturbations of general hyperuniform processes, not only stationary lattices}).

%  Two difficulties arise when trying to make this argument mathematically rigorous:
% \begin{itemize}
%    \item A minor issue is that the “structure factor” for lattices or perturbed lattices is not well-defined as a function and has to be understood instead in some distributional sense {as mentioned above}. 
%    \item Justifying the small wave-number expansion is fairly straightforward, but showing that the contribution of “non small” wave-numbers is indeed negligible requires a significant effort. 
% \end{itemize}

\subsubsection*{Examples of pathological processes}
Our examples (see again Theorem \ref{theo:main}) of non-hyperuniform perturbed lattices in dimension $\d = 2$ (with perturbations failing to have a finite second moment) and $\d \geq 3$ (with arbitrarily small perturbations) seem to be the first mathematical constructions of such objects. Their existence was hinted on in the physics papers cited above (e.g. in \cite{kim2018effect} \textit{“In contrast to uncorrelated perturbed lattices that are always hyperuniform, correlated ones can be non-hyperuniform.”}), but no explicit construction was given - rather, based on the small wave-number expansion for the structure factor mentioned above, they argue that if the spectral density of the perturbation field has a certain power-law near $0$, then hyperuniformity will be broken. In dimension $2$, this does not suggest how to proceed (because in that case, the perturbations must have infinite variance, hence no properly defined spectral density). In higher dimensions, this suggests that certain Gaussian perturbation fields with well-chosen spectral densities can do the job. We produce counter-examples with perturbations that are \emph{bounded} and not “only” with finite moments of all order like Gaussian variables. 

Our constructions are elementary and allow us to prove lower bounds on the number variance directly, without relying on the structure factor.  

\subsubsection*{Slow decay of the rescaled number variance}
In Theorem \ref{theo:slow}, we give what seems to be the first examples of hyperuniform processes in any dimension with truly arbitrarily slow decay of the rescaled number variance (although this might have been “folklore” knowledge to some experts, the only general result that we are aware of is the fact that in dimension $1$, any power law decay can be realized by a well-chosen determinantal point process, see \cite[Sec. 3]{soshnikov2000determinantal}). For his phenomenological classification, Torquato \cite{torquato2018hyperuniform} considers three asymptotic rates: class-I ($\sigma(r) \sim c r^{-1}$), class-II ($\sigma(r) \sim c r^{-1} \log r$) and class-III ($\sigma(r) \sim c r^{-\alpha}$ for some $\alpha \in (0,1)$), which does \emph{not} include cases like $\sigma(r) \geq c \log^{-0.01} r$. We thus show that this classification does not quite cover the entire spectrum of possible behaviors. It is however fair to say that our examples are ad hoc constructions with little “physical” reality.

\subsection{An optimal transport perspective}
In this section, we reformulate our main result from the point of view of optimal transport between stationary point processes recently introduced in \cite{ErHuJaMu23} {(using their notation)}. 

Let us recall the relevant notions. Denote the set of locally finite point configurations by $\Gamma$. For two distributions of stationary point processes $\mathsf{P}_0$ and $\mathsf{P}_1$ and a cost function $c:\Gamma\times\Gamma\to\R$ we are interested in the transport problem
\begin{equation}
\label{eq:generalTransport}
\inf_{Q\in \mathsf{Cpl}_s(\mathsf{P}_0,\mathsf{P}_1)} \int c \d Q
\end{equation}
where $\mathsf{Cpl}_s(\mathsf{P}_0,\mathsf{P}_1)$ is the set of all couplings between $\mathsf{P}_0$ and $\mathsf{P}_1$ that are invariant under the diagonal action of $\R^d$ on $\Gamma\times\Gamma$ given by $\theta_x(\xi,\eta)(\cdot,\cdot)=(\xi,\eta)(\cdot -x,\cdot -x)$ {(for all $x \in \R^d$)}. 

Note that in \eqref{eq:generalTransport}, $c$ is a cost function \emph{between point configurations}. A particularly relevant choice of $c$ is the following: for $p\geq 1$ and for any two point configurations $\xi, \eta$ on $\Rd$, we put:
\begin{equation*}
c_p(\xi,\eta) :=\inf_{q\in\mathsf{cpl}(\xi,\eta)}\limsup_{n\to\infty}\frac{1}{n^d} \int_{\Lambda_n\times\R^d}|x-y|^p q(dx,dy),
\end{equation*}
where $\mathsf{cpl}(\xi,\eta)$ denotes the set of all couplings between the point configurations $\xi$ and $\eta$, seen as measures on $\Rd$ and $\Lambda_n$ is the box $[-n/2,n/2]^d$. The quantity $c_p(\xi, \eta)$ can be thought of as a “$p$-Wasserstein distance per unit volume” between the measures $\xi$ and $\eta$. 

Then, if $\mathsf{P}_0,\mathsf{P}_1$ are the distributions of two random point configurations, we define their “$p$-Wasserstein distance” $\Wass_p$ by: 
\begin{equation*}
\Wass_p^p(\mathsf{P}_0,\mathsf{P}_1) :=\inf_{Q\in \mathsf{Cpl}_s(\mathsf{P}_0,\mathsf{P}_1)} \int c_p \d Q.
\end{equation*}
This defines a geodesic metric on the space of stationary point process with unit intensity.

{In order to connect this definition of a Wasserstein distance between point processes with the result of the present paper, it is relevant to mention that} there is a representation of $\Wass_p$ in terms of equivariant random matchings. Let $(\xi^\bullet,\eta^\bullet)$ be {two random point configurations whose distribution is} jointly invariant {under the shifts $\theta_x$ (for $x \in \R^d$)}, denote their marginal distributions by $\mathsf{P}_0$, $\mathsf{P}_1$. {Assume that $(\xi^\bullet,\eta^\bullet)$ are constructed} on some probability space $(\Omega, \mathcal F,\mathbb P)$ equipped with a measurable flow $\theta:\Omega\to\Omega$ satisfying $\theta_0=id, \theta_x\circ\theta_y=\theta_{x+y}.$
Denote by $\mathsf{cpl}_e(\xi^\bullet,\eta^\bullet)$ the set of all equivariant couplings of $\xi^\bullet$ and $\eta^\bullet$, i.e.\ for each $\omega\in\Omega$ the measure $q^\omega$ is a coupling of $\xi^\omega$ and $\eta^\omega$ and $q^{\theta_x\omega}(A,B)=q^\omega(A-x,B-x)$. Then, we have  \cite[Prop. 1.2]{ErHuJaMu23}
 \begin{equation}\label{eq:equicost}
 \Wass_p^p(\mathsf{P}_0,\mathsf{P}_1)=\inf_{(\xi^\bullet,\eta^\bullet)}\inf_{q^\bullet\in\mathsf{cpl}_e(\xi^\bullet,\eta^\bullet)} \E\left[\int_{[0,1]^\d \times\R^d}|x-y|^p q(dx,dy)\right],
 \end{equation}
where the first infimum runs over all probability spaces supporting jointly invariant random measures $\xi^\bullet\sim\mathsf{P}_0,\eta^\bullet\sim\mathsf{P}_1.$ Furthermore, the infimum is attained by an invariant matching $\mathrm{m}^\bullet$, i.e.\ a coupling of the form $(id,T^\bullet)_\#\xi^\bullet$, {where $S_\#\mu$ denotes the push-forward of the measure $\mu$ by the measurable map $S$.}

Now comes the important point: Note that $T^\bullet(x)-x$ is a stationary displacement field and any stationary displacement field induces a stationary matching. Moreover, if $\xi=\sum_{x\in\Zd}\delta_x$ the cost of a matching $m^\bullet=(id,T^\bullet)_\#\xi^\bullet$ is given by $\E[|T(0)-0|^p]$, the $L^p$ norm of the displacement field. 

Let us denote here by $\mathsf{P}^{\Zd}$ the stationary version of the integer lattice $\Zd$, i.e. $\mathsf{P}^{\Zd} := \mathrm{Law}(\sum_{x\in\Zd}\delta_{x+U})$, with a random variable $U$ uniformly distributed in $[0,1]^\d$. It is equivalent to find:
\begin{itemize}
   \item A $c_p$-optimal equivariant matching between $\mathsf{P}^{\Zd}$ and $\mathsf{P}$.
   \item A stationary displacement field  $(\p_x)_{x\in\Zd}$ with minimal $L^p$ norm $\E[|\p_0|^{{p}} ]$ such that the law of the resulting perturbed lattice $\PSL$ coincides with $\mathsf{P}$.
\end{itemize}

With this language, the main results of Theorem \ref{theo:main} and Theorem \ref{theo:classI} can be rephrased as follows:

\begin{theorem*}
{Let $\mathsf{P}$ be the law of a stationary point process $\bX$ on $\Rd$. We have:
\begin{enumerate}
\item If $\d=1,2$ and if $\Wass_\d(\mathsf{P}^{\Zd},\mathsf{P})$ is finite, then $\mathsf{P}$ is hyperuniform.
\item If $\d = 1, 2$, for all $\epsilon > 0$, there are examples with $\Wass_{\d-\epsilon}(\mathsf{P}^{\Zd},\mathsf{P})$ finite and yet $\mathsf{P}$ is not hyperuniform (in fact the number variance in finite balls is not even finite).
\item If $\d \geq 3$, for all $\epsilon > 0$, there are examples with $\Wass_{\infty}(\mathsf{P}^{\Zd},\mathsf{P}) \leq \epsilon$ and yet $\mathsf{P}$ is not hyperuniform.
\item If $\d=1$ and if $\Wass_2(\mathsf{P}^{\Z},\mathsf{P})$ is finite, then $\mathsf{P}$ is class-I hyperuniform. If $\d = 2$, for all $\epsilon > 0$, there are examples with $\Wass_{\infty}(\mathsf{P}^{\Z^2},\mathsf{P}) \leq \epsilon$ and yet $\mathsf{P}$ is not class-I hyperuniform.
\end{enumerate}}
\end{theorem*}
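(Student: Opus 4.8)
\medskip
\noindent\textbf{Proof strategy for the optimal transport reformulation.}
This last theorem is a translation of Theorems~\ref{theo:main} and~\ref{theo:classI} through the dictionary set up above, so the plan is first to pin down the precise correspondence between Wasserstein distances to the stationary lattice and perturbed lattices with a prescribed moment of the displacement, and then to feed the already-proven results through it. The statement I would establish is the following equivalence: for $p\in[1,\infty)$, $\Wass_p(\mathsf{P}^{\Zd},\mathsf{P})$ is finite if and only if $\mathsf{P}=\mathrm{Law}(\PSL)$ for some displacement field $(\p_x)_{x\in\Zd}$ satisfying the paper's standing $\Zd$-invariance hypothesis and with $\E[|\p_0|^p]<\infty$; moreover $\Wass_p^p(\mathsf{P}^{\Zd},\mathsf{P})$ equals the infimum of $\E[|\p_0|^p]$ over all such representations, and the analogous statement holds for $p=\infty$ with the essential supremum of $|\p_0|$ in place of $\E[|\p_0|^p]$.

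For the easy (``if'') direction I would, given $(\p_x)$, build on a probability space carrying a flow the jointly stationary pair $\xi^\bullet=\sum_{x\in\Zd}\delta_{x+U}$ and $\eta^\bullet=\sum_{x\in\Zd}\delta_{x+U+\p_x}$ (joint stationarity is exactly the statement that $\PSL$ is stationary, which holds under the $\Zd$-invariance of $(\p_x)$ and the uniform law of $U$), take the deterministic equivariant matching $x+U\mapsto x+U+\p_x$, and compute its cost in \eqref{eq:equicost} as $\E[\sum_{x\in\Zd}\mathbf 1_{x+U\in[0,1]^\d}\,|\p_x|^p]=\E[|\p_0|^p]$, since exactly the index $x=0$ contributes; hence $\Wass_p^p(\mathsf{P}^{\Zd},\mathsf{P})\le\E[|\p_0|^p]$. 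For the ``only if'' direction, if $\Wass_p(\mathsf{P}^{\Zd},\mathsf{P})<\infty$ then \eqref{eq:equicost} together with its attainment clause from \cite[Prop.~1.2]{ErHuJaMu23} produces an invariant matching $\mathrm m^\bullet=(\mathrm{id},T^\bullet)_\#\xi^\bullet$ off $\xi^\bullet\sim\mathsf{P}^{\Zd}$ of finite cost; restricting $T^\bullet$ to the atoms of $\xi^\bullet$ and re-indexing by $\Zd$ via $U$ yields a displacement field $(\p_x)_{x\in\Zd}$ whose joint law is $\Zd$-invariant (inherited from the $\R^\d$-stationarity of the coupling), which realizes $\mathsf{P}$ and satisfies $\E[|\p_0|^p]=\Wass_p^p(\mathsf{P}^{\Zd},\mathsf{P})<\infty$. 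The case $p=\infty$ is identical with $L^\infty$ norms throughout.

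With this equivalence in hand the four items follow directly. For item~1 and the first half of item~4: for $\d\in\{1,2\}$ (resp.\ $\d=1$) a finite $\Wass_\d$ (resp.\ $\Wass_2$) distance yields a representation with $\E[|\p_0|^\d]<\infty$ (resp.\ $\E[|\p_0|^2]<\infty$), so hyperuniformity (resp.\ class-I hyperuniformity) follows from the second item of Theorem~\ref{theo:main} (resp.\ the first item of Theorem~\ref{theo:classI}). For item~2: fix $\epsilon>0$ and pick a positive random variable $X$ with $\E[X^{\d-\epsilon}]<\infty$ but $\E[X^\d]=+\infty$ (e.g.\ with a power-law tail $\P[X>t]\asymp t^{-(\d-\epsilon/2)}$), apply the third item of Theorem~\ref{theo:main} to obtain perturbations with $|\p_0|\le X$ a.s.\ and with infinite number variance in finite balls; then $\E[|\p_0|^{\d-\epsilon}]\le\E[X^{\d-\epsilon}]<\infty$, so the easy direction gives $\Wass_{\d-\epsilon}(\mathsf{P}^{\Zd},\mathsf{P})<\infty$. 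For item~3 and the second half of item~4: the fourth item of Theorem~\ref{theo:main} (resp.\ the second item of Theorem~\ref{theo:classI}) supplies perturbations bounded a.s.\ by $\epsilon$ whose perturbed lattice fails hyperuniformity (resp.\ class-I hyperuniformity), and the $L^\infty$ version of the easy direction gives $\Wass_\infty(\mathsf{P}^{\Zd},\mathsf{P})\le\epsilon$ (resp.\ $\Wass_\infty(\mathsf{P}^{\Z^2},\mathsf{P})\le\epsilon$).

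The main obstacle is the ``only if'' direction of the equivalence: one must know that a finite $\Wass_p$-cost to the stationary lattice is witnessed by an honest \emph{matching} rather than a mass-splitting coupling, and that this matching descends to a genuinely $\Zd$-indexed displacement field whose joint law is $\Zd$-invariant. Both facts are already packaged in \eqref{eq:equicost} --- its attainment-by-invariant-matching clause and the correspondence between invariant matchings off $\mathsf{P}^{\Zd}$ and stationary displacement fields recorded above --- so once those are invoked cleanly the remainder is bookkeeping; everything else in the proof is a routine unwinding of definitions.
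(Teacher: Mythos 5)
Your proposal is correct and follows essentially the same route as the paper: the paper treats this theorem as a direct rephrasing of Theorems \ref{theo:main} and \ref{theo:classI} via the dictionary of Section 1.4, namely that (by \cite[Prop.~1.2]{ErHuJaMu23} and the correspondence between equivariant matchings off $\mathsf{P}^{\Zd}$ and $\Zd$-invariant displacement fields) finiteness of $\Wass_p(\mathsf{P}^{\Zd},\mathsf{P})$ is equivalent to representing $\mathsf{P}$ as a perturbed lattice with $\E[|\p_0|^p]<\infty$ (resp.\ $\|\p_0\|_\infty$ for $p=\infty$), which is exactly the equivalence you prove before feeding in the earlier results. Your write-up just makes explicit the two directions of this dictionary (including the Palm-type extraction of the displacement field from an optimal invariant matching) that the paper states without detailed proof.
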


Since $\Wass_p$ is a geodesic metric, it follows that in dimensions $\d=1,2$ the set of point processes with finite $\Wass_p$ distance to $\mathsf{P}^{\Zd}$ is geodesically closed, and thus the geodesic interpolation between two hyperuniform point processes with finite $\Wass_\d$ distance to $\mathsf{P}^{\Zd}$ stays hyperuniform (resp. class-I hyperuniform for $\d=1$ and $\Wass_2$).

\begin{remark}\label{rem:lattice finite distance}
One can more generally define the Wasserstein distance between Radon measures (not necessarily purely atomic ones). In that sense, a stationary lattice is always at finite $\Wass_p$ distance of the Lebesgue measure (seen as a deterministic Radon measure) for all $p$ (in fact for $p = + \infty$). It is thus equivalent to be “at finite distance of a stationary lattice” or “at finite distance of Lebesgue”.
\end{remark}

\subsection{Open questions and connection with recent results}
\label{sec:open}

\subsubsection*{Perturbations of general hyperuniform processes}
Proving or disproving hyperuniformity of stationary perturbations of a general hyperuniform process (not only a stationary lattice) remains open. In fact, as we discuss below in Section \ref{sec:PertLatt}, even the correct notion of “perturbations” is not completely clear in general: for example, do we allow perturbations to depend on the realization of the underlying process or not? In \cite{gabrielli2004point}, the case of perturbations that are \emph{independent} from the process is considered.

For perturbations that can be correlated with the original point process, one can make the following observation: if $\bX$ is hyperuniform \emph{and has finite $2$-Wasserstein distance to a lattice}, then any point process $\bX'$ with finite $2$-Wasserstein distance to $\bX$ is hyperuniform again, by the triangle inequality and our main result. In other words, any $L^2$ perturbation of $\bX$ (independent of $\bX$ or not) will yield an hyperuniform process. Note that this does not completely answer the question because, as observed in \cite{HuesLeb}, not every hyperuniform process has finite $2$-Wasserstein distance to a lattice.

\subsubsection*{Converse statements in dimension $2$: the results of \cite{LRY,BDG,HuesLeb}}
Using the language introduced just above, our main result in the present paper is that in dimension~$2$, finite $2$-Wasserstein distance to a (stationary) lattice implies hyperuniformity. {Several recent papers are dealing (among other things) with the converse statement.}

In \cite{LRY}, the converse statement (hyperuniformity $\implies$ finite $2$-Wasserstein distance) was shown to be true \emph{under a certain integrability condition on the reduced pair correlation function} of the process which, they argue, proves that “\emph{most hyperuniform planar point processes are $L^2$-perturbed lattices}”.

{In \cite{BDG}, finite $2$-Wasserstein distance is obtained under a quantitative assumption on the decay of the rescaled number variance which is just slightly stronger than hyperuniformity itself.}

In a companion \cite{HuesLeb} to the present paper, the integrability condition of \cite{LRY} is observed to be closely related to some kind of “finiteness of the Coulomb energy” and it is shown that indeed:
\begin{equation*}
\text{Finite Coulomb energy } \implies \text{ Finite $2$-Wasserstein distance to a lattice } \implies \text{ Hyperuniformity},
\end{equation*}
(the second implication being the one from the present paper). However, simple examples are given in \cite{HuesLeb} of stationary point processes which are hyperuniform, but not at finite $2$-Wasserstein distance to a lattice - thus there is no perfect equivalence between those two notions. It is also proven that under a condition which is slightly stronger than hyperuniformity but slightly weaker than the one of \cite{BDG}, one obtains finite $2$-Wasserstein distance to a lattice. Since this condition covers by far the vast majority of known examples, it completely agrees with the statement of \cite{LRY} cited above. 

{The decay condition of \cite{HuesLeb}, called “$\star$-hyperuniformity”, is shown there to be equivalent to finite Coulomb energy. In the present paper, as a corollary of the proof of Theorem \ref{theo:main}, we obtain quantitative bounds on the decay of the rescaled number variance. In particular, for all $\epsilon>0$ (see Corollary \ref{cor:starHU}):
\begin{equation*}
\text{ Finite $2+\epsilon$-Wasserstein distance to a lattice } \implies \star-\text{hyperuniformity} \iff \text{ Finite Coulomb energy}.
\end{equation*}
}

\paragraph{Sufficient condition for class-I hyperuniformity}
In dimension $2$ we could not produce examples of perturbed lattices which are a) hyperuniform but not class-I hyperuniform \emph{and} b) such that the covariance structure of the perturbations satisfy a summability condition of the type: 
\begin{equation}
\label{covariancesummable}
\sum_{x \in \La} \left| \Cov(\p_x, \p_0) \right| < + \infty. 
\end{equation}
It is well-known that \eqref{covariancesummable} implies that the spectral density of the perturbations not only exists \emph{but is continuous}, which eliminates certain “bad” behaviors, {and could be enough to ensure that the resulting perturbed lattice is class-I hyperuniform.}

To see why, recall that the basic plan to break hyperuniformity (see e.g. \cite[eq. (34)]{kim2018effect}) is to choose perturbations $(\p_x)_{x \in \Zd}$ whose spectral density $\rho$ satisfies $\rho(\omega) \sim C |\omega|^{-2}$ as $\omega \to 0$. Indeed, when doing the computations (see e.g. \eqref{def:Cr} below), the behavior near $0$ of the spectral density of the perturbations turns out to play a major role for the asymptotic decay of the rescaled number variance, and $\rho(\omega) \sim C |\omega|^{-2}$ is in some sense the “critical” case separating hyperuniform and non-hyperuniform behaviors. Having (for the perturbations) $\rho(\omega) \sim C |\omega|^{-2+\delta}$ with  $\delta > 0$ produces a perturbed lattice which is hyperuniform, but for $\delta$ small it ought to be close to the “non-hyperuniform” case and in particular “not very hyperuniform” (type-I).

In dimension $2$, it is not possible to have $\rho(\omega) \sim C |\omega|^{-2}$ near $0$ because $\rho$ would fail to be integrable, but one can certainly produce Gaussian perturbation fields with $\rho(\omega) \sim C |\omega|^{-2+\delta}$ for any $\delta > 0$, and the resulting perturbed lattice will presumably \emph{not} be class-I hyperuniform if $\delta \leq 1$.  {However, if the spectral density $\rho$ is known to be continuous near $0$, such divergences are of course forbidden (one needs $\delta \geq 2$).} We are thus left with the following question: in dimension $2$, if $\E[|\p_0|^2] < + \infty$ and \eqref{covariancesummable} holds, is $\PSL$ necessarily class-I hyperuniform?

\subsection{Plan of the paper}
In Section \ref{sec:Prelim}, we introduce our notation, and our general setting. 

Section \ref{sec:d2p>2} is devoted to the proof of the main result of the paper: in dimension $\d=2$, a finite second moment of perturbations ensures hyperuniformity. 

The analogous positive result in dimension $\d=1$ is simpler to prove and is provided in Appendix \ref{sec:dim1}, along with the proof of the first item of Theorem \ref{theo:classI}. 

The proofs of other results, namely the first, third, and fourth items in Theorem \ref{theo:main}, the second item in Theorem \ref{theo:classI}, and Theorem \ref{theo:slow}, are of a different nature, involving more probabilistic constructions. These proofs are presented in Section \ref{sec:Counter}.

\section{Preliminaries}
\label{sec:Prelim}
\subsection{Some notation}
\label{Sec:Notations}
We write $A \preceq B$, or equivalently $A = \O(B)$, when there exists a constant $C$ depending only on the dimension $\d$, the lattice $\La$ and the law of the perturbations $(\p_x)_{x \in \La}$ such that $|A| \leq C |B|$. We write $f(x)=o(g(x))$ as $x\to a$ if $\lim_{x\to a}\frac{f(x)}{g(x)}=0$. If the value $a$ is clear from context, we do not mention it.

{In the proof of Proposition \ref{prop:L2D2}, which is our main result, we will work with a large parameter $r$ and auxiliary parameters depending on $r$. In that context, we sometimes write $A \ll B$ to signify that $\frac{A}{B} \to 0$ as $r \to \infty$ ($A$, $B$ being two quantities depending explicitly or implicitly on $r$).}

We use $|\cdot|$ to denote the cardinality of a finite set or the Lebesgue measure of a Borel set in $\Rd$. We write $\1_{\cdots}$ or $\1 \{\cdots\}$ for indicator functions.
We sometimes write $\left\langle \mu,f \right\rangle = \int f d\mu$ for the integral of a function $f$ against a (signed) measure $\mu$. We denote the convolution of two functions $f$ and $g$ by $f\ast g$.

\subsubsection*{Fourier transforms}
We choose the following convention for the Fourier transform: if $f$ is in $L^1(\Rd)$ we let:
\begin{equation*}
\hat{f}(s) := \int_{\Rd} e^{- i s \cdot t} f(t) \dd t.
\end{equation*}
We also define the characteristic function of a random variable $X$ as $\varphi_X(s):=\E[e^{-i s \cdot X}]$. With this convention we have $\hat{\hat{f}} = (2 \pi)^d f(- \cdot)$. We also have $\widehat{f \ast g} = \hat{f} \hat{g}$ but $\widehat{fg} = (2 \pi)^{-d} \hat{f} \ast \hat{g}$.

\subsection{Generalities on lattices}
\label{sec:Lattices}
\newcommand{\LaD}{\La^{\star}}

\subsubsection*{Some definitions}
We recall some definitions about lattices, which can be found in most textbooks.

The lattices considered here are subgroups of $\Rd$ generated by linear combinations \emph{with integer coefficients} of the vectors  of a certain basis of $\Rd$ (we only consider “full-rank” lattices). The basic example is the so-called integer lattice $\Zd$ (or “square lattice” for $\d =2$) which is simply the subgroup of all points in $\Rd$ with integer coefficients, or equivalently the set of all integer linear combinations of the vectors of the canonical basis of $\Rd$. Any other lattice can be obtained as an affine transformation of $\Zd$.
\begin{remark}
If we could show that any lattice $\La$ is itself a bounded stationary perturbation of $\Zd$, it would be enough to treat the case $\La = \Z^2$ for, say, the positive result of Theorem \ref{theo:main} in dimension $2$. This is indeed true, as they have both finite $\Wass_2$ distance to Lebesgue (see Remark \ref{rem:lattice finite distance}). However, we did not see an “easy” explicit perturbation that one could write down \emph{in a stationary fashion} to connect $\Zd$ in order to a general lattice, and the cost of working with general lattices is not too high, so we chose this option.
\end{remark}

To a lattice $\La$ and a basis $(\lambda_1, \dots, \lambda_\d)$, we associate a “fundamental domain” of $\La$ which is the set:
\begin{equation*}
\DD := \left\lbrace \sum_{i=1}^\dd t_i \lambda_i, \ (t_1, \dots, t_\d) \in [0,1)^\d \right\rbrace.
\end{equation*}
The fundamental domain is a compact parallelotope, whose volume does not depend on the choice of the basis and is called the “covolume” of $\La$.  \textit{In this paper, we will always assume the covolume to be $1$.} This is of course only for convenience and in no way a loss of generality. An interesting property is that every vector $t$ of $\Rd$ can be uniquely written as: $t = x + u$, where $x \in \La$ and $u \in \DD$. In other words, translating a fundamental domain by all the vectors in $\La$ makes a perfect tiling of $\Rd$. 

The dual lattice $\LaD$ of $\La$ is defined as: $\LaD := \left\lbrace k \in \Rd, \text{ such that } k \cdot x \in \Z \text{ for all } x \in \La \right\rbrace$. The only important fact for us is that $\LaD$ is indeed a lattice, and that it appears in Poisson's summation formula, which we recall next.

\subsubsection*{Poisson's summation formula}
We refer to \cite[Chap. VII]{stein1971introduction}. If $f : \Rd \to \mathbb{C}$ is in $L^1$ (so that $\hat{f}$ is well-defined) and if moreover there exists $\delta > 0$ and a constant $C > 0$ such that: $|f(x)| \leq \frac{C}{|x|^{\d + \delta}}, \quad |\hat{f}(x)| \leq \frac{C}{|x|^{\d + \delta}} \text{ as } |x| \to \infty$, then Poisson's summation formula holds, namely we have: 
\begin{equation}
\label{Poisson}
\sum_{x \in \La} f(x) = \sum_{k \in \LaD} \hat{f}(2\pi k).
\end{equation}
 Note that this obviously includes the case of $f$ in Schwartz's space (i.e.\ when $f$ is smooth and $f$ and all its derivatives decay faster than every polynomial at infinity).

\subsection{Generalities on point processes}
\label{sec:pointprocesses}
\newcommand{\RSFM}{\alpha_{\mathrm{red}}^{(2)}} % Reduced second factorial moment measure
\newcommand{\RSFC}{\gamma_{\mathrm{red}}^{(2)}} % Reduced second factorial cumulant measure
\newcommand{\jr}{j_r}
\newcommand{\hjr}{\widehat{\jr}}
\newcommand{\leb}{\lambda_d}
\newcommand{\BL}{\B_\ell}
\newcommand{\Bm}{\B_m}
\subsubsection*{Configuration of points}

{A deterministic configuration of points in $\Rd$ is defined as a locally finite integer-valued measure on $\Rd$, i.e. a measure of the form $\mathbf{x}=\sum_{i}\delta_{x_i}$ where $(x_i)_i$ is a finite or infinite sequence of points in $\Rd$ without accumulation points. Note that multiple points are allowed at the same location. The state space of all point configurations is endowed with the $\sigma$-field generated by the counting functions $\mathbf{x}\mapsto\mathbf{x}(B)$ for all Borel sets $B$ in $\Rd$. A random configuration of points, or “point process”, is a random variable on this state space. We refer to \cite[Sec. 2]{last2017lectures} for details about multiple points, labelling, etc.
}

\subsubsection*{Stationarity.}
If $\bX$ is a point configuration and $u$ is a vector in $\Rd$, we denote by $\bX + u$ the “shift by u” i.e.\ the effect of translating all the points of $\bX$ by $u$. A point process is said to be \emph{stationary} when its distribution is invariant with respect to all the translations i.e.\ when $\bX + u$ and $\bX$ have the same distribution for all $u \in \Rd$. 

When $\bX$ is stationary, we define its intensity as the average number of points of $\bX$ per unit volume. Here for convenience we always work with processes of intensity $1$.

\subsubsection*{Second order measures}
\label{sec:second_order}
When $\bX$ is a random point configuration, we say that $\bX$ is \emph{locally square integrable} if $\E[|\bX \cap \BR|^2]$ is finite for all $r > 0$. If $\bX$ is locally square integrable, we can define its \emph{reduced second factorial moment measure}, denoted by $\RSFM$. This measure plays a central role in the analysis of hyperuniformity for perturbed lattices because:
\begin{enumerate}
   \item The measure $\RSFM$ of a perturbed lattice has a tractable expression, see \eqref{alpha2PSL}.
   \item The number variance is conveniently expressed in terms of $\RSFM$, see \eqref{eq:NV}.
\end{enumerate}
We refer to \cite[Sections 4.4. \& 8.2]{last2017lectures} for details. 

\begin{definition}[Second factorial moment measure]
\label{def:MomentMeasure}
If $\bX$ is a point process, we define its \textit{second factorial moment measure} as the Borel measure $\alpha^{(2)}$ on $\R^\d \times \R^\d$ such that for all Borel sets $B_1, B_2$:
\begin{equation*}
\alpha^{(2)}(B_1 \times B_2):=\E\sum_{x_1,x_2 \in \bX}^{\neq} \I \left\lbrace x_1 \in B_1, x_2 \in B_2\right\rbrace.
\end{equation*}
If $\bX$ is locally square integrable, then $\alpha^{(2)}$ is well-defined, and if moreover $\bX$ is stationary, then we can define its \textit{reduced second factorial moment measure} $\alpha^{(2)}_{red}$ (sometimes known as \textit{pair correlation measure}), which is given, for $B$ a Borel set, by:
\begin{equation}
\label{eq_2ndRFMM}
\alpha_{red}^{(2)}(B)= \E \sum_{x,y \in \bX}^{\neq} \I\{x \in [0,1)^\d, y-x \in B\}.
\end{equation}
Note that in \eqref{eq_2ndRFMM} we can replace $[0,1)^\d$ by any Borel set of volume $1$ (see e.g. the proof \cite[Prop.~8.7]{last2017lectures}).
\end{definition}

\subsection{Basic properties of perturbed lattices}
\label{sec:PertLatt}
\subsubsection*{Discussion of the notion of “perturbations”}
Let $\La$ be a lattice and $\DD$ a fundamental domain of $\La$. Recall that we define the (unperturbed) stationary lattice as $\SL := \{x + U, x \in \La\}$ where $U$ is uniform in $\DD$. One could think of at least two ways to define “perturbations” of $\SL$:
\begin{enumerate}
   \item Let $\p$ be a random field $\La \mapsto \Rd$ - the value of $\p_x$ for $x \in \La$ corresponds to the “perturbation” applied to $x$.  Let $U$ be uniform in $\DD$ and independent of $\p$. We define the perturbed stationary lattice $\SL$ as the random collection of points given by $\PSL := \{ (x + \p(x)) + U, \ x \in \La \}$.
   \item Let $\bp$ be a random vector field on $\Rd$, thought of as a “displacement field” - a point placed at $x \in \Rd$ gets displaced by $\bp(x)$. Let $U$ be uniform in $\DD$ and independent of $\bp$. We define the perturbed stationary lattice $\SL$ as the random collection of points given by $\PSL' := \{ (x+U) + \bp(x+U), \ x \in \La \}$.
\end{enumerate}
It is straightforward to check that, when the perturbations themselves are stationary, those two notions coincide.
\begin{claim}
\label{claim:egalitenotions}
With $\PSL$ and $\PSL'$ as defined above:
\begin{enumerate}
   \item Assume that $\p$ is $\La$-invariant. For $t \in \Rd$, write $t = x + U$ with $x \in \La$ and $U$ in $\DD$, and set $\bp := \p(t- U)$. Then $\bp$ is a $\Rd$-invariant displacement field and $\PSL' = \PSL$.
   \item Assume that $\bp$ is $\Rd$-invariant and set $\p := \bp_{|\La}$. Then $\p$ is a $\La$-invariant perturbation field and $\PSL = \PSL'$. 
\end{enumerate}
\end{claim}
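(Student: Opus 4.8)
The plan is to give explicit dictionaries in both directions and, in each case, to check (a) the claimed invariance of the constructed object, and (b) that $\PSL$ and $\PSL'$ have the same law. The main input is the elementary principle that composing a shift-invariant random field with an \emph{independent} random shift leaves its law unchanged (and makes the result independent of the shift); everything else is bookkeeping with the decomposition $\Rd=\La+\DD$. Throughout, I would write $t=x(t)+u(t)$ for the unique decomposition of $t\in\Rd$ with $x(t)\in\La$ and $u(t)\in\DD$; this map is measurable, satisfies $x(t+x_\star)=x(t)+x_\star$ for $x_\star\in\La$, and $w\mapsto u(w)$ restricts to a Lebesgue-preserving bijection of any translate of $\DD$ onto $\DD$.

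\emph{Second item (from $\bp$ to $\p$): the easy direction.} If $\bp$ is $\Rd$-invariant and $\p:=\bp_{|\La}$, then restricting the $\Rd$-invariance to indices and shifts in $\La$ gives at once that $\p$ is $\La$-invariant. For the point processes, put $\tilde\bp:=\bp(\cdot+U)$; since $U$ is independent of $\bp$ and $\bp$ is $\Rd$-invariant, $\tilde\bp\stackrel{d}{=}\bp$ and $\tilde\bp$ is independent of $U$. As $\bp(x+U)=\tilde\bp(x)$ for $x\in\La$, we get $\PSL'=\{x+U+\tilde\bp(x):x\in\La\}$, which is a perturbed lattice of the $\PSL$-type with perturbation field $\tilde\bp_{|\La}\stackrel{d}{=}\p$ independent of $U$; hence $\PSL'\stackrel{d}{=}\PSL$.

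\emph{First item (from $\p$ to $\bp$).} I would set $\bp(t):=\p\big(x(t-U)\big)$, i.e.\ $\bp$ is constant equal to $\p(x)$ on the cell $x+U+\DD$. Then $\bp(x+U)=\p(x)$, so $\PSL'=\{(x+U)+\bp(x+U):x\in\La\}=\PSL$ on the nose. It then remains to check that $\bp$ is $\Rd$-invariant: shifting $\bp$ by $s$ amounts to replacing $U$ by $U-s$, and decomposing $U-s=x(U-s)+u(U-s)$ reduces this to replacing $U$ by the uniform variable $u(U-s)\in\DD$ together with an index shift of $\p$ by the lattice vector $-x(U-s)$, which is absorbed by the $\La$-invariance of $\p$ conditionally on $U$. (If, as in the definition of the $\PSL'$-type, one insists that the uniform shift building $\PSL'$ be independent of $\bp$, use an independent copy $V$ in place of $U$ inside $\bp$ and take $U$ independent of $(\p,V)$; then $\bp$ is $\Rd$-invariant and independent of $U$, and a short reindexing of $\La$ — again absorbing the lattice carries via the $\La$-invariance of $\p$ — gives $\PSL'\stackrel{d}{=}\PSL$.)

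The only genuine obstacle is the conceptual point in the first item: the naive cell-wise-constant field $t\mapsto\p(x(t))$ is \emph{not} $\Rd$-invariant — its jump set is the deterministic grid $\bigcup_{x\in\La}(x+\partial\DD)$ — so the cell structure must be anchored at a uniformly random origin. Once that is done, every remaining step is an instance of the statement that a shift-invariant field composed with an independent random shift has the same law as itself, plus routine tracking of the $\La$-valued carries that arise because $\DD+\DD$ is not aligned with $\La$.
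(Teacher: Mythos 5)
Your argument is correct, and it is worth noting that the paper does not actually prove this claim: it is introduced with ``it is straightforward to check'' and left as an assertion, so there is no written proof to compare against. Your write-up is in fact more careful than the statement itself, in two respects. First, for item 1, the field produced by the deterministic decomposition $t=x+u$, $x\in\La$, $u\in\DD$ (which is what the claim's wording literally describes, with the letter $U$ unfortunately reused for the $\DD$-component of $t$) is only $\La$-invariant, since its discontinuity set is the fixed grid $\bigcup_{x\in\La}(x+\partial\DD)$; genuine $\Rd$-invariance indeed requires anchoring the cells at a uniform random point, as you do with $\bp(t):=\p\bigl(x(t-U)\bigr)$, and your check that a shift by $s$ amounts to replacing $U$ by the uniform variable $u(U-s)$ together with a lattice carry $-x(U-s)$ absorbed by the $\La$-invariance of $\p$ is exactly the right computation. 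Since the definition of $\PSL'$ requires $\bp$ to be independent of the uniform shift, your variant with an independent copy $V$ anchoring the cells (giving $\PSL'\stackrel{d}{=}\PSL$ rather than pointwise equality) is the version that literally matches the definition, and your reindexing argument for it is sound. Second, for item 2, equality can in any case only hold in distribution (with $\p:=\bp_{|\La}$ and the same $U$ the realizations differ), and your argument via $\tilde\bp:=\bp(\cdot+U)\stackrel{d}{=}\bp$, independent of $U$, proves precisely that; the ``elementary principle'' you invoke is the standard disintegration $\P(\bp(\cdot+U)\in A,\,U\in B)=\int_B\P(\bp\in A)\,\P_U(\mathrm{d}u)$, which you could state once for completeness. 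In short: no gap, and your proof supplies the details (random anchoring, independence bookkeeping, equality in law) that the paper's one-line assertion glosses over.
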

In the introduction, we chose the first option, but for our constructions in Section \ref{sec:Counter} it will be convenient to use the second option.

If one wanted to perturb hyperuniform point processes which are not lattices, the “displacement field” approach seems more tractable as it does not rely on an explicit labelling of the points. In that case however, the assumption of $\bp$ being independent of $\SL$ could be seen as restrictive (it is not very much so for lattices).
\begin{remark}
\label{rem:centeredness}
The properties of $\PSL$ are of course invariant under a global shift, thus we can and will assume that the perturbations (which are identically distributed) are centered, i.e.\ $\E[\p_0]=0$.
\end{remark}

\subsubsection*{Basic properties}
\begin{lemma}
\label{lem:basicppties}
If the perturbations are $\La$-invariant as assumed in Section \ref{sec:intro}, then:
\begin{enumerate}
   \item $\PSL$ is almost surely locally finite.
   \item $\PSL$ is a stationary point process.
\end{enumerate}
\end{lemma}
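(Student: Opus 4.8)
The plan is to prove the two items separately; note that neither requires any moment assumption --- only the standing hypotheses that the $\p_x$ are identically distributed, that their joint law is $\La$-invariant, and that $U \sim \mathrm{Unif}(\DD)$ is independent of $(\p_x)_{x\in\La}$. That local finiteness holds \emph{unconditionally} may seem surprising, since elsewhere in the paper wild perturbations destroy the second moment of $|\PSL\cap\BR|$; but only the second and higher moments can blow up, the first moment of $|\PSL\cap B|$ being always finite (and in fact equal to $|B|$, so that $\PSL$ has intensity $1$).

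For the first item I would compute a first moment rather than argue pointwise. Fix a bounded Borel set $B$ and write $|\PSL\cap B| = \sum_{x\in\La}\1\{x+U+\p_x\in B\}$. Taking expectations, using that $U$ is independent of $\p_x$ and that $\p_x$ has the same law as $\p_0$, and exchanging the sum over $\La$ with the integrals by Tonelli, one is reduced to the identity $\int_\DD |\La\cap(B-u-p)|\,\dd u = |B|$, valid for every fixed $p\in\Rd$ --- which is immediate from the fact that $\{\DD+x : x\in\La\}$ is a perfect tiling of $\Rd$ (Section \ref{sec:Lattices}). This gives $\E[|\PSL\cap B|] = |B|$; in particular $\E[|\PSL\cap\BR|] = |\BR| < \infty$ for every $r$, so $|\PSL\cap\BR| < \infty$ almost surely, and intersecting over $r\in\N$ yields almost sure local finiteness (and, as a byproduct, intensity $1$). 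Combined with the routine measurability of the configuration-valued map $(u,(q_x)_{x\in\La}) \mapsto \sum_{x\in\La}\delta_{x+u+q_x}$, this shows $\PSL$ is a genuine point process.

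For the second item I would fix $v\in\Rd$ and prove $\PSL+v \overset{d}{=} \PSL$ by folding onto the torus $\Rd/\La$. Write $v = x_v + u_v$ with $x_v\in\La$, $u_v\in\DD$, and for $t\in\Rd$ let $t = \lfloor t\rfloor + \{t\}$ denote its unique decomposition with $\lfloor t\rfloor\in\La$, $\{t\}\in\DD$. Put $W := \{U+u_v\}\in\DD$ and $Z := \lfloor U+u_v\rfloor\in\La$, so that $U+u_v = Z+W$. Since $U$ is uniform on $\DD$, its image on $\Rd/\La$ is the translation-invariant Haar probability measure, so $W$ is again uniform on $\DD$; and $(W,Z)$ is a deterministic function of $U$, hence independent of $(\p_x)_x$. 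Substituting $U = Z+W-u_v$ and re-indexing by $y := x+x_v+Z$ --- a bijection of $\La$ for each fixed realization, because $x_v+Z$ is a fixed lattice vector not depending on $x$ --- gives the pathwise identity $\PSL+v = \{\,y+W+\p_{y-x_v-Z} \ : \ y\in\La\,\}$. It then remains to check that $(W,(\p_{y-x_v-Z})_{y\in\La})$ has the same joint law as $(U,(\p_y)_{y\in\La})$: conditioning on $(W,Z)$, which is independent of the perturbations, the shifted family $(\p_{y-x_v-z})_y$ has, for every fixed $z\in\La$, the same law as $(\p_y)_y$ by $\La$-invariance (the index shift is translation by a fixed lattice vector); this conditional law does not depend on $(w,z)$, so the pair has product law $\mathrm{Unif}(\DD)\otimes\mathrm{Law}((\p_y)_y)$, which is exactly the law of $(U,(\p_y)_y)$. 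Applying the measurable functional $(u,(q_x)_x)\mapsto\sum_x\delta_{x+u+q_x}$ to both sides then yields $\PSL+v \overset{d}{=} \PSL$.

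The only genuinely delicate point is the bookkeeping in the stationarity argument: distinguishing the fixed lattice vectors $x_v, Z$ from the running index $x$, and observing that the re-indexing $y = x+x_v+Z$ is legitimate precisely because $Z$ depends only on $U$, not on $x$. Everything else --- the tiling identity, the use of Tonelli, the measurability statements --- is routine. I would present local finiteness first, since stationarity only serves to upgrade ``random point configuration'' to ``stationary point process'', an upgrade that already presupposes local finiteness.
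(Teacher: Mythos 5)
Your proof is correct. For stationarity you take essentially the same route as the paper: decompose the shifted uniform variable into a lattice part plus a fundamental-domain part, re-index the lattice sum, and invoke the $\La$-invariance of the perturbations; you are, however, more explicit than the paper about the one delicate point, namely that the lattice shift $Z$ (the paper's $x_\star$) is \emph{random}. Your conditioning on $(W,Z)$ --- a function of $U$, hence independent of $(\p_x)_x$ --- together with the observation that the conditional law of the shifted field does not depend on $(w,z)$, is exactly what is needed to upgrade the deterministic-shift invariance to this setting, a step the paper passes over silently. For local finiteness your argument differs in execution: the paper bounds $\E[|\PSL\cap\BR|]$ from above by splitting into lattice points with $|x|\le 100r$ and $|x|\ge 100r$ and estimating $\sum_{|x|\ge 100r}\P\left[\p_0\in\B_{r+C}(-x)\right]\preceq r^\d$, whereas you compute the expectation exactly via Tonelli and the tiling identity $\int_\DD|\La\cap(B-u-p)|\,\dd u=|B|$, obtaining $\E[|\PSL\cap B|]=|B|$. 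Both are first-moment arguments needing no moment assumption on $\p_0$; yours is arguably cleaner and buys the extra information that $\PSL$ has intensity exactly $1$ (a normalization the paper uses throughout but does not verify at this point), while the paper's cruder upper bound suffices for the lemma as stated.
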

\begin{proof} For the first item, take $r \geq 10$ and bound $\E[|\PSL \cap \BR|]$ by:
\begin{equation*}
\E[|\PSL \cap \BR|] \leq \O(r^\d) + \sum_{x \in \La, |x| \geq 100r} \P\left[ \{x + \p_x + U \in \BR\} \right]. 
\end{equation*}
Let $C$ be an upper bound on the diameter of the fundamental domain $\DD$. We see that 
\begin{equation*}
\P\left[ \{x + \p_x + U \in \BR\} \right] \leq \P\left[ \{\p_x \in \B_{r + C}(-x) \} \right] = \P\left[ \{\p_0 \in \B_{r + C}(-x) \} \right],
\end{equation*}
using the fact that the perturbations are identically distributed. Finally, it remains to observe that:
\begin{equation*}
\sum_{x \in \La, |x| \geq 100r} \P\left[ \{\p_0 \in \B_{r + C}(-x) \} \right] \preceq r^\d
\end{equation*}
is finite, thus $\E[|\PSL \cap \BR|]$ is finite, hence $\PSL$ is almost surely locally finite.

For the second item, take $t \in \Rd$ and decompose $U + t$ as $x_\star + v$ with $x_\star \in \La$ and $v \in \DD$ (see Section \ref{sec:Lattices}). If $U$ is uniformly distributed in $\DD$, then so is $v$. We then have, using the $\La$-invariance of $(\p_x)_{x \in \La}$ the following identities in law:
\begin{equation*}
\{x + U + \p_x + t \}_{x \in \La} = \{x +  \p_x + x_\star + v \}_{x \in \La} = \{ (x + x_\star) +  \p_{x + x_\star} + v \}_{x \in \La} = \{ x  +  \p_{x} + v \}_{x \in \La}. 
\end{equation*}
\end{proof}

\subsubsection*{First finiteness result}
\begin{lemma}
\label{lem:FiniteVar}
If $\E[|\p_0|^\d]$ is finite, then $\PSL$ is locally square integrable in the sense of Section \ref{sec:second_order}.
\end{lemma}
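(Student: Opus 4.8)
The plan is to bound $\E\bigl[|\PSL\cap\BR|^2\bigr]$ directly, reducing everything to the assumption $\E[|\p_0|^\d]<\infty$; beyond the fact that the $\p_x$ are identically distributed, no structural information about the perturbation field will be needed. Fix $r>0$, set $C := \sup_{u\in\DD}|u|<\infty$ and $B := \B_{r+C}$. Since a point $x+U+\p_x$ lying in $\BR$ forces $x+\p_x=(x+U+\p_x)-U$ to lie in $B$, one has the pointwise domination $|\PSL\cap\BR|\le N$, where $N := \#\{x\in\La : x+\p_x\in B\}=\sum_{x\in\La}\1\{x+\p_x\in B\}$ does not involve $U$. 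It thus suffices to show $\E[N^2]<\infty$.

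Write $A_x := \{x+\p_x\in B\}$ and $p_x := \P[A_x]=\P[\p_0\in B-x]$ (the last equality by identical distribution of the $\p_x$), and expand $\E[N^2]=\sum_{x,y\in\La}\P[A_x\cap A_y]=\E[N]+\sum_{x\neq y}\P[A_x\cap A_y]$. The diagonal term is finite with no moment assumption: $\E[N]=\sum_x p_x=\E\bigl[\#(\La\cap(B-\p_0))\bigr]$, and $B-\p_0=\B_{r+C}(-\p_0)$ is a ball of fixed radius, hence contains a bounded number of lattice points uniformly in its center. For the off-diagonal sum, the point is not to throw away the fact that $A_x$ says ``$\p_x$ falls in a \emph{fixed} ball near $-x$'': bounding $A_x$ crudely by $\{|\p_x|\ge |x|/2\}$ and using Markov leads to the logarithmically divergent series $\sum_x|x|^{-\d}$. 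Instead, use only $A_x\cap A_y\subseteq A_y$, applied whenever $|y|\ge|x|$, together with the fact that a ball of radius $|y|$ contains $\O\bigl((1+|y|)^\d\bigr)$ lattice points; this gives
\begin{equation*}
\sum_{x\neq y}\P[A_x\cap A_y]\ \preceq\ \sum_{y\in\La}\#(\La\cap\B_{|y|})\,p_y\ \preceq\ \sum_{y\in\La}(1+|y|)^\d\,p_y .
\end{equation*}

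It remains to prove $\sum_{y\in\La}(1+|y|)^\d p_y<\infty$, and this is where $\E[|\p_0|^\d]<\infty$ enters. Interchanging sum and expectation,
\begin{equation*}
\sum_{y\in\La}(1+|y|)^\d p_y\ =\ \E\!\left[\,\sum_{y\in\La\cap(B-\p_0)}(1+|y|)^\d\right] ,
\end{equation*}
and every lattice point $y$ in $B-\p_0=\B_{r+C}(-\p_0)$ satisfies $|y|\le|\p_0|+r+C$ while there are only $\O(1)$ such points, so the inner sum is $\O\bigl((1+|\p_0|)^\d\bigr)$. Since $\d\ge1$ gives $(a+b)^\d\le 2^{\d-1}(a^\d+b^\d)$, we conclude $\sum_{y}(1+|y|)^\d p_y\preceq 1+\E[|\p_0|^\d]<\infty$ (with an implicit constant allowed to depend on $r$), hence $\E\bigl[|\PSL\cap\BR|^2\bigr]\le\E[N^2]<\infty$ for every $r>0$, which is the assertion of the lemma.

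The only genuinely delicate point is the one flagged above: one must not replace the event $\{x+\p_x\in B\}$ by $\{|\p_x|\ge\dist(x,B)\}$ too early, since the resulting estimates diverge exactly at the critical exponent $\d$ (matching the sharpness asserted in the third item of Theorem~\ref{theo:main}). Retaining the ``fixed ball'' structure in the form $A_x\cap A_y\subseteq A_y$ for the farther index $y$ is what makes the weighted series $\sum_y(1+|y|)^\d p_y$ comparable to $\E[|\p_0|^\d]$ rather than to the divergent $\sum_y|y|^{-\d}$.
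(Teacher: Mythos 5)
Your proof is correct and follows essentially the same route as the paper's: bound the off-diagonal double sum by keeping, for each pair, only the event attached to the index of larger norm (which contributes the lattice-count factor $\O\bigl((1+|y|)^\d\bigr)$), and then use a Fubini-type interchange against the law of $\p_0$ to show the weighted sum $\sum_y (1+|y|)^\d\,\P[\p_0\in B-y]$ is controlled by $1+\E[|\p_0|^\d]$. The only differences are cosmetic: you discard $U$ at the outset via the deterministic domination $|\PSL\cap\BR|\le N$ and perform the Fubini directly on the lattice sum, whereas the paper keeps $U$, splits off $|x|\le 2r$, and compares the tail of the lattice sum to a continuous integral before applying Fubini.
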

\begin{proof}[Proof of Lemma \ref{lem:FiniteVar}]
We write, with $U$ uniformly distributed over the fundamental domain of $\La$:
\begin{multline*}
\E[|\PSL \cap \BR|^2] = \E \left(\sum_{x \in \La} \I\{ x + \p_x  + U \in \BR \} \right)^2  = \E [ \sum_{x, x' \in \La} \I\{ x + \p_x + U \in \BR \}  \times \I\{ x' + \p_{x'} + U \in \BR \} ]
\\ \leq \sum_{x, x' \in \La} \min\left( \P\left[ \{ x + \p_x +U \in \BR \} \right], \P\left[ \{ x' + \p_{x'} + U \in \BR \} \right] \right).
\end{multline*}
Let $C$ be an upper bound on the diameter of the fundamental domain $\DD$. Since the indices $x, x'$ play symmetrical roles we can write:
\begin{multline*}
\sum_{x, x' \in \La} \min\left( \P\left[ \{ x + \p_x  {+ U} \in \BR \} \right], \P\left[ \{ x' + \p_{x'} + U \in \BR \} \right] \right) \leq 2 \sum_{x, x' \in \La, |x'| \leq |x|} \P\left[ \{ x + \p_x + U\in \BR \} \right] \\
\preceq \sum_{x \in \La} |x|^\d \P\left[ \{ \p_x \in \B_{r+C}(-x) \} \right].
\end{multline*}
Using the symmetry of $\La$ under $x \mapsto -x$ and the fact that the random variables $(\p_x)_{x \in \La}$ are identically distributed, we also have:
\begin{equation*}
\sum_{x \in \La} |x|^\d \P\left[ \{ \p_x \in \B_{r+C}(-x) \} \right] = \sum_{x \in \La} |x|^\d \P\left[ \{ \p_0 \in \B_{r+C}(x) \} \right].
\end{equation*}
The contribution to the sum coming from $\{x \in \La, |x| \leq 2r\}$ is $\O(r^{\d})\times \O(r^{\d})$ and the rest of the sum can be compared to an integral (here, without loss of generality, we assume $r$ large enough so that $|x|^\d \leq 10 |x-r|^\d$ for $|x| \geq 2r$):
\begin{equation*}
\sum_{x \in \La, |x| \geq 2r} |x|^\d \P\left[ \{ \p_0 \in \B_{r + C}(x) \} \right] \preceq \int_{t \in \Rd, |t| \geq r} |t|^\d \P\left[ \{ \p_0 \in \B_{r+C}(t) \} \right] \dd t,
\end{equation*}
which itself, using Fubini's theorem, can be compared to an expression involving the law $P_0$ of $\p_0$, namely:
\begin{equation*}
\int_{t \in \Rd, |t| \geq r} |t|^\d \P\left[ \{ \p_0 \in \B_{r+C}(t) \} \right] \preceq r^\d \int_{t \in \Rd} |t|^\d \dd P_0(t),
\end{equation*}
and the last integral coincides with $\E[|\p_0|^\d]$, which is finite.
\end{proof}
In particular the reduced second factorial moment measure $\RSFM$ of $\PSL$ is well-defined. We now give its expression.

\subsubsection*{The reduced covariance function $\RSFM$ of perturbed lattices}
\begin{proposition}
\label{prop:ReducedCovPL}
For all bounded Borel subsets $B$ of $\Rd$, we have:
   \begin{equation}
   \label{alpha2PSL}
   \RSFM(B) = \E \sum_{x \in \La \setminus \{0\}} \I{\{x+\p_x-\p_0 \in B\}}.
   \end{equation}
\end{proposition}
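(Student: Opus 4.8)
The plan is to compute the reduced second factorial moment measure $\RSFM$ of $\PSL$ directly from Definition~\ref{def:MomentMeasure}, using the expression \eqref{eq_2ndRFMM} where the ``anchor'' set of volume $1$ is taken to be a fundamental domain $\DD$ of $\La$ (recall the remark after Definition~\ref{def:MomentMeasure} allowing any Borel set of volume $1$). We parametrize the points of $\PSL$ by $\La$: a point of $\PSL$ is of the form $x + U + \p_x$ with $x \in \La$, $U$ uniform in $\DD$, and the pair $(U,(\p_x)_x)$ making the parametrization (which is a genuine labelling since distinct $x$ give distinct labels, even if the points happen to coincide). So the double sum $\sum^{\neq}_{X_1,X_2 \in \PSL}$ becomes the double sum $\sum_{x_1 \neq x_2,\ x_1,x_2 \in \La}$ over labels.

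First I would write, using \eqref{eq_2ndRFMM} with $\DD$ in place of $[0,1)^\d$,
\begin{equation*}
\RSFM(B) = \E \sum_{x_1 \neq x_2 \in \La} \I\{ x_1 + U + \p_{x_1} \in \DD \} \, \I\{ (x_2 + U + \p_{x_2}) - (x_1 + U + \p_{x_1}) \in B \}.
\end{equation*}
The second indicator simplifies: the $U$'s cancel, leaving $\I\{ (x_2 - x_1) + (\p_{x_2} - \p_{x_1}) \in B \}$. Next, fix $x_1 = y \in \La$ and sum over the free index $x_2$, writing $x_2 = y + x$ with $x \in \La \setminus \{0\}$; by the $\La$-invariance of the perturbation field, the law of $(\p_{y+x} - \p_y)_{x}$ does not depend on $y$ and equals the law of $(\p_x - \p_0)_x$. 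This gives
\begin{equation*}
\RSFM(B) = \sum_{y \in \La} \E\left[ \I\{ y + U + \p_y \in \DD \} \sum_{x \in \La \setminus \{0\}} \I\{ x + \p_{x} - \p_{0} \in B \} \right],
\end{equation*}
after relabelling, where I have used independence of $U$ from $(\p_x)_x$ only to make sense of the expectation, not to factorize (the event $\{y + U + \p_y \in \DD\}$ and the inner sum are not independent). The point is now that $\sum_{y\in\La}\I\{y+U+\p_y\in\DD\} = 1$ almost surely: for any realization of $(U,(\p_x)_x)$ and any value $v := U + \p_y$ — wait, this requires care, see below — in fact the clean statement is that $U + \p_y$ for the \emph{single} relevant $y$ lands in $\DD$. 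Precisely, since $\{x + \DD : x \in \La\}$ tiles $\Rd$, for each fixed realization and each $y$ there is exactly one $x(y) \in \La$ with $y + U + \p_y \in x(y) + \DD$; this is not quite $\sum_y \I\{y + U+\p_y\in\DD\}=1$. The correct route is to first condition on $(\p_x)_x$ and integrate over $U$ uniform in $\DD$: for fixed $y$ and fixed perturbations, as $U$ ranges over $\DD$, the point $y + \p_y + U$ ranges over $y + \p_y + \DD$, which is a translate of $\DD$ and hence (by the tiling property) meets $\DD$ in a set of total Lebesgue measure exactly $1$ when summed over $y$... I would instead swap the order: condition on $(\p_x)_{x\in\La}$, and use that $\{U \in \DD : y + \p_y + U \in \DD \text{ for some } y\}$ — for each $U\in\DD$ there is exactly one $y=y(U)$ with $y+\p_y+U\in\DD$, giving $\sum_{y}\I\{y+\p_y+U\in\DD\}=1$ for a.e.\ $U\in\DD$ — hence integrating the displayed identity over $U$ and relabelling the inner sum's index by $x$ (whose distribution is $y$-independent by $\La$-invariance) collapses the $y$-sum to the single term $\E[\sum_{x\in\La\setminus\{0\}}\I\{x+\p_x-\p_0\in B\}]$, which is \eqref{alpha2PSL}.

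\textbf{Main obstacle.} The only delicate point is this bookkeeping with the labelling and the tiling: one must be careful that $\PSL$ is a point \emph{process} possibly with multiplicities, so the ``$\neq$'' in the double sum refers to \emph{labels} $x \in \La$ and not to spatial positions, and that the collapsing of the sum over $y \in \La$ genuinely uses both the exact tiling property of $\DD$ under $\La$ and the $\La$-invariance of the joint law of $(\p_x)_{x\in\La}$ (to identify the inner sum's law independently of $y$). Everything else — interchanging $\E$ with the sums, which is justified by Lemma~\ref{lem:FiniteVar} and Tonelli since all terms are nonnegative — is routine.
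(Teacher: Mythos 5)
Your overall strategy mirrors the paper's (parametrize the points by lattice labels, anchor on a fundamental domain, use $\La$-invariance and the tiling of $\Rd$ by translates of $\DD$), but the execution of the key step is flawed. The displayed identity after ``This gives'' is not justified: you replace the inner sum $\sum_{x\neq 0}\I\{x+\p_{y+x}-\p_y\in B\}$ by $\sum_{x\neq 0}\I\{x+\p_x-\p_0\in B\}$ using only the equality in law of the marginals $(\p_{y+x}-\p_y)_x$ and $(\p_x-\p_0)_x$, while the factor $\I\{y+U+\p_y\in\DD\}$ multiplying it still involves $\p_y$; a substitution inside a joint expectation requires shifting the \emph{whole} field. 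If you apply the $\La$-invariance correctly (for each fixed $y$, the family $(\p_{y+z})_{z\in\La}$ has the same joint law as $(\p_z)_{z\in\La}$), the anchor indicator becomes $\I\{y+U+\p_0\in\DD\}$, not $\I\{y+U+\p_y\in\DD\}$. As written, your display asserts $\RSFM(B)=\E[N_\DD\, Z]$, where $N_\DD:=\sum_{y}\I\{y+U+\p_y\in\DD\}$ is the number of perturbed points in $\DD$ and $Z:=\sum_{x\neq0}\I\{x+\p_x-\p_0\in B\}$, and this is in general different from the desired $\E[Z]$.

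The second problem compounds the first: your collapsing claim that for a.e.\ $U\in\DD$ there is exactly one $y$ with $y+\p_y+U\in\DD$, i.e.\ $\sum_y\I\{y+\p_y+U\in\DD\}=1$, is false. That sum is precisely the number of points of the perturbed lattice falling in $\DD$; it has expectation one but is not almost surely equal to one (already for $\d=1$, $\DD=[0,1)$: a realization with $\p_0=0.6$ and $\p_1=-0.6$ puts two points in $\DD$ when $U<0.4$ and none when $U\geq 0.6$). The tiling argument collapses the $y$-sum only when the argument of the anchor indicator depends on $y$ through the translation alone, i.e.\ when the perturbation appearing there is the same for every $y$ --- which is exactly what the correctly applied invariance delivers: $\sum_{y\in\La}\I\{y+U+\p_0\in\DD\}=1$ holds deterministically, since $U+\p_0$ lies in exactly one translate $\DD-y$. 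With these two corrections (shift the whole field for each fixed $y$, then use the exact tiling) your argument becomes the paper's proof; the interchanges of $\E$ and the sums are indeed harmless by nonnegativity, as you note.
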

\begin{proof}[Proof of Proposition \ref{prop:ReducedCovPL}]
We start from the definition \eqref{eq_2ndRFMM} of $\RSFM$, using the fundamental domain $\DD$ of $\La$ instead of $[0,1)^\d$ (this is valid because $\DD$ has volume $1$):
\begin{multline*}
\RSFM(B) = \E \sum^{\neq}_{x \in \PSL, y \in \PSL} \I\left\lbrace x \in \DD, y-x \in B \right\rbrace \\
= \E \sum^{\neq}_{x \in \La, y \in \La} \I\left\lbrace x + \p_x + U\in \DD, y + \p_y -x - \p_x \in B \right\rbrace \\
= \E \sum^{\neq}_{x \in \La, y \in \La} \I\left\lbrace \p_x +U \in \DD - x, y + \p_y -x - \p_x \in B \right\rbrace \\
= \E \sum_{x \in \La} \sum_{y \in \La \setminus \{x\}} \I\left\lbrace \p_0 +U \in \DD - x, y + \p_{y-x} -x - \p_0 \in B \right\rbrace \\
= \E \sum_{x \in \La} \sum_{y' \in \La \setminus \{0\}} \I\left\lbrace \p_0 +U \in \DD - x, y' + \p_{y'} - \p_0 \in B \right\rbrace \\
= \E \sum_{y' \in \La \setminus \{0\}} \I\left\lbrace \p_0 +U \in \cup_{x \in \La} \{ \DD - x \}, y' + \p_{y'} - \p_0 \in B \right\rbrace
\\
= \E \sum_{y' \in \La \setminus \{0\}} \I{\{y'+\p_{y'}-\p_0 \in B\}}.
\end{multline*}
In the first line we use the definition of $\PSL$ {(the symbol $\neq$ there refers to points in the perturbed lattice that are obtained as perturbations of two different lattice points (they could exceptionally end up at the same spot after perturbations)}, the second line is elementary, the third line uses the $\La$-invariance of the perturbations, the fourth line uses the fact that $\La$ is a group, the fifth line uses the fact {that the translates $\{ \DD - x \}$ of the fundamental domain are disjoint and the sixth line makes use of the fact that they cover $\Rd$} (see Section \ref{sec:Lattices}).
\end{proof}

% \subsubsection*{Remainder estimates for $\RSFM$}
% \begin{lemma}
% \label{lem:remainder}
% Let $f : \Rd \to \R$ be continuous and compactly supported in $\Bm$ for some $m \geq 1$. Assume that $M \geq 10 m$. We have:
% \begin{equation}
% \label{eq:remainder}
% \E \sum_{x \in \La, |x| \geq M} f(x+\p_x-\p_0) \preceq \|f\|_{\infty} \times \E \left[ |\p_0|^\d \times \I \left\lbrace |\p_0| \geq \frac{1}{4} M\right\rbrace \right].
% \end{equation}
% \end{lemma}
% Note that if $\E[|\p_0|^\d] < + \infty$, then the right-hand side of \eqref{eq:remainder} tends to $0$ as $M \to \infty$. 
% \begin{proof}[Proof of Lemma \ref{lem:remainder}] Without loss of generality, we can assume that $0 \leq f \leq 1$. {Let $|x| \geq M$. Since $f$ is supported on $\Bm$, then for $f(x + \p_x - \p_0)$ to not be $0$ we need $|\p_x - \p_0| \geq |x| - m \geq \hal |x|$, so $\max(|\p_x|, |\p_0|) \geq \frac{1}{4}{|x|}$. We thus have, since $\p_x$ and $\p_0$ are identically distributed:
% \begin{equation*}
% \sum_{|x| \geq M} \E  f(x+\p_x-\p_0) \preceq \sum_{|x| \geq M} \P\left(|\p_0| \geq \frac{1}{4}|x| \right),
% \end{equation*}
% and by elementary lattice counting this discrete sum is comparable to the integral: 
% \begin{equation*}
% \int_{s \geq \frac{M}{4}} s^{\d-1} \P\left(|\p_0| \geq s \right) \preceq \E\left[ |\p_0|^\d \I \left\lbrace |\p_0| \geq \frac{M}{4} \right\rbrace\right].
% \end{equation*}}
% \end{proof}

\newcommand{\Rem}{\mathsf{Rem}}
\section{Positive result in dimension 2}
\label{sec:d2p>2}
This section is devoted to the proof of our main result in dimension $2$, namely:
\begin{proposition}
\label{prop:L2D2}
If the perturbations have a finite second moment, then $\PSL$ is hyperuniform.
\end{proposition}

\subsubsection*{Preliminaries}
\newcommand{\halpharedPL}{\widehat{\alpha}^{(2)}_{\mathrm{red}, \SL}}
\newcommand{\halpharedPSL}{\widehat{\alpha}^{(2)}_{\mathrm{red}, \PSL}}
\newcommand{\hchir}{\widehat{\chi_r}}
\newcommand{\chil}{\chi_\ell}
\newcommand{\hchil}{\hat{\chi}_\ell}
\newcommand{\chir}{\chi_r}
\label{subsecPrelim}
Without loss of generality, we assume that the perturbations are centered, see Remark \ref{rem:centeredness}. Since the perturbations have a finite second moment, by Proposition~\ref{prop:ReducedCovPL}, the measure $\RSFM$ of $\PSL$ exists. We denote by $\halpharedPSL$ its Fourier transform, and by $\halpharedPL$ the corresponding object for the stationary lattice.

In the sequel, we fix a smooth, compactly supported function $\chi : \R^2 \to [0,1]$ such that
\begin{equation*}
\chi \equiv 1 \text{ on } \B_1, \quad \chi \equiv 0 \text{ outside } \B_2,
\end{equation*}
and for $r > 0$ we define $\chi_r : \R^2 \to \R$ as
\begin{equation}
\label{eq:defchir}
\chi_r := s \mapsto r^2 \chi\left( r s \right).
\end{equation}
We then focus on showing that
\begin{equation}
\label{chihalpha}
\int_{\R^2} \chi_r(s) \left(\dd \halpharedPSL(s) - \dd \halpharedPL(s)\right) = o(1) \text{ as $r \to \infty$.}
\end{equation}
Indeed, we have, by our definition of $\chi$ and $\chir$:
\begin{multline*}
\int_{|s| \leq r^{-1}} \dd \halpharedPSL(s) \leq \frac{1}{r^2} \int_{\R^2} \chi_r(s) \dd \halpharedPSL(s) \\ = \frac{1}{r^2} \int_{\R^2} \chi_r(s) \dd \halpharedPL(s) + \left(\frac{1}{r^2} \int_{\R^2} \chi_r(s) \left(\dd \halpharedPSL(s) - \dd \halpharedPL(s)\right)  \right) \\
\leq \int_{|s| \leq 2 r^{-1}} \dd \halpharedPL(s) + \left(\frac{1}{r^2} \int_{\R^2} \chi_r(s) \left(\dd \halpharedPSL(s) - \dd \halpharedPL(s)\right)  \right).
\end{multline*}
In view of the spectral characterization of hyperuniformity recalled in Proposition \ref{prop:BH}, and since the non-perturbed stationary lattice is known to be hyperuniform of class I, we know that 
\begin{equation*}
\halpharedPL\left( \B_{2r^{-1}} \right) = \O(r^{-2-1}).
\end{equation*}
If \eqref{chihalpha} holds, we obtain:
\begin{equation*}
\halpharedPSL(\B_{r^{-1}}) = o(r^{-2}),
\end{equation*}
thus $\PSL$ is hyperuniform, and any quantitative upper bound of the form $\O(r^{-\gamma})$ in \eqref{chihalpha} translates into an estimate for the number variance as mentioned in Proposition \ref{prop:BH}. 

The starting point is to write the difference $\halpharedPSL - \halpharedPL$ using Proposition~\ref{prop:ReducedCovPL}, which yields:
\begin{equation*}
\int_{\R^2} \chi_r(s) \left(\dd \halpharedPSL(s) - \dd \halpharedPL(s)\right) = \sum_{x \in \La \setminus \{0\}} \E \left[\hchir(x+\p_x-\p_0) - \hchir(x) \right].
\end{equation*}

We will repeatedly use the fact that $\hchir(x) = \widehat{\chi}\left( \frac{x}{r} \right)$ and that since $\chi$ is smooth and compactly supported, its Fourier transform is Schwartz, in particular we have:
\begin{equation}
\label{deriveeschir}
|\hchir(x)| = \O\left( \frac{r^4}{|x|^4}\right),  \quad |\nabla \hchir(x)| = \O\left( \frac{r^3}{|x|^4}\right), \quad |\nabla^2 \hchir(x)| \leq \O\left( \frac{r^2}{|x|^4}\right) \text{ for } |x| \geq r.
\end{equation}

\subsubsection*{Truncation of the sum.}
\begin{claim}
Let $M_r$ be such that $M_r \gg r$ to be chosen later. We have:
\begin{equation}
\label{eq:Btruncate}
B = \sum_{x \in \La \setminus \{0\}, |x| \leq M_r} \E \left[\hchir(x+\p_x-\p_0)- \hchir(x) \right] + o_r(1),
\end{equation}
with an error term given by $\frac{r}{M_r} + \E\left[ |\p_0|^2  \1_{|\p_0| \geq \frac{1}{4} M_r} \right]$
\end{claim}
\begin{proof}
Fix $x \in \La \setminus \{0\}$ with $|x| \geq M_r$, and write $\p = \p_x - \p_0$ for simplicity. If $|\p| \leq \hal |x|$, we have
\begin{equation*}
\hchir(x+\p)- \hchir(x) = \nabla \hchir(x) \cdot \p + \O\left(|\p|^2 \frac{r^{2}}{|x|^{4}}\right),
\end{equation*}
using \eqref{deriveeschir} to bound the second derivative of $\hchir$. On the other hand, if $|\p| \geq \hal |x|$,  since $\hchir$ is bounded we have simply $\hchir(x+\p)- \hchir(x) = \O(1)$. Note that $|\p| \geq \hal |x|$ implies that $\max\left(|\p_0|, |\p_x|  \right) \geq \frac{1}{4} |x|$ and recall that $\p_0, \p_x$ are identically distributed, hence:
\begin{equation*}
\E\left[ |\hchir(x+\p)- \hchir(x)| \1_{|\p| \geq \hal |x|} \right] \preceq \P\left[ |\p_0| \geq \frac{1}{4} |x| \right].
\end{equation*}
Moreover, since the random variable $\p$ is centered, we can write:
\begin{equation*}
\E\left[ \nabla \hchir(x) \cdot \p \1_{|\p| \leq \hal |x|} \right] = - \E\left[ \nabla \hchir(x) \cdot \p \1_{|\p| \geq \hal |x|} \right] \preceq \frac{r^3}{|x|^4} \E\left[ |\p| \1_{|\p| \geq \hal |x|} \right] \preceq \frac{r^3}{|x|^5}  \E\left[ |\p|^2 \1_{|\p| \geq \hal |x|} \right] \preceq \frac{r^3}{|x|^5} \preceq \frac{r^2}{|x|^4}.
\end{equation*}
In summary, we obtain:
\begin{equation*}
\E \left[ \left|\hchir(x+\p_x-\p_0)- \hchir(x) \right|\right] \preceq \frac{r^{2}}{|x|^{4}} + \P \left[ |\p_0| \geq \frac{1}{4} |x| \right]
\end{equation*}
Summing over $x \in \La$ with $|x| \geq M_r$ yields:
\begin{multline*}
\sum_{x \in \La, |x| \geq M_r} \E \left[\hchir(x+\p_x-\p_0)- \hchir(x) \right] \preceq \sum_{x \in \La, |x| \geq M_r} \frac{r^{2}}{|x|^{4}}  + \sum_{x \in \La, |x| \geq M_r} \P \left[ |\p_0| \geq \frac{1}{4} |x| \right] \\
\preceq \frac{r}{M_r} + \E\left[ |\p_0|^2  \1_{|\p_0| \geq \frac{1}{4} M_r} \right],
\end{multline*}
which gives \eqref{eq:Btruncate}.
\end{proof}

Next, for each $x \in \La$, using the characteristic function $\varphi_{\p_x - \p_0}$ of $\p_x - \p_0$, we can write:
\begin{multline}
\label{totraceback}
\E \left[ \hchir(x+\p_x-\p_0)- \hchir(x) \right]  =  \E \left[ \int_{t \in \R^2} e^{-i t\cdot (x + \p_x - \p_0)} \chi_r(t) \dd t \right] - \int_{t \in \R^2} e^{-i t \cdot x} \chi_r(t) \dd t  \\ = \int_{t \in \R^2} e^{-i t\cdot x} \chi_r(t) \left( \E \left[ e^{-i t\cdot (\p_x - \p_0)}  \right] - 1 \right) \dd t 
= \int_{t \in \R^2} e^{-i t\cdot x} \chi_r(t) \left(\varphi_{\p_x - \p_0}(t) - 1 \right) \dd t.
\end{multline}
It is natural to introduce a small-$t$ expansion \eqref{eq:Taylor2} of the integrand, that we now present.

\subsubsection*{Taylor expansion of the characteristic function.}
We rely on a second order expansion of $\varphi_{\p_x - \p_0}(t)$ for $t$ small, which makes a crucial use of the existence of a second moment for the perturbations.
\begin{lemma}
\label{lem:Taylor}
There exists $g : \R^2 \to (0, + \infty)$ depending only on the distribution of $\p_0$ such that:
\begin{enumerate}
   \item $\lim_{t \to 0} g(t) = 0$, $g$ is bounded on $\R^2$. Moreover $g$ is radially symmetric and radially non-decreasing.
   \item For all $x \in \La$, we control the error term in the second-order Taylor's expansion of $\varphi_{\p_x - \p_0}$ by:
   \begin{equation}
   \label{eq:Taylor}
\left| \varphi_{\p_x - \p_0}(t) - \left(1 - \frac{1}{2} \E\left[ \big(t \cdot (\p_x - \p_0)\big)^2 \right] \right) \right| \leq g(t) \min\left(|t|^2, 1\right), \text{ for all } t \in \R^2.
   \end{equation}
\end{enumerate}
\end{lemma}
Using such a “small wave-number expansion” is a key part in the argument of \cite{gabrielli2004point}.
\begin{proof}[Proof of Lemma \ref{lem:Taylor}]
An elementary estimate found in \cite[(3.3.3)]{Durrett_2019} says that if $X$ is a random variable in $\Rd$, for all $n \geq 0$ we have the following control on the error term in the Taylor's expansion of the characteristic function of $X$ to order $n$: 
\begin{equation}
\label{durrett}
\left|\E[e^{-is \cdot X}] -  \sum_{k=0}^n \frac{(-i)^k}{k!} \E[(s \cdot X)^k] \right| \leq \E \left[ \min(|s \cdot X|^{n+1}, 2|s \cdot X|^n) \right] \text{ for all $s \in \Rd$.}
\end{equation}
For $x \in \La$, we can use \eqref{durrett} with $X = \p_x - \p_0$ and $n = 2$, which gives:
\begin{equation*}
\left|\varphi_{\p_x - \p_0}(t) - \left(1 - \frac{1}{2} \E\left[  \big(t \cdot (\p_x - \p_0)\big)^2 \right] \right) \right| \preceq |t|^2 \E \left[ \min\left(|t| |\p_x - \p_0|^3, |\p_x - \p_0|^2\right) \right].
\end{equation*}
Using elementary inequalities and the fact that $\p_x$ and $\p_0$ are identically distributed, we can write
\begin{equation}
\label{symDur}
|t|^2 \E \left[ \min(|t| |\p_x - \p_0|^3, |\p_x - \p_0|^2) \right] \preceq |t|^2 \E \left[ \min(|t| |\p_0|^3, |\p_0|^2) \right].
\end{equation}
The right-hand side of \eqref{symDur} is radially symmetric, radially non-decreasing, does not depend on $x$ and is $o(|t|^2)$ for $t\to0$ by dominated convergence (the fact that the convergence is indeed dominated relies on the fact that $\p_0$ has a finite second moment). This shows that we can indeed take $g(t) \to 0$ as $t \to 0$ in \eqref{eq:Taylor}. On the other hand, for large values of $t$, since $\varphi_{\p_x - \p_0}$ is bounded by $1$, the error in \eqref{eq:Taylor} is exactly of order $|t|^2$ and we can take $g(t)$ of order $1$. 

In general, we cannot control the “speed” for the decay of $g$. Note however that if we placed a stronger moment assumption on $\p_0$, e.g. by asking that $\E[|\p_0|^{2 + \delta}]$ be finite, then we could take an explicit form for $g(t)$ as $|t|^{\delta}$ for $|t|$ small.
\end{proof}

In the sequel, we may thus use the following expansion for $|t|$ small:
\begin{equation}
\label{eq:Taylor2}
\varphi_{\p_x - \p_0}(t) = 1 - \frac{1}{2} \E\left[  \big(t \cdot (\p_x - \p_0)\big)^2\right] + o(|t|^2),
\end{equation}
with an error term which may depend on $x$ but is bounded by $g(t)|t|^2$ uniformly in $x$.

\subsubsection*{Inserting the Taylor expansion into the truncated sum.}
Thanks to \eqref{totraceback} and to our Taylor expansion, we have for all $x$:
\begin{equation*}
\E \left[ \hchir(x+\p_x-\p_0)- \hchir(x) \right] = - \hal  \int_{t \in \R^2} e^{-i t\cdot x} \chi_r(t) \E\left[ \big(t \cdot (\p_x - \p_0)\big)^2 \right]  \dd t \\
+ \int_{t \in \R^2} e^{-i t\cdot x} \chi_r(t) o(|t|^2) \dd t.
\end{equation*}
We may thus write the sum in the right-hand side of \eqref{eq:Btruncate} as: 
\begin{equation*}
\sum_{x \in \La \setminus \{0\}, |x| \leq M_r} \E \left[\hchir(x+\p_x-\p_0)- \hchir(x) \right] = B_1 + B_2,
\end{equation*}
where $B_1$ is the “main term”:
\begin{equation}
\label{def:B_1}
B_1 : = - \hal \sum_{x \in \La \setminus \{0\}, |x| \leq M_r}   \int_{t \in \R^2} e^{-i t\cdot x} \chi_r(t)  \E\left[ \big(t \cdot (\p_x - \p_0)\big)^2 \right] \dd t,
\end{equation}
and $B_2$ is the error due to the small wave-number Taylor's expansion:
\begin{equation}
\label{def:B2}
 B_2 : = \sum_{x \in \La \setminus \{0\}, |x| \leq M_r} \int_{t \in \R^2} e^{-i t\cdot x} \chi_r(t) o(|t|^2) \dd t.
 \end{equation}
 We call $B_1$ the “main” term because it carries the most tractable information about the perturbations. However, our goal is to show that \emph{both terms $B_1, B_2$ are going to $0$} as $r \to \infty$. 

\subsection{The main term \texorpdfstring{$B_1$}{B1}}
\label{sec:main}
\newcommand{\T}{\mathbb{T}}
\newcommand{\gl}{g_\ell}
\newcommand{\hgl}{\widehat{\gl}}
% Recall that $B_1$ is given by ({note that the restriction $x \neq 0$ is irrelevant}):
% \begin{equation}
% \label{recapB1}
% B_1 = - \hal \sum_{x \in \La, |x| \leq 100r}  \int_{t \in \R^2} e^{-i t\cdot x} \chi_r(t) \E\left[ \big(t \cdot (\p_x - \p_0)\big)^2 \right] \dd t.
% \end{equation}
\begin{lemma}
\label{lem:B1}
We have $B_1 = o_r(1)$, with an error term given by $\O\left(\frac{r}{M_r}\right) + C(r)$, where $C(r) \to 0$ in such a way that $\sum_{n} C(2^n) < + \infty$. 
\end{lemma}
\begin{proof}[Proof of Lemma \ref{lem:B1}] 
$B_1$ is written as a finite sum, but we would like to apply Poisson's formula, so we start by re-introducing the missing terms.

\paragraph*{Re-introducing some terms in the sum}
\newcommand{\psiab}{\psi^{ab}}
\newcommand{\psiabr}{\psi^{ab}_r}
\newcommand{\hpsiab}{\widehat{\psiab}}
\newcommand{\hpsiabr}{\widehat{\psiabr}}
For fixed $x \in \La$, write
\begin{equation*}
\int_{t \in \R^2} e^{-i t\cdot x} \chi_r(t) \big(t \cdot (\p_x - \p_0)\big)^2 \dd t = \sum_{a,b \in \{1,2\}} (\p_x -\p_0)_a (\p_x -\p_0)_b \int_{t \in \R^2} e^{-i t\cdot x} \chi_r(t) t_a t_b \dd t,
\end{equation*}
where $a, b$ denote the first or second coordinates in $\R^2$. Taking the expectation and using the finite second moment of the perturbations, we get:
\begin{equation*}
\int_{t \in \R^2} e^{-i t\cdot x} \chi_r(t) \E\left[ \big(t \cdot (\p_x - \p_0)\big)^2 \right] \dd t =  \sum_{a,b \in \{1,2\}} \alpha_{ab}(x) \int_{t \in \R^2} e^{-i t\cdot x} \chi_r(t) t_a t_b \dd t,
\end{equation*}
all the $\alpha_{ab}(x) := \E\left[   (\p_x -\p_0)_a (\p_x -\p_0)_b \right]$ being $\O(1)$ uniformly in $x$. Define $\psiabr$ as:
\begin{equation*}
\psiabr(t) := \chi_r(t) t_a t_b = r^2 \chi(rt) t_a t_b = \frac{1}{r^2} \times r^2 \psiab(rt) \text{ where } \psiab(s) := \chi(s) s_a s_b,
\end{equation*}
so that in particular we have $\hpsiabr(x) = \frac{1}{r^2} \hpsiab\left( \frac{x}{r} \right)$. We have obtained:
\begin{equation*}
\int_{t \in \R^2} e^{-i t\cdot x} \chi_r(t) \E\left[ \big(t \cdot (\p_x - \p_0)\big)^2 \right]\dd t = \sum_{a,b \in \{1,2\}} \alpha_{ab}(x) \frac{1}{r^2} \hpsiab\left( \frac{x}{r} \right).
\end{equation*}
The map $\psiab$ is compactly supported, hence its Fourier transform decays fast, for instance for $|x| \geq r$ we have $\hpsiab\left( \frac{x}{r} \right) \preceq \frac{r^{4}}{|x|^{4}}$. We get, for $|x| \geq M_r \gg r$:
\begin{equation*}
\int_{t \in \R^2} e^{-i t\cdot x} \chi_r(t) \E\left[ \big(t \cdot (\p_x - \p_0)\big)^2 \right] \dd t \preceq \frac{r^{2}}{|x|^{4}}.
\end{equation*}
Summing over $x \in \La$ with $|x| \geq M_r$ yields:
\begin{equation*}
\sum_{x \in \La, |x| \geq M_r} \int_{t \in \R^2} e^{-i t\cdot x} \chi_r(t) \E\left[ \big(t \cdot (\p_x - \p_0)\big)^2 \right] \dd t  \preceq \frac{r}{M_r}.
\end{equation*}
We may thus re-introduce the “missing” terms in the sum defining $B_1$ and write:
\begin{equation}
\label{B1nonmissing}
B_1 = - \hal \sum_{x \in \La}   \int_{t \in \R^2} e^{-i t\cdot x} \chi_r(t) \E\left[ \big(t \cdot (\p_x - \p_0)\big)^2 \right] \dd t + o_r(1),
\end{equation}
with an error term bounded by $\O\left(\frac{r}{M_r}\right)$. 

\paragraph*{The spectral measure(s)}
Expanding the inner product again, we write:
\begin{equation}
\label{B1dev}
 -\frac 12\sum_{x \in \La}  \int_{t \in \R^2} e^{-i t\cdot x} \chi_r(t) \E\left[ \big(t \cdot (\p_x - \p_0)\big)^2 \right] = -\frac 12 \sum_{a,b \in \{1,2\}} \sum_{x \in \La} \E\left[(\p_x - \p_{0})_a (\p_x - \p_{0})_b \right] \hpsiabr(x).
\end{equation}  
\newcommand{\DDs}{\DD^\star}
We introduce the \emph{autocorrelation functions} of the field $(\p_x)_{x \in \La}$ (recall that the perturbations are assumed to be centered):  for $x \in \La$ we let 
\begin{equation*}
G^1(x) := \E[\p_{x,1} \p_{0,1}]\quad \text{ and} \quad G^2(x) := \E[\p_{x,2} \p_{0,2}]
\end{equation*}
as well as 
\begin{equation*}
G^{1,2+}(x) := \E[(\p_{x,1}+\p_{x,2}) (\p_{0,1}+\p_{0,2})]\quad \text{ and} \quad G^{1,2-}(x) := \E[(\p_{x,1}-\p_{x,2})  (\p_{0,1}-\p_{0,2})].
\end{equation*}
Since $(\p_x)_{x \in \La}$ is translation-invariant, it is classical that:
\begin{itemize}
   \item These autocorrelation functions are bounded by their values at $0$.
   \item $G^1$, $G^2$ and $G^{1,2\pm}$ are positive-definite functions on $\La$. In particular, by Bochner's theorem (see \cite[Thm. (4.18)]{folland2016course}), there exist four “spectral measures” $\rho^1,\rho^2, \rho^{1,2\pm} $ which are non-negative finite measures on the Pontryagin dual $\Omega$ of the locally compact abelian group $\La$, such that for all $ x \in \La$ and $m=1,2$
   \begin{equation}
   \label{GasTFrho}
   G^m(x) = \int_{\Omega} e^{- 2i\pi x \cdot \omega} \dd \rho^m(\omega)\quad \text{and} \quad  G^{1,2\pm}(x) = \int_{\Omega} e^{- 2i\pi x \cdot \omega} \dd \rho^{1,2\pm}(\omega).
   \end{equation}
   {Note that the Pontryagin dual of $\La$ is different from, although related to, the \emph{dual lattice} $\LaD$.}
When $\La = \Z^2$, its Pontryagin dual is the two-dimensional torus $\T^2$ which we can identify with $[-\hal, \hal)^2$. In general, we identify $\Omega$ with a copy of \emph{the fundamental domain $\DDs$ of} $\LaD$ centered at the origin (this is convenient for the organization of our computation below, but in no way necessary).
\end{itemize}
\newcommand{\hpsi}{\widehat{\psi}}
\newcommand{\hpsiUU}{\widehat{\psi^{11}_r}}
\newcommand{\hpsiDD}{\widehat{\psi^{22}_r}}
\newcommand{\hpsiUD}{\widehat{\psi^{12}_r}}
\newcommand{\psiUU}{{\psi^{11}_r}}
\newcommand{\psiDD}{{\psi^{22}_r}}
\newcommand{\psiUD}{{\psi^{12}_r}}

We can express the expectations in \eqref{B1dev} using these autocorrelation functions by noting first that for $a \in \{1,2\}$ we have $\E\left[ (\p_x - \p_0)_a^2 \right] = 2 (G^a(0) - G^a(x))$, and that moreover:
\begin{equation*}
 \E\left[(\p_{x} - \p_{0})_1 (\p_{x} - \p_{0})_2 \right]
= \frac 12  \left( G^{1,2+}(0)-G^{1,2+}(x) - G^{1,2-}(0)+G^{1,2-}(x)\right).
\end{equation*}
We may now return to \eqref{B1dev}, use our notation, and write 
\begin{equation*}
-\frac 12 \sum_{a,b \in \{1,2\}} \sum_{x \in \La} \E\left[(\p_x - \p_{0})_a (\p_x - \p_{0})_b \right] \hpsiabr(x) = B_{11} - B_{12},
\end{equation*}
where
\begin{equation*}
B_{11} := \sum_{x \in \La} G^1(x)\hpsiUU(x) +G^2(x) \hpsiDD(x)+\frac{1}{2}(G^{1,2+}(x)-G^{1,2-}(x)) \hpsiUD(x) 
\end{equation*}
and 
\begin{equation*}
B_{12} := \sum_{x \in \La} G^1(0)\hpsiUU(x) +G^2(0)\hpsiDD(x) +\frac{1}{2}(G^{1,2-}(0)-G^{1,2+}(0))\hpsiUD(x).
\end{equation*}

\paragraph*{Controlling $B_{11}$}
Using the spectral measures $\rho^1,\rho^2$ and $\rho^{1,2\pm}$ as in \eqref{GasTFrho}, we can re-write $B_{11}$ as:

\begin{multline*}
B_{11} = \int_{\Omega} \sum_{x \in \La} e^{- 2i\pi x \cdot \omega} \hpsiUU(x) \dd \rho^1(\omega) + \int_{\Omega} \sum_{x \in \La} e^{- 2i\pi x \cdot \omega} \hpsiDD(x) \dd \rho^2(\omega) 
\\
+ \hal \int_{\Omega}  \sum_{x \in \La} e^{- 2i\pi x \cdot \omega} \hpsiUD(x)  \dd (\rho^{1,2+}-\rho^{1,2-})(\omega).
\end{multline*}
Applying Poisson's formula \eqref{Poisson} for all fixed $\omega \in \Omega$, we obtain:
\begin{multline*}
\label{B_11Poi}
B_{11} = (2 \pi)^2 \sum_{k \in \LaD} \Big( \int_{\Omega} \psiUU(2\pi k + 2\pi \omega) \dd \rho^1(\omega) + \int_{\Omega}  \psiDD(2\pi k + 2\pi \omega) \dd \rho^2(\omega) \\ + \frac 12  \int_{\Omega}  \psiUD(2\pi k + 2\pi \omega) \dd (\rho^{1,2+}-\rho^{1,2-})(\omega)\Big).
\end{multline*}
Note that all the $\psi_r^{ab}$ are Schwartz functions, which justifies our interversions of sums over $x \in \La$ and integrals against $ \rho^1$, $\rho^2$, $\rho^{1,2+}$ and $\rho^{1,2-}$ as well as the use of Poisson's formula. We now bound each $\psi_r^{ab}(\cdot)$ by $\chi_r(\cdot) |\cdot |^2$ and get the rough bound:
\begin{equation}
|B_{11}| \preceq  \sum_{k \in \LaD} \int_{\Omega}  \chi_r(2\pi k + 2\pi \omega) |k + \omega|^2 \dd \rho(\omega),
\end{equation}
where $\rho$ is the finite positive measure given by $\rho := \rho^1+\rho^2+\frac 12\rho^{1,2+}+\frac 12\rho^{1,2-}$. Next, we distinguish between the lattice points $k = 0$ and $k \in \LaD \setminus \{0\}$. 
\subparagraph*{For $k \neq 0$.}
{We chose $\Omega$ as a parallelotope (because it is the fundamental domain of some lattice) containing $0$.}
For all $k \in \LaD \setminus \{0\}$, the function $\omega \mapsto k + \omega$ stays away from $0$ on $\Omega$ in the sense that:
\begin{equation*}
\min_{k \in \LaD \setminus \{0\}} \min_{\omega \in \Omega} |k + \omega| \geq c := \dist(0, \partial \Omega) > 0.
\end{equation*}
{(In the case $\La = \LaD = \Z^2$, we are simply saying that if $k \in \Z^2 \setminus \{0\}$ and $u \in [\hal, \hal)^2$, then $|k + u|$ is bounded below by a positive constant).} Thus, since $\chi_r$ is supported on $\B_{2r^{-1}}$, if $r$ is large enough then we have $\chi_r(2\pi k + 2\pi \omega) = 0$ for all $k \in \LaD \setminus \{0\}$ and all $\omega \in \Omega$, and in turn:
\begin{equation*}
\sum_{k \in \LaD \setminus\{0\}} \int_{\Omega} \chi_r(2\pi k + 2\pi \omega) |k + \omega|^2 \dd \rho(\omega) = 0.
\end{equation*}

\newcommand{\Aa}{\mathtt{A}}

\subparagraph*{For $k = 0$.} Denote by $C(r)$ the quantity:
\begin{equation}
\label{def:Cr}
C(r) := \int_{\Omega} \chi_r(2\pi \omega) |\omega|^2 \dd \rho(\omega).
\end{equation}
Let us now justify that $C(r)$ tends to $0$ as $r \to \infty$. The family of functions $\omega \mapsto \chi_r(2\pi \omega) |\omega|^2$ converges pointwise to $0$ on $\Omega$ as $r \to \infty$ because $\chi_r$ converges pointwise to $0$ on $\R^2 \setminus \{0\}$. Moreover, the quantity $\chi_r(2\pi \omega) |\omega|^2 = r^2 |\omega|^2 \chi(2\pi r \omega)$ is bounded uniformly in $r, \omega$ because $\chi$ is compactly supported. Thus by the dominated convergence theorem (this is where we use the fact that the measure $\rho$ is finite), we have:
\begin{equation}
\label{eq:TCD}
\lim_{r \to 0} \int_{\Omega} \chi_r(2\pi \omega) |\omega|^2  \dd \rho(\omega) = 0.
\end{equation}
This is a point where the speed of decay is more difficult to quantify. For example, imposing additional moment conditions on the perturbations does not help. We can however make the following observation:
\begin{claim}
\label{preHUstar}
We have $\lim_{r \to \infty} C(r) = 0$ in such a way that:
\begin{equation*}
\sum_{n \geq 0} C(2^n) < + \infty.
\end{equation*}
\end{claim}
\begin{proof}[Proof of Claim \ref{preHUstar}]
We have by definition of $\chi_r$ as $r^2$ times a cutoff function supported on $B_{2r^{-1}}$:
\begin{equation}
\label{useasymp}
\int_{\Omega} \chi_r(2\pi \omega) |\omega|^2  \dd \rho(\omega) \preceq r^2 \int_{|\omega| \leq \frac{1}{\pi r}} |\omega|^2 \dd \rho(\omega).
\end{equation}
For $j \geq 0$, let $\Aa(j)$ be the annulus $\{2^{-j-1} \leq |\omega| < 2^{-j}\}$. In view of \eqref{useasymp}, we can compare $C(r)$ to the following sum:
\begin{equation*}
C(r) \preceq \frac{1}{r^2} \sum_{j=0}^{\log_2 r -1} 2^{-2j} \rho(\Aa(j)).
\end{equation*}
In particular, if we take $r = 2^n$ and sum over $n$, we obtain, since $\rho$ is a finite measure:
\begin{equation*}
\sum_{n \geq 0} C(2^n) \preceq \sum_{n \geq 0} 2^{-2n} \sum_{j=0}^{n-1} 2^{2j} \rho(\Aa(j))
= \sum_{j = 0}^{+\infty}  2^{2j} \rho(\Aa(j)) \sum_{n \geq j+1} 2^{-2n}  \preceq \sum_{j=0}^{+\infty} \rho(\Aa(j)) < + \infty.
\end{equation*}
\end{proof}

\paragraph*{Handling $B_{12}$}
We are left with the term $B_{12}$. Using Poisson's formula, we get:
\begin{equation*}
B_{12} = (2 \pi)^2  \sum_{k \in \LaD} G^1(0) \psiUU(2 \pi k) + G^2(0) \psiDD(2 \pi k) + \frac 12 (G^{1,2-}(0)-G^{1,2+}(0)) \psiUD(2\pi k),
\end{equation*}
which is $0$ for $r$ large enough. Indeed, the $\psi_r^{ab}$ vanish at $0$ and are supported on $\B_{2r^{-1}}$.
\end{proof}

\paragraph*{Conclusion}
We have obtained that $\lim_{r \to 0} B_1 = 0$. More precisely, for $r$ large enough, $B_1$ reduces to the sum of two error terms: the one from \eqref{B1nonmissing}, which is $\O\left( \frac{r}{M_r} \right)$, and the one corresponding to $C(r)$ as in \eqref{def:Cr}, which is such that $\sum_{n \geq 0} C(2^n) < + \infty$ by Claim \ref{preHUstar}.

\subsection{The error term \texorpdfstring{$B_2$}{B2}}
Recall that:
\begin{equation*}
B_2 : = \sum_{x \in \La \setminus \{0\}, |x| \leq M_r} \int_{t \in \R^2} e^{-i t\cdot x} \chi_r(t) o(|t|^2) \dd t.
\end{equation*}

\begin{lemma}
\label{lem:B2}
We have $B_2 \preceq \left(\frac{M_r}{r}\right)^2 g(2r^{-1})$, with $g$ as in Lemma \ref{lem:Taylor}.
\end{lemma}
\begin{proof}[Proof of Lemma \ref{lem:B2}]
We use a rough triangular inequality and control $B_2$ by:
\begin{equation*}
B_2 = \sum_{x \in \La \setminus \{0\}, |x| \leq M_r} \int_{t \in \R^2} e^{-i t\cdot x} \chi_r(t) o(|t|^2) \dd t \preceq M_r^2 \int_{t \in \R^2} \chi_r(t) |t|^2 g(t) \dd t, 
\end{equation*}
where $g$ is as in Lemma \ref{lem:Taylor}. We conclude using the fact that $\chi_r$ is supported on $\B_{2r^{-1}}$, has mass $1$, and that $g$ is radially non-decreasing.
\end{proof}

\subsection{Conclusion}
Gathering our estimates and returning to \eqref{chihalpha}, we find that:
\begin{equation*}
\int_{\R^2} \chi_r(s) \left(\dd \halpharedPSL(s) - \dd \halpharedPL(s)\right) \preceq \frac{r}{M_r}  + \E\left[ |\p_0|^2  \1_{|\p_0| \geq \frac{1}{4} M_r} \right] + C(r) + \left(\frac{M_r}{r}\right)^2 g(2r^{-1}),
\end{equation*}
where:
\begin{itemize}
\item $M_r \gg r \gg 1$
\item $\E\left[ |\p_0|^2  \1_{|\p_0| \geq \frac{1}{4} M_r} \right] \to 0$ as $r \to 0$
\item $C(r) \to 0$ in such a way that $\sum_{n} C(2^n) < \infty$
\item $g(h) \to 0$ as $h \to 0$
\end{itemize}
To prove Proposition~\ref{prop:L2D2}, it remains to choose $M_r$ such that 
\begin{equation*}
\lim_{r \to \infty} \frac{r}{M_r} = 0, \quad \lim_{r \to \infty} \left(\frac{M_r}{r}\right)^2 g(2r^{-1}) = 0,
\end{equation*}
which can be done by setting e.g.
\begin{equation*}
M_r := r \left( g(2r^{-1}) \right)^{-1/4}.
\end{equation*}

\paragraph{Speed of decay.} We obtain $\halpharedPSL(\B_r)  = o(r^{-2})$ (spectral hyperuniformity). In general, there is no upper bound on the speed of decay - arbitrarily slow decays are possible, see Theorem \ref{theo:slow}, but we if we assume that $|\p_0|$ has a moment of order $2 + \alpha$ with $\alpha > 0$, then we can say the following:
\begin{itemize}
   \item The function $g$ from Lemma \ref{lem:Taylor} can be taken as a $\O(|t|^{\alpha})$.
   \item We can then take $M_r$ as $r^{1+ \alpha'}$ for some $\alpha' > 0$
   \item The error term $\E\left[ |\p_0|^2 \1_{|\p_0| \geq M_r} \right]$ becomes $\O(r^{-\alpha''})$ for some $\alpha'' > 0$.
\end{itemize}
In this case, we obtain
$$
\halpharedPSL(\B_r) \preceq r^{-2 - \alpha'''} + r^{-2} C(r), \text{ with } \sum_{n} C(2^n) < + \infty,
$$
which implies the following “spectral condition” (see \cite{sodin2023random})
\begin{equation*}
\int_{0 \leq |\omega| \leq 1} \frac{1}{|\omega|^2} \dd \halpharedPSL(\omega) < \infty,
\end{equation*}
which in turn is known to imply (\cite[Lemma 4.4]{HuesLeb}) that:
\begin{equation}
\label{thenHUs}
\sum_{n \geq 0} \sigma(2^n) < + \infty.
\end{equation}
Hyperuniform point processes whose rescaled number variance satisfies \eqref{thenHUs} are called “$\star$-hyperuniform” in \cite{HuesLeb}, where it is argued that they form an interesting sub-class (in fact, the overhelming majority). Moreover, $\star$-hyperuniformity is equivalent to finite (regularized) Coulomb energy (see \cite[Theorem 4]{HuesLeb} for further details).

\begin{corollary}\label{cor:starHU}
{Let $\d=2$ and assume that $\E[|\p_0|^{2+\alpha}]<\infty$ for some $\alpha>0$. Then, the resulting perturbed lattice $\PSL$ is $\star$-hyperuniform. In particular, it has finite (regularized) Coulomb energy.}
\end{corollary}

\section{Constructions}
\label{sec:Counter}
\newcommand{\LakL}{\Lambda^{(k)}_N}
\newcommand{\LaL}{\Lambda_N}
\newcommand{\LaUL}{\Lambda^{(1)}_N}
\newcommand{\LaZL}{\Lambda^{(0)}_N}
\newcommand{\pL}{\alpha_N}
\newcommand{\tp}{\tilde{\p}_N}
\newcommand{\CkN}{C^{(k)}_N}
\newcommand{\CZN}{C^{(0)}_N}

For $N \geq 10$ we let $\LaL=\LaL^{(0)}$ be the (half-open) cube $\LaL:= [-\hal N, \hal N)^\d$, and for all $k \in N\Zd$ we let $\LakL := k + \LaZL$, which is a cube of sidelength $N$ centered at $k$.

\subsection{Infinite number variance without \texorpdfstring{$\d$-th moment}{d-th moment}}
\label{sec:Counter2}

We prove here the third item of Theorem \ref{theo:main}.
\begin{proposition}
\label{prop:CEd=2}
Let $\d \geq 1$ and let $X$ be a non-negative random variable with $\E[X^\d] = + \infty$. Then there exists a $\Zd$-invariant perturbation field $(\p_x)_{x \in \Zd}$ such that:
\begin{enumerate}
   \item $|\p_0| \leq X$ almost surely.
   \item The resulting perturbed lattice $\PSL$ has infinite number variance in finite balls.
\end{enumerate}
\end{proposition}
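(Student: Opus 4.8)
The plan is to produce, on a rare event governed by the tail of $X$, a macroscopic pile-up of $\asymp N^{\d}$ lattice points at a single location, and to arrange that this location be the origin with just enough probability that the pile-up contributes infinitely to $\E[|\PSL \cap \BR|^{2}]$. I would first reduce to the case $X < \infty$ almost surely (the case $\P[X = +\infty] > 0$ being easier and handled separately), then set $N_{m} := 2^{m}$ and $E_{m} := \{\sqrt{\d}\,2^{m} \le X < \sqrt{\d}\,2^{m+1}\}$ for $m \ge 1$, so that the $E_{m}$ are pairwise disjoint with $\bigcup_{m} E_{m} = \{X \ge 2\sqrt{\d}\}$.

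On a probability space carrying $X$, a variable $U$ uniform on $[0,1)^{\d}$, and an independent family $(\zeta_{m})_{m \ge 1}$ with $\zeta_{m}$ uniform on $\{-N_{m}/2+1,\dots,N_{m}/2\}^{\d}$ — all mutually independent — consider for each $m$ the tiling of $\Rd$ by the cubes $N_{m}k + \zeta_{m} + [-\tfrac{N_{m}}{2},\tfrac{N_{m}}{2})^{\d}$, $k \in \Zd$, and let $w_{m}(x) \in N_{m}\Zd + \zeta_{m}$ denote the centre of the tile containing $x \in \Zd$; note $w_{m}(0) = \zeta_{m}$. I would define
\begin{equation*}
\p_{x} := \sum_{m \ge 1} \I_{E_{m}}\bigl(w_{m}(x) - x\bigr), \qquad x \in \Zd ,
\end{equation*}
so that on $E_{m}$ every point of a tile is displaced to that tile's centre ($x + \p_{x} = w_{m}(x)$), and $\p \equiv 0$ on $\{X < 2\sqrt{\d}\}$. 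One then checks: $(\p_{x})_{x}$ is a function of $(X,(\zeta_{m})_{m})$, hence independent of $U$; it is $\Zd$-invariant, since translating by $x_{\star} \in \Zd$ amounts to replacing each $\zeta_{m}$ by $\zeta_{m} - x_{\star} \bmod N_{m}$ — which preserves the law of the family $(\zeta_{m})_{m}$ — and leaves $X$ unchanged, so in particular the $\p_{x}$ are identically distributed; on $E_{m}$ one has $|\p_{0}| = |\zeta_{m}| \le \tfrac{\sqrt{\d}}{2}N_{m} \le \sqrt{\d}\,2^{m} \le X$, while $|\p_{0}| = 0 \le X$ off $\bigcup_{m}E_{m}$; and $\PSL$ is a.s.\ locally finite (Lemma \ref{lem:basicppties}) with intensity $1$ (each lattice point is displaced to exactly one point and the perturbations are identically distributed), so that $\E[|\PSL \cap \BR|] = |\BR| < \infty$ for every $r$.

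It remains to bound $\E[|\PSL \cap \BR|^{2}]$ from below. Fixing $r > 0$ and choosing $m_{0}$ with $N_{m} > 2(r + \sqrt{\d})$ for $m \ge m_{0}$: on $E_{m} \cap \{\zeta_{m} = 0\}$ with $m \ge m_{0}$ the tile $[-\tfrac{N_{m}}{2},\tfrac{N_{m}}{2})^{\d}$ has centre $0$ and contains exactly $N_{m}^{\d}$ lattice points, all displaced to $U$; every other tile has centre at distance $\ge N_{m}$ from the origin, so its points are displaced outside $\BR$; hence $|\PSL \cap \BR| = N_{m}^{\d}\,\I\{U \in \BR\}$ exactly. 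Since $E_{m} \in \sigma(X)$, $\P[\zeta_{m} = 0 \mid E_{m}] = N_{m}^{-\d}$, and $\{U \in \BR\}$ is independent of both, the disjointness of the events $E_{m} \cap \{\zeta_{m} = 0\}$ yields
\begin{equation*}
\E\bigl[|\PSL \cap \BR|^{2}\bigr] \ \ge \ \sum_{m \ge m_{0}} N_{m}^{2\d}\,\P[E_{m},\,\zeta_{m} = 0,\,U \in \BR] \ = \ \P[U \in \BR]\sum_{m \ge m_{0}} N_{m}^{\d}\,q_{m} ,
\end{equation*}
with $q_{m} := \P[E_{m}]$. Finally, $\E[X^{\d}] = +\infty$ together with $\E[X^{\d}\I\{X < 2\sqrt{\d}\}] < \infty$ gives $\sum_{m \ge 1}\E[X^{\d}\I_{E_{m}}] = +\infty$, and since $\E[X^{\d}\I_{E_{m}}] \le (\sqrt{\d}\,2^{m+1})^{\d}q_{m} = (2\sqrt{\d})^{\d}N_{m}^{\d}q_{m}$, one gets $\sum_{m} N_{m}^{\d}q_{m} = +\infty$; as $\P[U \in \BR] > 0$ for every $r > 0$, this forces $\E[|\PSL \cap \BR|^{2}] = +\infty$ and hence $\Var[|\PSL \cap \BR|] = +\infty$, for every finite ball.

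The main obstacle is the tension between forcing a pile-up \emph{near the origin} and keeping $(\p_{x})_{x}$ genuinely $\Zd$-invariant: ``the tile containing the origin'' is not a shift-invariant notion. The resolution — collapsing every tile of a \emph{uniformly translated} tiling onto its own centre, and only afterwards conditioning on the origin-tile being centred at $0$ — does both jobs at once: the field stays stationary, while the favourable event retains the probability $N_{m}^{-\d}$, exactly what is needed for the factor $N_{m}^{2\d}$ to overcome the unavoidably small $q_{m}$. The remaining ingredients (the dyadic comparison with $\E[X^{\d}]$ and the elementary moment bounds) are routine.
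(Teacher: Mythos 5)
Your proposal is correct and follows essentially the same construction as the paper's proof in Section \ref{sec:Counter2}: collapse cubes of sidelength comparable to $X$ onto a single site, randomize the tiling by a uniform shift to ensure $\Zd$-invariance, and lower-bound $\E[|\PSL \cap \BR|^2]$ via the rare event (probability $\asymp N^{-\d}$) that the origin's cube collapses into $\BR$, so that the $N^{2\d}$ pile-up contributes $\asymp N^{\d}$, which diverges under the mixture over scales driven by $X$ when $\E[X^\d]=+\infty$. The only deviations — sending each tile to its deterministic centre with a discrete uniform shift $\zeta_m$ (instead of to an independent uniform point per cube with a continuous shift $\tau$) and using dyadic classes $E_m$ of $X$ rather than $\mathcal{N}=\frac{1}{\sqrt{\d}}\floor{X}$ — are cosmetic.
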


\begin{proof}[Proof of Proposition \ref{prop:CEd=2}]
{Let $(\CkN)_{k \in N \Zd}$ be independent random variables such that $\CkN$ is uniformly distributed in $\LakL$ for all $k \in N \Zd$.}
 We first define a “displacement field” (as in Section \ref{sec:PertLatt}) $\tp : \Rd \mapsto \Rd$ by setting, for all $k \in N\Zd$: 
 \begin{equation*}
\tp(x) := \CkN - x, \text{ for all } x \in \LakL,
 \end{equation*}
hence applying $\tp$ corresponds to “sending all the points of the cube $\LakL$ to {$\CkN$}. This displacement field is clearly $N\Zd$-stationary, we now turn it into a $\Rd$-stationary field by choosing a random vector $\tau$ uniformly distributed in $\LaL$ and independent of the $(\CkN)_{k \in N \Zd}$, and setting $\bp_N := \tp(\cdot + \tau)$. Note that by construction, \emph{the size of all displacements is bounded by $\sqrt{\d} N$}. Finally, we set for a {$[0,1]^d$-uniform random variable $U$ (independent of everything else):}
 \begin{equation*}
\PSL'_N := \{x + U + \bp_N(x+U), \ x \in \Zd \}.
 \end{equation*} 
Next, we count the number of points of the corresponding perturbed lattice $\PSL'_N$ falling in the unit ball. 

With probability greater than $\frac{|\B_1|}{(2N)^\d}$ the random variable {$\CZN - \tau$ (which is the difference of two independent random vectors both uniformly distributed in $\LaL^{(0)}$)} is in $\B_1$, and under this event if $y$ is a point in $\LaZL - \tau$, then $\bp_N(y) = \tp(y+\tau) = \CZN - (y + \tau)$, consequently if $x$ is a lattice point in $\Zd$ such that $x+U \in \LaZL - \tau$, then we have: 
\begin{equation*}
x + U + \bp_N(x+U) = x + U + \tp(x+U + \tau) = x + U + {\CZN} - (x + U + \tau) = {\CZN - \tau} \in \B_1.
\end{equation*}
By a simple lattice point counting, and since we take $N \geq 10$, there exists a constant $c$ depending only on the dimension such that for all values of $\tau$ and $U$ there are at least $c N^\d$ points of $\Zd + U$ in the cube $\LaZL - \tau$. In conclusion, under the event $\{\CZN - \tau \in \B_1\}$, the number of points of $\PSL'_N$ in $\B_1$ is larger than $c N^\d$. The second moment of the number of perturbed points falling in $\B_1$ is thus bounded below by:
\begin{equation}
\label{LB_numberB1}
\E[|\PSL'_N \cap \B_1|^2] \geq (c N^\d)^2 \times \P(\{\CZN - \tau \in \B_1\}\}) \geq c' N^{\d}.
\end{equation}
We are now ready to construct our example.

\textit{{Infinite number variance without $\d$-th moment.}}
 If $X$ is a non-negative random variable, we let $\mathcal{N} := \frac{1}{\sqrt{\d}} \floor{X}$ and we define our perturbed lattice as $\PSL := \PSL'_{\mathcal{N}}$, so that $\PSL$ is a mixture of the perturbed lattices $\PSL'_N$ defined above, the parameter $N$ being chosen at random according to $\frac{1}{\sqrt{\d}} \floor{X}$. 

On the one hand, since the perturbations in $\PSL'_N$ are bounded almost surely by $\sqrt{\d}N$ for all $N$, the size of a single perturbation $\p_0$ in $\PSL$ is bounded by $X$ almost surely. On the other hand, using \eqref{LB_numberB1} we see that:
\begin{equation}
\label{LB_numberB1_Full}
\E[|\PSL \cap \B_1|^2] \geq c' \sum_{N \geq 10} N^{\d} \times \P[\mathcal{N} = N], \quad \text{ with } \mathcal{N} = \frac{1}{\sqrt{\d}} \floor{X},
\end{equation}
and thus if $X$ fails to have a finite moment of order $\d$ then certainly the variance of $|\PSL \cap \B_1|$ is infinite.
\end{proof}

\subsection{Arbitrarily slow decay of the rescaled number variance: proof of Theorem \ref{theo:slow}}
\label{sec:arbitraryslowd=2}
The processes that we just used for the proof of Proposition \ref{prop:CEd=2} can also be used to construct hyperuniform processes with arbitrarily slow decay of the rescaled number variance.
\def\tsigma{\tilde{\sigma}}
\begin{proposition}
\label{prop:slowdecay}
Let $\tsigma : (1, + \infty) \to (0, + \infty)$ be a non-increasing function such that $\lim_{r \to \infty} \tsigma(r) = 0$. There exists a $\Zd$-invariant perturbation field $(\p_x)_{x \in \Zd}$ such that:
\begin{enumerate}
   \item The resulting perturbed lattice $\PSL$ is hyperuniform (i.e.\ $\lim_{r \to \infty} \sigma(r) = 0$).
   \item There exist $c > 0, M \geq 1$ (depending on $\d$ and $\sigma$) such that $\PSL$ satisfies, for $r \geq 1$:
\begin{equation}
\label{slowdecay}
\sigma(r) := \frac{\Var[|\PSL \cap \BR|]}{|\BR|} \geq c \tsigma(Mr).
\end{equation}
\end{enumerate}
\end{proposition}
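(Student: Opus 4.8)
The plan is to recycle the family $\PSL'_N$ from the proof of Proposition~\ref{prop:CEd=2}, but now mixing over $N$ with a \emph{deterministic} scale that depends on the radius, rather than over a heavy-tailed random variable. Concretely, I would fix an increasing sequence of "resolutions" $N_1 < N_2 < \cdots$, partition $\Rd$ (after a stationarizing shift) into the cubes $\Lambda^{(k)}_{N_j}$ at scale $N_j$ on a sparse set of "active" regions, and on each active region of scale $N$ collapse all lattice points of that cube to a single uniform point $C^{(k)}_N$, exactly as before. Equivalently, and more cleanly: choose a sequence of scales $N_j \to \infty$ along which the displacement mechanism at scale $N_j$ is "switched on" with small probability $\epsilon_j$, independently in $j$ and independently across the translated cubes; the displacement field is then the superposition of these independent mechanisms, and it remains $\Zd$-invariant after the usual random shift by $\tau$ uniform in a large cube. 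The point is that the scale-$N_j$ mechanism, when active, contributes a number-variance bump of order $N_j^{\d}$ inside balls of radius $\gtrsim N_j$ (by the lower bound \eqref{LB_numberB1}), localized to balls whose radius is comparable to $N_j$; so by tuning $N_j$ and $\epsilon_j$ we can make $\sigma(r)$ decay exactly as slow as prescribed while still tending to $0$.

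For the \textbf{lower bound} \eqref{slowdecay}: given $\tsigma$, I would pick $N_j$ so that $N_{j+1}/N_j \to \infty$ fast and set, for $r$ in the dyadic-type window $[N_j, N_{j+1})$, $\sigma(r) \gtrsim \epsilon_j$, which one arranges by choosing $\epsilon_j \asymp \tsigma(M N_j)$ for a suitable constant $M$. The mechanism: in a ball $\BR$ with $r \asymp N_j$, with probability $\gtrsim \epsilon_j$ there is an active scale-$N_j$ cube near the origin whose collapse point lands in a fixed ball, forcing $\gtrsim N_j^{\d}$ points into $\BR$ (this is precisely the computation leading to \eqref{LB_numberB1}, applied with $N = N_j$ inside $\BR$ rather than $\B_1$); conditioning on (in)activity of that cube and using the law of total variance gives $\Var[|\PSL \cap \BR|] \gtrsim \epsilon_j (1-\epsilon_j) N_j^{2\d} \gtrsim \epsilon_j |\BR|^2 / |\BR| \cdot |\BR|$, hence $\sigma(r) = \Var/|\BR| \gtrsim \epsilon_j \asymp \tsigma(M N_j) \geq \tsigma(M r)$ using monotonicity of $\tsigma$ and $r < N_{j+1}$ together with the choice of $M$. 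One has to be slightly careful that only the scale-$N_j$ mechanism (not the much coarser scales $N_{j'}$, $j' > j$, which are essentially invisible at scale $r$, nor the much finer ones $j' < j$, whose bumps have already decayed) dominates the variance in this window; a union/second-moment bound using the sparsity $N_{j+1} \gg N_j$ handles this.

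For the \textbf{upper bound} (hyperuniformity, part~1): I would show $\sigma(r) \to 0$ by estimating, for $r$ in the window $[N_j, N_{j+1})$, the contribution of each scale separately. Finer scales $N_{j'}$ with $j' < j$: the scale-$N_{j'}$ mechanism creates, within $\BR$, roughly $(r/N_{j'})^{\d}$ active-or-inactive cells each contributing variance $O(\epsilon_{j'} N_{j'}^{2\d})$, so a total of $O(\epsilon_{j'} N_{j'}^{\d} r^{\d})$, i.e. a contribution $O(\epsilon_{j'} N_{j'}^{\d})$ to $\sigma(r)$, which is summable and small since $N_{j'} \leq N_{j-1} \ll r$ and $\epsilon_{j'}\to 0$; here one uses that the cells are independent so cross-terms vanish and variances add. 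Coarser scales $N_{j'}$ with $j' \geq j$: a scale-$N_{j'}$ active cube contributes to $|\PSL \cap \BR|$ only through its "boundary effect" (points entering or leaving $\BR$), which is $O(r^{\d-1} N_{j'})$ or so in magnitude but occurs with probability $\epsilon_{j'}$ and there are $O(1)$ relevant such cubes; choosing $N_{j'} \gg r$ and controlling this by $\epsilon_{j'} \to 0$ makes it negligible after dividing by $|\BR|\asymp r^{\d}$. Summing the (finitely many effectively-contributing) scales and the two tails gives $\sigma(r) = o(1)$.

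The \textbf{main obstacle} I anticipate is the bookkeeping in the upper bound: making the "only nearby scales matter" heuristic rigorous requires a careful decomposition of $|\PSL \cap \BR|$ into independent scale-by-scale contributions and a uniform-in-$r$ control of the coarse-scale boundary terms, so that the sum over all scales is genuinely $o(1)$ rather than merely bounded. In particular one must choose the lacunary sequence $(N_j)$ and the probabilities $(\epsilon_j)$ jointly so that: (a) $\epsilon_j \to 0$ (for hyperuniformity), (b) $\epsilon_j \gtrsim \tsigma(MN_j)$ with $N_{j+1}$ large enough that the windows $[N_j,N_{j+1})$ cover $[1,\infty)$ and monotonicity of $\tsigma$ transfers the bound from $N_j$ to all $r$ in the window, and (c) the fine-scale tail $\sum_{j'<j}\epsilon_{j'}N_{j'}^{\d}/N_j^{\d}$ and the coarse-scale tail both go to $0$; a diagonal choice like $N_{j+1} = \exp(N_j)$ and $\epsilon_j = \tsigma(MN_j)$ should comfortably satisfy all three, but verifying (c) is where the real work lies. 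The displacement size is automatically bounded by $\sqrt{\d}\,\sup_j N_j \cdot \mathbf 1[\text{scale }N_j\text{ active at }0]$, which is finite a.s. since at the origin only finitely many... — in fact one should make the mechanisms active at disjoint spatial scales so that a.s. each point is moved by only finitely many of them, or simply note we do not need bounded perturbations here, only that the construction is well-defined; either way the perturbation field is a genuine $\Zd$-invariant field and the resulting $\PSL$ is a stationary point process by Lemma~\ref{lem:basicppties}.
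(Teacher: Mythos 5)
There is a genuine gap, and it is quantitative rather than cosmetic: your choice $\epsilon_j \asymp \tsigma(M N_j)$ is off by a factor $N_j^{-\d}$, and with it the construction is not hyperuniform. Your own lower-bound computation shows this: at $r \asymp N_j$ the ``gain'' event (active scale-$N_j$ cube whose collapse point lands in $\BR$) has probability $\asymp \epsilon_j$ and shifts the count by $\asymp N_j^{\d} \asymp |\BR|$, so $\Var[|\PSL \cap \BR|] \gtrsim \epsilon_j N_j^{2\d}$ and hence $\sigma(r) \gtrsim \epsilon_j N_j^{\d} = \tsigma(M N_j)\, N_j^{\d}$, which tends to $+\infty$ whenever $\tsigma$ decays slower than $r^{-\d}$ --- precisely the regime the proposition is about. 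You quietly weakened this to ``$\sigma(r) \gtrsim \epsilon_j$'' for the lower bound, but the same term is an obstruction to part 1, and your upper-bound sketch misses it: for coarse scales $N_{j'} \gg r$ you only account for a boundary effect of size $\O(r^{\d-1} N_{j'})$, whereas the dominant contribution is the rare event that the collapse point of the coarse active cube containing $\BR$ falls inside $\BR$, dumping $N_{j'}^{\d}$ points there and contributing $\asymp \epsilon_{j'} N_{j'}^{\d}$ to $\sigma(r)$. (This is exactly the term the paper controls via $\Var[|\PSL'_N \cap \BR|] = \O(r^{\d} N^{\d})$ for $N \geq r$, weighted by $\alpha_N \propto N^{-\d}$.) Two further problems: with a lacunary sequence $N_{j+1} \gg N_j$ your lower bound is only justified for $r \asymp N_j$; deep inside a window $[N_j, N_{j+1})$ the scale-$N_j$ effect decays like $\epsilon_j N_j^{\d+1}/r$ and the scale-$N_{j+1}$ effect only gives $\asymp \epsilon_{j+1} N_{j+1}^{\d}$, and monotonicity of $\tsigma$ goes the wrong way ($\tsigma(Mr) \geq \tsigma(M N_{j+1})$), so \eqref{slowdecay} is not established for all $r \geq 1$. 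Also, the ``superposition'' of collapse mechanisms is not well defined when several scales are simultaneously active at a point, and a.s.\ finiteness of the displacement needs $\sum_j \epsilon_j < \infty$, which your choice of $\epsilon_j$ does not guarantee.

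The paper avoids all of this by taking a \emph{single} random scale: $\PSL := \PSL'_{\mathcal N}$ with $\P[\mathcal N = N] = \alpha_N \propto \big(\tsigma(N) - \tsigma(N+1)\big)/N^{\d}$. The lower bound is then $\sigma(r) \gtrsim \sum_{N \geq Mr} \alpha_N N^{\d}$, which telescopes to $c\,\tsigma(Mr)$ uniformly in $r$ (no windows, no gaps), and hyperuniformity follows either from the finite $\d$-moment criterion (for $\d = 1,2$) or from the two elementary variance bounds $\Var[|\PSL'_N \cap \BR|] = \O(r^{\d} N^{\d})$ for $N \geq r$ and $\O(r^{\d-1} N^{\d+1})$ for $N \leq r$, summed against $\alpha_N$. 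Your multi-scale superposition could probably be repaired --- take $\epsilon_j \asymp \tsigma(M N_j)\, N_j^{-\d}$ (restoring summability and well-definedness), derive the lower bound on the whole window from the \emph{next} coarse scale's ``rare big gain'' event rather than from scale $N_j$, and choose $N_j$ growing fast enough that $\sum_j \tsigma(M N_j) < \infty$ --- but at that point you have essentially rediscovered the mixture weights $\alpha_N \propto \tsigma'_N/N^{\d}$ in a more laborious form.
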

\begin{proof}[Proof of Proposition \ref{prop:slowdecay}]
The letters $c, c', c''$... denote small constants that may change from line to line.

We use the same family of processes $\{\PSL'_N\}_{N \geq 1}$ as in the proof of Proposition~\ref{prop:CEd=2}. With a similar argument as the one leading to \eqref{LB_numberB1}, we find that if $r \geq 1$ and $N \geq 10r$, we have:
\begin{equation*}
\E[|\PSL'_N \cap \BR|^2] - |\BR|^2 \geq c N^{2\d} \times \P(\{\CZN - \tau \in \BR\}\}) - |\BR|^2 \geq c' N^{\d} r^\d - |\B_1|^2 r^{2\d},
\end{equation*}
because $\P(\{{\CZN - \tau} \in \B_r\})$ is larger than $c \frac{r^\d}{N^\d}$ {and under this event, all the $N^\d$ points of $\LaL - \tau$ will “jump” inside $\BR$}. Taking $M$ large enough (depending only on $\d$) we thus see that for $N \geq Mr$, we have:
\begin{equation}
\label{varBr}
\Var[|\PSL'_N \cap \BR|] \geq c'' N^\d r^\d.
\end{equation}
Now let us consider a mixture of the $\{\PSL'_N\}_{N \geq 1}$ of the form $\PSL := \sum_{N \geq 1} \alpha_N \PSL'_N$, with coefficients $\alpha_N$ chosen as:
\begin{equation*}
\alpha_N := \frac{1}{S} \frac{-\tsigma '_N}{N^\d}, \quad \tsigma '_N := \tsigma(N+1) - \tsigma(N), \quad S := \sum_{N \geq 1} \frac{-\tsigma '_N}{N^\d}.
\end{equation*}
{(Note that thanks to a summation by parts, the series $\sum_{N \geq 1} \frac{-\tsigma'_N}{N^\d}$ has the same nature as $\sum_{N \geq 1} \frac{\tsigma_N}{N^{\d +1}}$, which converges for all $\d \geq 1$ because $(\tsigma_N)_N$ is bounded, and thus $S$ is well-defined.)} Using \eqref{varBr} and computing a telescopic sum, we obtain:
\begin{equation}
\label{lowsharp1d}
\frac{\Var[|\PSL \cap \BR|]}{|\BR|}  \geq c \sum_{N \geq Mr} \alpha_N N^\d = \frac{c}{S} \sum_{N \geq M r} - \tsigma'_N \geq c \tsigma(Mr),
\end{equation}
for some positive constant $c$, which gives \eqref{slowdecay}. It remains to prove that $\PSL$ is hyperuniform. 

For $\d = 1, 2$ we can compute, using again a telescopic sum, the $\d$-th moment of the perturbations:
\begin{equation*}
\sum_{N \geq 1} \alpha_N \times N^\d = \frac{1}{S} \times \sum_{N \geq 1} -\tsigma'_N < + \infty,
\end{equation*}
and thus (by our main result) $\PSL$ is indeed hyperuniform. {This condition is not sufficient in dimension $\d \geq 3$, but we can instead directly find an upper bound on the number variance. Let us return to the “building block” $\PSL'_N$ and observe that:
\begin{itemize}
   \item If $N \geq r$, then $\Var[|\PSL'_N \cap \BR|] = \O(r^{\d} N^\d)$. Indeed, with probability $\O\left(\frac{r^\d}{N^\d}\right)$ the ball $\BR$ receives $N^\d$ points, and it loses all of its $\O(r^\d)$ points with probability $1 - \O\left(\frac{r^\d}{N^\d}\right)$.
   \item If $r \geq N$ we have:
\begin{equation*}
\Var[|\PSL'_N \cap \BR|] \preceq \frac{r^{\d-1}}{N^{\d-1}} \times N^{2\d} = \O\left(r^{\d-1} N^{\d+1}\right).
\end{equation*}
Indeed, there are $\O\left(\frac{r^{\d-1}}{N^{\d-1}}\right)$ hypercubes of sidelength $N$ intersecting the hypersphere $\partial \BR$, and those are the only ones contributing (independently) to the variance, each one with a contribution $\O(N^{2\d})$. 
\end{itemize}
Summing over $N$, we obtain:
\begin{equation*}
\frac{\Var[|\PSL \cap \BR|]}{|\BR|} \preceq  \frac{1}{r} \sum_{N \leq r} \alpha_N N^{\d+1} +  \sum_{N \geq r} \alpha_N N^\d = \frac{1}{r} \sum_{N \leq r} - \tsigma'_N N + \sum_{N \geq r} - \tsigma'_N 
\preceq \frac{1}{r} \sum_{N \leq r} \tsigma_N + \tsigma(r) \to 0,
\end{equation*}
so the process that we constructed is indeed hyperuniform.
}
\end{proof}

\begin{remark}
\label{rem:sharpd1}
For $\d = 1$, using the same construction with $\alpha_N$ of order $N^{-3 + \frac{\epsilon}{2}}$ for some $\epsilon \in (0,1)$ gives a perturbation such that:
\begin{itemize}
\item $\sum_{N \geq 1} \alpha_N N^{2-\epsilon} < + \infty$, hence $\mathbb{E}[|\p_0|^{2-\epsilon}]$ is finite.
\item In view of the first inequality in \eqref{lowsharp1d}, 
\begin{equation*}
\frac{\Var[|\PSL \cap \BR|]}{|\BR|}  \geq c \sum_{N \geq Mr} \alpha_N N = c' \sum_{N \geq Mr} N^{-2 + \frac{\epsilon}{2}} \geq c'' r^{-1 + \frac{\epsilon}{2}}
\end{equation*}
which means that the resulting perturbed lattice is \emph{not} type-I hyperuniform.
\end{itemize}
This confirms our “sharpness” claim in the first item of Theorem \ref{theo:classI}.
\end{remark}

\subsection{Arbitrarily small perturbations with large number variance}
We now prove the fourth item of Theorem \ref{theo:main} as well as the second item of Theorem \ref{theo:classI}.
\label{sec:Counter3+}
\begin{proposition}
\label{prop:bounded}
Let $\epsilon  >0$. 
\begin{itemize}
   \item If $\d \geq 3$, there are perturbations of $\Zd$ that are almost surely bounded by~$\epsilon$ and such that 
   \begin{equation*}
   \lim_{r \to + \infty} \frac{\Var\left[|\PSL \cap \BR|\right]}{|\BR|} = + \infty
   \end{equation*}
   \item If $\d = 2$, there are perturbations of $\Z^2$ that are almost surely bounded by $\epsilon$ and such that $\PSL$ is not hyperuniform of class I. 
%    {In fact, if $\tsigma : (1, + \infty) \to (0, + \infty)$ is an non-increasing function such that $\lim_{r \to \infty} \tsigma(r) = 0$, there exists a $\Z^2$-invariant perturbation field $(\p_x)_{x \in \Z^2}$ such that:
% \begin{enumerate}
%    \item The perturbations are almost surely bounded by~$\epsilon$.
%    \item The resulting perturbation $\PSL$ of $\Z^2$ is hyperuniform.
%    \item There exists $c > 0$ (depending $\sigma$) and $M \geq 1$ such that the rescaled number variance of $\PSL$ satisfies the “slow decay” estimate \eqref{slowdecay} for $r \geq 1$.
% \end{enumerate}
% }
\end{itemize}
\end{proposition}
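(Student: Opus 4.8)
The plan is to handle both items with one construction: a \emph{mixture}, over a range of block-sizes $N$, of ``small block-translation'' perturbed lattices, in the spirit of Propositions~\ref{prop:CEd=2} and~\ref{prop:slowdecay} but with the large block-collapses replaced by translations of size $\epsilon$, so that every perturbation stays of norm at most $\epsilon$. For an integer $N \geq 10$, define the building block $\PSL'_N$ via the displacement-field formulation of Section~\ref{sec:PertLatt}: start from $\Zd + U$ with $U$ uniform in $[0,1)^\d$; tile $\Rd$ by the randomly shifted grid of cubes $Q_k := k + \tau + [-\hal N, \hal N)^\d$, $k \in N\Zd$, with $\tau$ uniform in $[-\hal N, \hal N)^\d$ and independent of $U$; attach to each cube an independent Rademacher sign $\xi_k \in \{-1,+1\}$; and translate \emph{every} point falling in $Q_k$ by the common vector $\epsilon\,\xi_k\, e_1$. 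Exactly as in the proof of Proposition~\ref{prop:CEd=2}, the random shift $\tau$ turns this into a genuine $\Rd$-stationary (hence $\Zd$-invariant) displacement field; each displacement has norm $\epsilon$; and the intensity equals $1$, so $\E[|\PSL'_N \cap \BR|] = |\BR|$ for all $r > 0$.

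The crux is a lower bound: there is $c > 0$ depending only on $\d$ such that, for $10 \leq N \leq r/10$,
\begin{equation*}
\Var\bigl[|\PSL'_N \cap \BR|\bigr] \ \geq\ c\, \epsilon^2\, r^{\d-1}\, N^{\d-1}.
\end{equation*}
To prove it, write $|\PSL'_N \cap \BR| = |(\Zd + U) \cap \BR| + \sum_{k \in N\Zd} f_k$, where $f_k = f_k(U,\tau,\xi_k)$ is the net change, inside $\BR$, of the number of points of the $k$-th cube. Conditionally on $(U,\tau)$ the signs $\xi_k$ are independent, and $|(\Zd+U) \cap \BR|$ does not depend on them, so $\Var[\,|\PSL'_N \cap \BR| \mid U,\tau\,] = \sum_k \Var[f_k \mid U,\tau] = \sum_k \tfrac14 (f_k^+ - f_k^-)^2$, with $f_k^\pm$ the value of $f_k$ when $\xi_k = \pm1$. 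A flux / layer-cake computation identifies $f_k^+ - f_k^-$, up to sign, with the number of points of $\Zd + U$ lying in $Q_k$ and in the thin crescent $(\BR + \epsilon e_1) \setminus (\BR - \epsilon e_1)$; when $Q_k$ straddles $\partial \BR$ near one of the poles $\pm r e_1$ --- where, at the scale $N \ll r$, $\partial \BR$ is close to the flat hyperplane $\{ y_1 = \pm r\}$ of normal $\pm e_1$ --- that crescent has volume of order $\epsilon N^{\d-1}$, and elementary lattice-point counting (averaged over $U$) gives $|f_k^+ - f_k^-| \geq c\, \epsilon N^{\d-1}$. Since there are at least $c (r/N)^{\d-1}$ cells $k$ whose nominal center lies within distance $N$ of the polar caps $\{ y \in \partial \BR : |y_1| \geq r/2 \}$, and each such cell is ``well-positioned'' with probability bounded below over $\tau$, taking expectations yields $\Var[|\PSL'_N \cap \BR|] \geq \sum_k \E[\tfrac14 (f_k^+ - f_k^-)^2] \geq c\, (r/N)^{\d-1} \epsilon^2 N^{2(\d-1)} = c\, \epsilon^2 r^{\d-1} N^{\d-1}$.

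Now fix $\beta$ with $1 < \beta < \d - 1$ if $\d \geq 3$, and $1 < \beta < 2$ if $\d = 2$; set $\alpha_N := N^{-\beta} / \sum_{M \geq 10} M^{-\beta}$ (a finite normalization since $\beta > 1$), and let $\PSL$ be the perturbed lattice obtained by drawing the scale $N \geq 10$ with probability $\alpha_N$ and then running $\PSL'_N$. This is again a $\Zd$-invariant perturbation field with $|\p_0| \leq \epsilon$ almost surely, and since every $\PSL'_N$ has conditional expectation $|\BR|$ the cross term vanishes, so
\begin{equation*}
\Var\bigl[|\PSL \cap \BR|\bigr] = \sum_{N \geq 10} \alpha_N \Var\bigl[|\PSL'_N \cap \BR|\bigr] \ \geq\ c\, \epsilon^2\, r^{\d-1} \sum_{10 \leq N \leq r/10} \alpha_N N^{\d-1} \ \geq\ c'\, \epsilon^2\, r^{\d-1}\, r^{\d - \beta},
\end{equation*}
using $\sum_{10 \leq N \leq r/10} N^{\d-1-\beta} \geq c\, r^{\d - \beta}$, valid because $\d - 1 - \beta > -1$. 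Dividing by $|\BR| = |\B_1|\, r^\d$ gives $\sigma(r) \geq c\, \epsilon^2 r^{\d - 1 - \beta}$: for $\d \geq 3$ and $\beta < \d - 1$ the exponent is positive, hence $\sigma(r) \to + \infty$, which is the first item; for $\d = 2$ we get $\Var[|\PSL \cap \BR|] \geq c\, \epsilon^2 r^{3 - \beta}$ with $3 - \beta > 1$, so the number variance is not $\O(r) = \O(r^{\d-1})$ and $\PSL$ is not class-I hyperuniform, which is the second item.

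The step I expect to be the main obstacle is the lower bound above, namely making the flux heuristic rigorous: one must isolate the right family of ``good'' cells near the polar caps, verify that $(\BR + \epsilon e_1) \setminus (\BR - \epsilon e_1)$ restricted to such a cell has volume at least a constant times $\epsilon N^{\d-1}$ (this is where $N \ll r$ is used, so that the curvature of $\partial \BR$, which contributes an error of order $N^{\d+1}\epsilon / r$, is negligible at scale $N$), and then pass from Lebesgue measure to the $(\Zd + U)$-point count with an error that survives the averaging over $U$. All the rest --- stationarity and boundedness of the mixture, the conditional-independence decomposition of the variance, the summation over scales --- is routine and parallel to Sections~\ref{sec:Counter2}--\ref{sec:arbitraryslowd=2}.
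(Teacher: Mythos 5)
Your proposal is correct, but it takes a genuinely different route from the paper's. The paper's construction uses \emph{huge} blocks ($N \geq r$) and a radial push: every point of a cube of side $N$ is moved by $\epsilon$ away from a uniformly chosen centre, and on the rare event (probability $\sim (r/N)^{2\d}$) that this centre and the grid shift both land within $r/10$ of the origin, \emph{all} lattice points within $\epsilon/4$ of $\partial \B_r$ exit the ball (Claim \ref{claim:quiquestla}); Landau's estimate \eqref{eq:Landau} turns this into a deficit $\geq c \epsilon r^{\d-1}$, and mixing over $N$ with weights $\propto N^{-1-\delta}$ gives $\sigma(r) \gtrsim r^{\d-2-\delta}$. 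You instead use \emph{small} blocks ($N \leq r/10$) carrying independent $\pm \epsilon e_1$ translations, so the fluctuation is not created by one rare global event but accumulates over the $\sim (r/N)^{\d-1}$ blocks straddling the polar caps: conditioning on $(U,\tau)$ and using independence of the signs yields the orthogonal decomposition $\Var \geq \E \sum_k \tfrac14 (f_k^+ - f_k^-)^2$, the per-block term is bounded below by the squared volume of the crescent piece inside the block, i.e.\ $c\,\epsilon^2 N^{2(\d-1)}$, and the mixture with $\alpha_N \propto N^{-\beta}$, $\beta \in (1,\d-1)$ (resp.\ $(1,2)$ for $\d=2$), gives $\sigma(r) \gtrsim r^{\d-1-\beta}$ — the same strength of conclusion as the paper. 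What your route buys: no Landau-type lattice-count theorem is needed (only $\E_U[\#((\Zd+U)\cap A)] = |A|$ plus Jensen), and the bound has a transparent ``surface noise through the polar caps'' interpretation; what the paper's buys: its geometric step is a pointwise, two-line computation per realization, whereas your key lemma (the crescent/flux bound) still has to be written out with some care about block positions and curvature. One caution on how to phrase that step: the pointwise inequality $|f_k^+ - f_k^-| \geq c\,\epsilon N^{\d-1}$ is false for $\epsilon < 1$ (a slab of thickness $\epsilon$ may contain no point of $\Zd + U$ for many $U$); the correct and sufficient statement is the averaged one you hint at, namely $\E_U\bigl[(f_k^+ - f_k^-)^2\bigr] \geq \bigl(\E_U[f_k^+ - f_k^-]\bigr)^2$, which equals the squared crescent volume — and this requires observing that within a single block near a cap $\{|y_1| \geq r/2\}$ (here $N \leq r/10$ is used) the difference $f_k^+ - f_k^-$ is one-signed, so no cancellation occurs in the $U$-average. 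With that reading, your argument goes through.
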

\begin{proof}
Let $N \geq 10$ and let $(\CkN)_{k \in N \Zd}$ be independent random variables such that $\CkN$ is uniformly distributed in $\LakL$ for all $k \in N \Zd$. We define a first “displacement field” $\tp : \Rd \mapsto \Rd$ by setting:
\begin{equation*}
\tp(x) := \epsilon \frac{x-\CkN}{|x-\CkN|} \text{ for all $k \in N\Zd$ and all $x$ in $\LakL$. }
\end{equation*}
Applying $\tp$ corresponds to choosing a center $\CkN$ uniformly at random in each cube $\LakL$ and then push every point in $\LakL$ by an amount $\epsilon$ away from $\CkN$. This displacement field is clearly $N\Zd$-stationary, and we can turn it into a $\Rd$-stationary field by choosing a random vector $\tau$ uniformly distributed in $\LaL$ and independent of $\tp$ and setting $\bp_N := \tp(\cdot + \tau)$. Perturbations are bounded by $\epsilon$. Finally, we set for a {$[0,1]^\d$-uniform random variable $U$ (idependent of everything else):}  
\begin{equation*}
\PSL'_N := \{x + U + \bp_N(x+U), \ x \in \Zd \}.
\end{equation*} 
Next, let $r$ be such that ${N \geq r}$. We want to count the number of points of $\PSL'_N$ falling in $\B_{r}$, and we will show that a significant amount of the original lattice points are “leaving” the ball with positive probability, creating a {very} large {number} variance if $\d \geq 3$ {and a critical number variance if $\d =2$}. We rely on the following elementary computation.
\begin{claim}
\label{claim:elementarygeom}
Let $r >0 , x, u, \tau, C \in \Rd$ be such that:
\begin{equation*}
\frac{9}{10}r \leq |x+u| \leq \frac{11}{10}r, \quad |\tau - C| \leq \frac{1}{10}r.
\end{equation*}
Then we have:
\begin{equation}
\label{rayongrandit}
\left|x + u + \epsilon \frac{x + u + \tau - C}{|x + u + \tau - C|}\right| \geq |x+u| + \frac{\epsilon}{3}.
\end{equation}
\end{claim}
\begin{proof}[Proof of Claim \ref{claim:elementarygeom}]
We simply compute:
\begin{equation*}
\left|x + u + \epsilon \frac{x + u + \tau - C}{|x + u + \tau - C|}\right|^2 = |x + u|^2 + \epsilon^2 + 2 \epsilon (x+u) \cdot \frac{x + u + \tau - C}{|x + u + \tau - C|}.
\end{equation*}  
With our assumptions, we know that $\frac{8}{9} |x+u| \leq |x+u + \tau - C| \leq \frac{10}{9} |x+u|$, and thus:
\begin{multline*}
(x+u) \cdot \frac{x + u + \tau - C}{|x + u + \tau - C|} = |x+u|^2 \frac{1}{|x + u + \tau - C|} + (x+u) \cdot (\tau -C) \frac{1}{|x + u + \tau - C|} \\
\geq |x+u|^2 \times \frac{9}{10 |x+u|}  - |x+u| \times \frac{1}{10} r \times \frac{9}{8|x+u|}
 \geq \frac{9}{10} \times \frac{9}{10} r - \frac{9}{80} r \geq \hal r.
\end{multline*}
As a consequence, we see that (using again $|x+u| \leq \frac{11}{10}r$):
\begin{equation*}
\left|x + u + \epsilon \frac{x + u + \tau - C}{|x + u + \tau - C|}\right|^2  \geq |x + u|^2 + \epsilon r + \epsilon^2 \geq |x+u|^2 + 2 \frac{10}{2 \times 11} |x+u| \epsilon + \epsilon^2 \geq \left(|x+u| + \frac{\epsilon}{3} \right)^2, 
\end{equation*}
which proves the claim.
 \end{proof}
Since $\tau$ and $\CZN$ are uniformly distributed in $\LaL$ and independent, there exists $c > 0$ depending only on the dimension, such that:
\begin{equation}
\label{probatauCZNgood}
\P\left(  \left\lbrace |\tau| \leq \frac{r}{10} \right\rbrace \cap \left\lbrace \left|\tau - \CZN\right| \leq \frac{r}{10} \right\rbrace  \right) \geq c {\frac{r^{2\d}}{N^{2\d}}}.
\end{equation}
This event is interesting for us due to the following observation:
\begin{claim}
\label{claim:quiquestla}
On the event $\left\lbrace |\tau| \leq \frac{r}{10} \right\rbrace \cap \left\lbrace \left|\tau - \CZN\right| \leq \frac{r}{10} \right\rbrace$, for all $u \in [0,1]^\d$, if 
$\frac{9}{10}r \leq |x+u| \leq \frac{11}{10}r$ then $|x+u + \bp(x+u)| \geq |x+u| + \frac{\epsilon}{3}$.

In particular, if $x$ is such that $|x+u| \geq r - \frac{\epsilon}{4}$ then $x + u + \bp(x+u) \notin \BR$.
\end{claim}
\begin{proof}[Proof of Claim \ref{claim:quiquestla}]
 If $|x+u| \leq \frac{11}{10r}$ then $|x + u + \tau| \leq \frac{12}{10}r$, which means that $x+u+\tau \in \LaZL$ (because $N \geq 5r$) and thus, by construction of the displacement field:
\begin{equation*}
\bp(x+u) = \tp(x+u + \tau) = \epsilon \frac{x+u+\tau-\CZN}{|x+u+\tau-\CZN|}.
\end{equation*}
We may then apply Claim \ref{claim:elementarygeom} with $C = \CZN$, which yields $|x+u + \bp(x+u)| \geq |x+u| + \frac{\epsilon}{3}$. Next, let $x$ be such that $|x+u| \geq r - \frac{\epsilon}{4}$.
\begin{enumerate}
   \item If $|x+u| \geq \frac{11}{10}r$ then $x+u + \bp(x+u)$ cannot be in $\BR$ because perturbations are bounded by $\epsilon < \frac{1}{10}r$.
   \item If $|x+u| \leq \frac{11}{10}r$, then since by assumption $|x+u| \geq r - \frac{\epsilon}{4} \geq \frac{9}{10}r$ we get from above that:
   \begin{equation*}
   |x + u + \bp(x+u)| \geq |x+u| + \frac{\epsilon}{3} \geq r - \frac{\epsilon}{4} + \frac{\epsilon}{3} > r
   \end{equation*}
   and thus $x + u + \bp(x+u)$ is not in $\BR$.
\end{enumerate}
\end{proof}

From Claim \ref{claim:quiquestla} we deduce that on the event $\left\lbrace |\tau| \leq \frac{r}{10} \right\rbrace \cap \left\lbrace \left|\tau - \CZN\right| \leq \frac{r}{10} \right\rbrace$ we have:
\begin{equation}
\label{eq:condiU}
\left|\PSL'_N \cap \BR\right| \leq \left|\left\lbrace x \in \Zd, |x+U| \leq r - \frac{\epsilon}{4}\right\rbrace \right| \text{ a.s.}
\end{equation}
We are thus left with finding an upper bound for the number of integer points in the ball $\B_{r - \frac{\epsilon}{4}}(-U)$, which we expect from the continuous approximation to be of order $|\BR| - C r^{\d-1} \epsilon$, and thus significantly smaller than $|\BR|$. Although the quest for sharp bounds seems to still be an active area of research, “basic” estimates are sufficient for us and have thankfully been known for a long time - as the classical result of Landau \cite{landau1915analytischen}, which gives:
\begin{equation}
\label{eq:Landau}
\sup_{a \in \Rd} \left| \left| \left\lbrace x \in \Zd, x \in \BR(a) \right\rbrace  \right| - |\BR| \right| \leq C_{\d} r^{\frac{\d(\d-1)}{\d+1}} \ll r^{\d-1},
\end{equation}
so in particular it ensures that for all $u\in [0,1]^d$ we have:
\begin{equation}
\label{appliLandau}
\left|\left\lbrace x \in \Zd, |x+u| \leq r - \frac{\epsilon}{4}\right\rbrace \right| = |\B_{r-\frac{\epsilon}{4}}| + o(r^{\d-1}) \leq |\BR| - C_\d r^{\d-1} \epsilon + o(r^{\d-1}),
\end{equation}
{so in particular $|\BR| - \left|\PSL'_N \cap \BR\right| \geq C'_\d r^{\d-1} \epsilon$.}
From this {and our lower bound \eqref{probatauCZNgood} on the probability of} $\left\lbrace |\tau| \leq \frac{r}{10} \right\rbrace \cap \left\lbrace \left|\tau - \CZN\right| \leq \frac{r}{10} \right\rbrace$  we deduce: 
\begin{equation}
\label{eq:rNbons}
\text{For $r, N$ large enough {with $N \geq r$},} \quad \E\left[\left||\BR| - |\PSL'_N \cap \BR|\right|^2\right] \geq c_\d r^{2\d -2} {\times \frac{r^{2\d}}{N^{2\d}}},
\end{equation}
for some positive constant $c_\d$ depending on the dimension.  We are now ready to construct our examples.

\vspace{0.1cm}

\textit{Non-hyperuniformity for $\d \geq 3$.} If $\d \geq 3$, fix some integer-valued random variable $\mathcal{N}$ with $\P[\mathcal{N} = N]$ proportional to $\frac{1}{N^{1 + \delta}}$ for $N \geq 1$ (where $\delta \in (0,1)$ is arbitrary), and set $\PSL := \PSL'_{\mathcal{N}}$ i.e.\ $\PSL$ is a mixture of the perturbed lattices $\PSL'_N$ defined above, the parameter $N$ being chosen at random. For any $r$, a lower bound on the number variance in $\BR$ is given by:
\begin{equation}
\label{lowerboundPSLBRepsilon}
\E\left[\left||\PSL \cap \BR| - |\BR|\right|^2\right]  \geq \sum_{N \geq r}  \E\left[\left||\PSL'_N \cap \BR| - |\BR|\right|^2\right] \P[\mathcal{N} = N].
\end{equation}
From \eqref{eq:rNbons} we deduce that, for $r$ large enough (depending on $\d$):
\begin{equation*}
\frac{\Var[|\PSL \cap \BR|]}{|\BR|} \geq c_\d \frac{r^{2\d - 2}}{r^\d} \times \sum_{5r \geq N \geq r} \frac{1}{N^{1+\delta}},
\end{equation*}
which after a simple computation is seen to be greater than $r^{2 \d - 2 - \d - \delta}$, which is $\gg 1$ for all $\d \geq 3$.

\vspace{0.1cm}
\textit{Non-class-I hyperuniformity for $\d = 2$.}
The same construction for $\d = 2$ yields $\frac{\Var[|\PSL \cap \BR|]}{|\BR|} \geq r^{-\delta}$ where $\delta > 0$ can be arbitrary small, hence this process is not class I hyperuniform. 

{Note that one could go further and consider coefficients $\P[\mathcal{N} = N]$ proportional to $\frac{1}{N \log^{1+\delta}N}$, which would yield a decay of the rescaled number variance slower than any power law.}
\end{proof}

\appendix

\section{The case of dimension 1}
\subsection{Expressing the number variance in balls}
\subsubsection*{An auxiliary function}
\label{sec:jr}
For $r > 0$, a classical computation shows that the Fourier transform of the indicator function of a ball of radius $r$ is given by: $\widehat{\1_{\BR}} = \frac{\Jd(r |t|)}{|t|^{\frac{\d}{2}}} r^{\frac{\d}{2}}$, where $\Jd$ is a Bessel function of the first kind (whose definition is not needed here). As a consequence, we have $\frac{\widehat{\1_{\BR} \ast \1_{\BR}}}{|\BR|} = C_\d \frac{\Jd(r |t|)^2}{|t|^{\d}}$ for some constant $C_\d$. We define the function $\jr : \Rd \to \R$ by: $\jr : t \mapsto C'_\d \frac{\Jd(r |t|)^2}{|t|^{\d}}$ with the right constant $C'_\d$ so that its Fourier transform satisfies:
\begin{equation}
\label{hjr}
\hjr = \frac{\\1_{\BR} \ast \1_{\BR}}{|\BR|}.
\end{equation}
For all $r > 0$, we have $\hjr(0) = \int_{\Rd} j_r(t) \dd t = 1$ and 
\begin{equation*}
\jr(0) = \frac{|\BR|}{(2\pi)^d}
\end{equation*} 
The family of functions $\{j_r\}_{r \geq 1}$ is an approximation of unity: the mass stays constant equal to $1$ and the functions converge pointwise to $0$ on $\Rd \setminus \{0\}$, while the value at $0$ tends to $+ \infty$.

\subsubsection*{Expressing the number variance}
When $\bX$ is a random point configuration, the following is well-known (see e.g. \cite[Sec.~1.2]{coste2021order}):
\begin{lemma}
\label{lem:expressing_NV}
Assume $\bX$ is stationary, with intensity $1$ and locally square integrable. For all $r > 0$:
\begin{equation}
\label{eq:NV}
\frac{\Var\left[ |\bX \cap \BR| \right]}{|\BR|} = \left\langle \delta_0 + (\RSFM-1) , \frac{\I_{\BR} \ast \I_{\BR}}{|\BR|} \right\rangle = \left\langle \RSFM , \frac{\I_{\BR} \ast \I_{\BR}}{|\BR|} \right\rangle + 1 - |\BR|.
\end{equation}
\end{lemma}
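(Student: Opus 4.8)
\emph{Proof proposal.} Write $N := |\bX \cap \BR| = \sum_{x \in \bX}\I_{\BR}(x)$; local square integrability makes all the moments below finite. The plan is the classical second-moment computation: split the double sum into its diagonal and off-diagonal parts, and express the off-diagonal contribution through $\RSFM$ using the disintegration afforded by stationarity.

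First I would expand
\[
\E[N^2] = \E\Big[\sum_{x \in \bX}\I_{\BR}(x)^2\Big] + \E\Big[\sum_{x,y \in \bX}^{\neq}\I_{\BR}(x)\I_{\BR}(y)\Big].
\]
Since $\I_{\BR}$ is $\{0,1\}$-valued the diagonal sum equals $N$ again, and stationarity together with unit intensity give $\E[N] = |\BR|$, hence $(\E[N])^2 = |\BR|^2$. For the off-diagonal sum I would invoke the disintegration of the second factorial moment measure: for any nonnegative measurable $f$ on $\Rd\times\Rd$,
\[
\E\sum_{x,y \in \bX}^{\neq} f(x,y) = \int_{\Rd}\int_{\Rd} f(x,x+z)\,\dd x\,\RSFM(\dd z),
\]
which is just the definition of $\RSFM$ extended from indicators of product sets to general $f$ by a monotone-class argument (see \cite[Sec.~8.2]{last2017lectures}; the absence of a prefactor is exactly the unit-intensity normalization, compare the ``volume $1$'' remark after Definition \ref{def:MomentMeasure}). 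Taking $f(x,y) = \I_{\BR}(x)\I_{\BR}(y)$ turns the off-diagonal sum into $\int_{\Rd}\big(\int_{\Rd}\I_{\BR}(x)\I_{\BR}(x+z)\,\dd x\big)\RSFM(\dd z)$, and the inner integral equals $|\BR\cap(\BR-z)| = |\BR\cap(\BR+z)| = (\I_{\BR}\ast\I_{\BR})(z)$, using translation invariance of Lebesgue measure and the central symmetry of the ball. Therefore $\E[N^2] = |\BR| + \left\langle\RSFM,\I_{\BR}\ast\I_{\BR}\right\rangle$.

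Subtracting $(\E[N])^2 = |\BR|^2$ and dividing by $|\BR|$ yields $\frac{\Var[N]}{|\BR|} = 1 + \left\langle\RSFM,\frac{\I_{\BR}\ast\I_{\BR}}{|\BR|}\right\rangle - |\BR|$, which is the second identity. The first one then follows from the elementary evaluations $\left\langle\delta_0,\frac{\I_{\BR}\ast\I_{\BR}}{|\BR|}\right\rangle = \frac{(\I_{\BR}\ast\I_{\BR})(0)}{|\BR|} = 1$ and $\left\langle 1,\frac{\I_{\BR}\ast\I_{\BR}}{|\BR|}\right\rangle = \frac{1}{|\BR|}\int_{\Rd}\I_{\BR}\ast\I_{\BR} = |\BR|$, since $\int\I_{\BR}=|\BR|$. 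There is no serious obstacle here; the only steps requiring a little care are the justification of the disintegration identity and the bookkeeping of the symmetry $|\BR\cap(\BR-z)| = (\I_{\BR}\ast\I_{\BR})(z)$ that identifies the inner integral as the value of the autoconvolution of $\I_{\BR}$.
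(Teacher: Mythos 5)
Your proof is correct: the diagonal/off-diagonal split, the disintegration $\E\sum^{\neq}f(x,y)=\int\int f(x,x+z)\,\dd x\,\RSFM(\dd z)$ (which matches the paper's normalization of $\RSFM$ in Definition \ref{def:MomentMeasure}), and the identification of the inner integral with $(\I_{\BR}\ast\I_{\BR})(z)$ together give exactly \eqref{eq:NV}. The paper itself does not prove Lemma \ref{lem:expressing_NV} but cites it as well-known, and your argument is precisely the standard computation being referenced, so there is nothing further to compare.
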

In view of \eqref{hjr} one can re-write \eqref{eq:NV} as:
\begin{equation}
\label{eq:NVdivided}
\frac{\Var\left[ |\bX \cap \BR| \right]}{|\BR|} = \left\langle \RSFM, \hjr \right\rangle + 1 - |\BR|,
\end{equation}
and the hyperuniformity of $\bX$ is thus equivalent to having $\left\langle \RSFM, \hjr \right\rangle = |\BR| - 1 +  o(1)$, while class I hyperuniformity is equivalent to  $\left\langle \RSFM, \hjr \right\rangle  = |\BR| - 1 + \O(r^{-1})$ as $r \to \infty$.

\subsubsection*{Perturbed lattices in dimension 1}
\newcommand{\bd}{\bar{d}}

\label{sec:dim1}
\begin{proposition}
\label{prop:dimension1}
If $\E[|\p_0|]$ is finite, then $\PSL$ is hyperuniform. If $\E[|\p_0|^2]$ is finite, then $\PSL$ is class-I hyperuniform.
\end{proposition}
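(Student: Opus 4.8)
The plan is to follow the dimension-$2$ scheme starting from \eqref{eq:NVdivided}, but using that in dimension $1$ the kernel $\hjr$ is fully explicit and elementary: it is the tent function $\hjr(u)=\bigl(1-\tfrac{|u|}{2r}\bigr)_+$, which is piecewise affine, supported on $[-2r,2r]$, satisfies $\|\hjr\|_\infty\le 1$ and $|\hjr'|\le\tfrac{1}{2r}$ off its kinks, and has kinks only at $\{0,-2r,2r\}$. Writing $\sigma(r)=A+B$ with $A=\sum_{x\in\La\setminus\{0\}}\hjr(x)+1-2r$ and $B=\E\sum_{x\in\La\setminus\{0\}}\bigl(\hjr(x+\p_x-\p_0)-\hjr(x)\bigr)$, the deterministic term is $A=\O(r^{-1})$ exactly as in Claim \ref{claim:deterministic} (Poisson's formula \eqref{Poisson} together with the bound $|j_r(2\pi k)|\preceq r^{-1}|k|^{-2}$ coming from $j_r(t)=\tfrac{\sin^2(rt)}{\pi r t^2}$). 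So the whole point is to bound $B$. Set $p_x:=\p_x-\p_0$, so $\E[p_x]=0$ by Remark \ref{rem:centeredness}, and let $Q:=|\p_x-\p_0|$, which is identically distributed in $x$ with $\E[Q]\le 2\E[|\p_0|]<+\infty$ (and $\E[Q^2]<+\infty$ when $\E[|\p_0|^2]<+\infty$). Since $\sum_x|\hjr'(x)|\,\E|p_x|\preceq \O(1)\cdot\E[Q]<+\infty$, the term linear in $x$ has zero expectation and $B=\E\sum_{x\neq 0}R(x,p_x)$ with $R(x,p):=\hjr(x+p)-\hjr(x)-\hjr'(x)\,p$ the first-order Taylor remainder. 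I then split this sum according to whether $|p_x|\le 2r$ or $|p_x|>2r$.

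\textbf{Large displacements ($|p_x|>2r$).} Here I use only $\|\hjr\|_\infty\le 1$, $|\hjr'|\le\tfrac{1}{2r}$ and $\operatorname{supp}\hjr\subset[-2r,2r]$: pointwise $|R(x,p_x)|\le \I\{|x+p_x|\le 2r\}+\I\{|x|\le 2r\}\bigl(1+\tfrac{|p_x|}{2r}\bigr)$. Summing over $x$ and taking expectations, the three contributions are $\preceq r\,\P(Q>2r)$, $\preceq \E[(Q-2r)_+]$, and $\preceq \E[Q\,\I\{Q>2r\}]$ respectively, all of which are $\preceq\E[Q\,\I\{Q>2r\}]$. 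This tends to $0$ when $\E[Q]<\infty$, and equals $\O(r^{-1})$ when $\E[Q^2]<\infty$ (then $\E[Q\,\I\{Q>2r\}]\le\tfrac{1}{2r}\E[Q^2]$).

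\textbf{Small displacements ($|p_x|\le 2r$).} Now the segment between $x$ and $x+p_x$ has length $\le 2r$; since the kinks of $\hjr$ are spaced exactly $2r$ apart, it contains at most one of them. If it contains none, $\hjr$ is affine along it and $R(x,p_x)=0$. If it contains the kink $k\in\{0,\pm 2r\}$, then $R(x,p_x)=\int_0^{p_x}\bigl(\hjr'(x+s)-\hjr'(x)\bigr)\,ds$ is a triangle-shaped remainder: $\hjr'$ jumps by $\tfrac1r$ at $0$ and by $\tfrac1{2r}$ at $\pm 2r$, the remainder is nonzero only when $|x-k|\le|p_x|$, and a one-line computation gives $\sum_{x:\,|x-k|\le|p_x|}|R(x,p_x)|\preceq \tfrac{|p_x|^2}{r}$. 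Summing the three kinks, the small-displacement part of $B$ is $\preceq\tfrac1r\,\E\bigl[\min(Q,2r)^2\bigr]$.

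\textbf{Conclusion.} Altogether $|B|\preceq \tfrac1r\,\E[\min(Q,2r)^2]+\E[Q\,\I\{Q>2r\}]$. If $\E[|\p_0|^2]<\infty$ this is $\O(r^{-1})$, so $\sigma(r)=\O(r^{-1})$ and $\Var[|\PSL\cap\BR|]=\sigma(r)\,|\BR|=\O(1)$, i.e.\ class-I hyperuniformity. If only $\E[|\p_0|]<\infty$, it is still $o(1)$: $\min(Q,2r)^2/r\le 2Q$ is integrable and tends to $0$ pointwise, so by dominated convergence $\tfrac1r\E[\min(Q,2r)^2]\to 0$, while $\E[Q\,\I\{Q>2r\}]\to 0$ by integrability of $Q$; hence $\sigma(r)\to 0$. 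The only genuinely delicate point is the large-displacement part: those terms are, a priori, not absolutely summable (they are governed by a would-be second moment of the perturbations, which may be infinite), and what rescues the argument is the compact support of $\hjr$ — a point displaced by more than the diameter of $\BR$ simply leaves the support of $\hjr$ and contributes the bounded quantity $-\hjr(x)$, whose total weighted mass stays controlled by $\E[Q\,\I\{Q>2r\}]$. This is precisely what replaces, in dimension $1$, the heavy oscillatory analysis of the term $B_3$ required in dimension $2$.
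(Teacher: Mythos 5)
Your proof is correct and in substance the same as the paper's: both arguments rest on the tent shape of $\hjr$ in dimension $1$ — centeredness of $\p_x-\p_0$ kills the contribution wherever $\hjr$ is affine, and the error is localized near the kinks $\{0,\pm 2r\}$, where the slope jumps by $\O(r^{-1})$ — with the paper merely organizing the bookkeeping by the distance of $x$ to the kink set (with a threshold $\bd$ optimized at the end) rather than by the size of the displacement, and both routes give $o(1)$ under a first moment and $\O(r^{-1})$ under a second moment. Two small inaccuracies to repair: $Q=|\p_x-\p_0|$ is \emph{not} identically distributed in $x$ (the perturbations may be dependent), so you should bound $|\p_x-\p_0|\le|\p_x|+|\p_0|$ and use that the individual perturbations are identically distributed, which yields all your tail and truncated-moment estimates uniformly in $x$ (exactly as the paper does); and in the large-displacement step the middle contribution $\sum_{|x|\le 2r}\P(|\p_x-\p_0|>2r)$ is of order $r\,\P(Q>2r)$, not $\E[(Q-2r)_+]$, though it is still $\preceq \E\left[Q\,\1\{Q>2r\}\right]$, so the stated conclusion stands (likewise, the lattice count near a kink gives an extra harmless $\O(|p_x|/r)$, of total expectation $\O(r^{-1})$).
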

\begin{proof}
We start with introducing an auxiliary function.

\paragraph{Preliminary}
We record the following elementary lemma.
\begin{lemma}
\label{lem:remainder}
Let $f : \Rd \to \R$ be bounded and compactly supported in $\Bm$ for some $m \geq 1$. Assume that $M \geq 10 m$. We have:
\begin{equation}
\label{eq:remainder}
\E \sum_{x \in \La, |x| \geq M} f(x+\p_x-\p_0) \preceq \|f\|_{\infty} \times \E \left[ |\p_0|^\d \times \I \left\lbrace |\p_0| \geq \frac{1}{4} M\right\rbrace \right].
\end{equation}
\end{lemma}
Note that if $\E[|\p_0|^\d] < + \infty$, then the right-hand side of \eqref{eq:remainder} tends to $0$ as $M \to \infty$. 
\begin{proof}[Proof of Lemma \ref{lem:remainder}] Without loss of generality, we can assume that $0 \leq f \leq 1$. {Let $|x| \geq M$. Since $f$ is supported on $\Bm$, then for $f(x + \p_x - \p_0)$ to not be $0$ we need $|\p_x - \p_0| \geq |x| - m \geq \hal |x|$, so $\max(|\p_x|, |\p_0|) \geq \frac{1}{4}{|x|}$. We thus have, since $\p_x$ and $\p_0$ are identically distributed:
\begin{equation*}
\sum_{|x| \geq M} \E  f(x+\p_x-\p_0) \preceq \sum_{|x| \geq M} \P\left(|\p_0| \geq \frac{1}{4}|x| \right),
\end{equation*}
and by elementary lattice counting this discrete sum is comparable to the integral: 
\begin{equation*}
\int_{s \geq \frac{M}{4}} s^{\d-1} \P\left(|\p_0| \geq s \right) \preceq \E\left[ |\p_0|^\d \I \left\lbrace |\p_0| \geq \frac{M}{4} \right\rbrace\right].
\end{equation*}}
\end{proof}

\paragraph{Main computation}
We start by writing, using \eqref{eq:NVdivided} and the expression for $\RSFM$ found in \eqref{alpha2PSL}:
\begin{equation*}
\frac{\Var\left[ |\PSL \cap \BR| \right]}{|\BR|} =  \E \sum_{x \in \La \setminus \{0\}} \hjr(x+\p_x-\p_0) + 1 - |\BR| = A + B, 
\end{equation*}
where $A$ is a deterministic contribution coming from the lattice $\Z$ and $B$ is the “error term” due to the perturbations:
\begin{equation*}
A :=  \sum_{x \in \Z \setminus \{0\}} \hjr(x) + 1 - |\BR|, \quad B :=  \E \sum_{x \in \Z \setminus \{0\}} \hjr(x+\p_x-\p_0) - \hjr(x).
\end{equation*}
The term $A$ is $\O(r^{-1})$ because a stationary lattice is class-I hyperuniform, and we focus on $B$.

We use Lemma \ref{lem:remainder} and write:
\begin{equation}
\label{truncationlattice}
B = \E \sum_{x \in \Z \setminus \{0\}, |x| \leq 100 r} \hjr(x+\p_x-\p_0) - \hjr(x) + \O\left(\E\left[|\p_0| \1_{|\p_0| \geq 10 r}\right]\right).
\end{equation}
Note that if $\E[|\p_0|]$ is finite then the error term tends to $0$ as $r \to \infty$ and if $\E[|\p_0|^2]$ is finite, then by Markov's inequality it is actually $\O(r^{-1})$. 

The function $\hjr$ is a “tent” supported on $[-2r, 2r]$ and whose expression is given by $\hjr(t) = 1 - \frac{|t|}{2r}$ on that interval. In particular, it is continuous, $\O(r^{-1})$-Lipschitz and piecewise affine, with a lack of differentiability at $0$ and $\pm 2r$. For $x \in \Z$, we let $d := \dist\left(x, \{0, -2r, 2r\}\right)$. We introduce some lengthscale $\bd$ to be chosen later with $1 \leq \bd \ll r$. First, using the Lipschitz bound on $\hjr$ we have:
\begin{equation}
\label{prochebord}
\E \sum_{x \in \Z, d \leq \bd}  \hjr(x+\p_x-\p_0) - \hjr(x) \preceq \frac{\bd}{r} \times \E[|\p_0|].
\end{equation}
Next, if $x$ is such that $d \geq \bd$, we write:
\begin{equation*}
\E [\hjr(x+\p_x-\p_0) - \hjr(x)] = \E \left[ \left(\hjr(x+\p_x-\p_0) - \hjr(x) \right) \1_{|\p_x - \p_0| \leq d}\right] + \E \left[ \left(\hjr(x+\p_x-\p_0) - \hjr(x)\right) \1_{|\p_x - \p_0| \geq d}\right].
\end{equation*}
Since $\hjr$ is affine on $(x-d, x + d)$ {with slope $\O(r^{-1})$}, and $\p_x-\p_0$ is centered, we have:
\begin{equation*}
\E \left[ \left(\hjr(x+\p_x-\p_0) - \hjr(x) \right) \1_{|\p_x - \p_0| \leq d}\right] \leq \frac1{r}\E\left[ |\p_x - \p_0|\1_{|\p_x - \p_0| \geq d} \right]\preceq  \frac{1}{r} \E \left[ |\p_0| \1_{|\p_0| \geq \frac{d}{2}}\right].
\end{equation*}
It remains to see that, {since $\hjr$ is $\O(r^{-1})$-Lipschitz}:
\begin{equation}
\label{simpleA}
\E \left[  \left(\hjr(x+\p_x-\p_0) - \hjr(x)\right) \1_{|\p_x - \p_0| \geq d}\right] \preceq \frac{1}{r} \E \left[ |\p_x - \p_0| \1_{|\p_x - \p_0| \geq d}\right] \preceq \frac{1}{r} \E \left[ |\p_0| \1_{|\p_0| \geq \frac{d}{2}}\right] ,
\end{equation}
from which we can derive the following simple bound:
\begin{equation}
\label{simpleB}
\E \sum_{x \in \Z, d \geq \bd, |x| \leq 100 r} \hjr(x+\p_x-\p_0) - \hjr(x) \preceq \E \left[ |\p_0| \1_{|\p_0| \geq \frac{\bd}{2}} \right].
\end{equation}
If $\E[|\p_0|]$ is finite, the right-hand side goes to $0$ as soon as $\bd \to \infty$. On the other hand, if $\E[|\p_0|^2]$ is finite, we might prefer to return to \eqref{simpleA} and compute instead the sum:
\begin{equation}
\label{simpleBprime}
\frac{1}{r}  \sum_{x \in \Z, |x| \leq 100r, d \geq \bd} \E \left[ |\p_0| \1_{|\p_0| \geq \frac{d}{2}}\right] \preceq \frac{1}{r}   \E \left[ |\p_0| \sum_{d = \bd}^{100r} \1_{|\p_0| \geq \frac{d}{2}} \right]  \preceq \frac{1}{r} \E \left[ |\p_0|^2 \right] = \O(r^{-1}).
\end{equation}

In summary, if $\E[|\p_0|]$ is finite, we can choose $1 \ll \bd \ll r$ in \eqref{prochebord}, \eqref{simpleB} and see that the term $B$ goes to $0$ as $r \to \infty$, so $\PSL$ is hyperuniform. If moreover $\E[|\p_0|^2]$ is finite then we can choose $\bd = 1$ in \eqref{prochebord}, \eqref{simpleBprime} and we find that $B = \O(r^{-1})$ thus $\PSL$ is class-I hyperuniform. 
\end{proof}

\subsection*{Acknowledgement}
\

\emph{DD and DF were supported in part by the Labex CEMPI (ANR-11-LABX-0007-01), the ANR project RANDOM (ANR-19-CE24-0014) and by the CNRS GdR 3477 GeoSto.}

\emph{DF is supported by the
Czech Science Foundation, project no. 22-15763S and thanks the University of Lille for its hospitality.} 

\emph{MH is supported by the Deutsche Forschungsgemeinschaft (DFG, German Research Foundation) through the SPP 2265 \textit{ Random Geometric Systems} as well as under Germany's Excellence Strategy EXC 2044 -390685587, Mathematics M\"unster: Dynamics--Geometry--Structure.}

\emph{TL acknowledges the support of JCJC grant ANR-21-CE40-0009 from Agence Nationale de la Recherche, as well as the hospitality of Mathematics Münster, where this project was initiated.}

\bibliographystyle{alpha}
\bibliography{HUPL}

@article{sodin2023random,
	author = {Sodin, Mikhail and Wennman, Aron and Yakir, Oren},
	date-added = {2026-04-11 14:26:56 +0200},
	date-modified = {2026-04-11 14:26:56 +0200},
	journal = {Journal of Statistical Physics},
	number = {10},
	pages = {166},
	publisher = {Springer},
	title = {The random Weierstrass zeta function I: Existence, uniqueness, fluctuations},
	volume = {190},
	year = {2023}}

@article{bjorklund2024hyperuniformity,
	author = {Bj{\"o}rklund, Michael and Hartnick, Tobias},
	date-added = {2026-03-17 17:07:19 +0100},
	date-modified = {2026-03-17 17:07:19 +0100},
	journal = {Mathematische Annalen},
	number = {1},
	pages = {365--426},
	publisher = {Springer},
	title = {Hyperuniformity and non-hyperuniformity of quasicrystals},
	volume = {389},
	year = {2024}}

@article{Be87,
	author = {Beck, J{\'o}zsef},
	doi = {10.1007/BF02392553},
	fjournal = {Acta Mathematica},
	issn = {0001-5962},
	journal = {Acta Math.},
	keywords = {11K38,11K06},
	language = {English},
	pages = {1--49},
	title = {Irregularities of distribution. {I}},
	volume = {159},
	year = {1987},
	zbl = {0631.10034},
	zbmath = {4025531},
	bdsk-url-1 = {https://doi.org/10.1007/BF02392553}}

@unpublished{KLLY,
	archiveprefix = {arXiv},
	author = {Michael A. Klatt and G{\"u}nter Last and Luca Lotz and D. Yogeshwaran},
	eprint = {2506.05907},
	primaryclass = {math.PR},
	title = {Invariant transports of stationary random measures: asymptotic variance, hyperuniformity, and examples},
	url = {https://arxiv.org/abs/2506.05907},
	year = {2025},
	bdsk-url-1 = {https://arxiv.org/abs/2506.05907}}

@unpublished{Fli2025,
	archiveprefix = {arXiv},
	author = {Daniela Flimmel},
	eprint = {2503.12179},
	primaryclass = {math.PR},
	title = {Fitting regular point patterns with a hyperuniform perturbed lattice},
	url = {https://arxiv.org/abs/2503.12179},
	year = {2025},
	bdsk-url-1 = {https://arxiv.org/abs/2503.12179}}

@unpublished{ErHuJaMu23,
	abstract = {We develop a theory of optimal transport for stationary random measures with a focus on stationary point processes and construct a family of distances on the set of stationary random measures. These induce a natural notion of interpolation between two stationary random measures along a shortest curve connecting them. In the setting of stationary point processes we leverage this transport distance to give a geometric interpretation for the evolution of infinite particle systems with stationary distribution. Namely, we characterise the evolution of infinitely many Brownian motions as the gradient flow of the specific relative entropy w.r.t.~the Poisson point process. Further, we establish displacement convexity of the specific relative entropy along optimal interpolations of point processes and establish an stationary analogue of the HWI inequality, relating specific entropy, transport distance, and a specific relative Fisher information.},
	author = {Matthias Erbar and Martin Huesmann and Jonas Jalowy and Bastian M{\"u}ller},
	date-added = {2024-05-29 22:23:41 +0200},
	date-modified = {2024-05-29 22:24:10 +0200},
	eprint = {2304.11145},
	month = {04},
	note = {\url{https://arxiv.org/pdf/2304.11145.pdf}},
	title = {Optimal transport of stationary point processes: {Metric structure}, gradient flow and convexity of the specific entropy},
	year = {2023},
	bdsk-url-1 = {https://arxiv.org/pdf/2304.11145.pdf},
	bdsk-url-2 = {https://arxiv.org/abs/2304.11145}}

@unpublished{HuesLeb,
	abstract = {We investigate the interplay between three possible properties of stationary point processes: i) Finite Coulomb energy with short-scale regularization, ii) Finite $2$-Wasserstein transportation distance to the Lebesgue measure and iii) Hyperuniformity. In dimension $2$, we prove that i) implies ii), which is known to imply iii), and we provide simple counter-examples to both converse implications. However, we prove that ii) implies i) for processes with a uniformly bounded density of points, and that i) - finiteness of the regularized Coulomb energy - is equivalent to a certain property of quantitative hyperuniformity that is just slightly stronger than hyperuniformity itself. Our proof relies on the classical link between $H^{-1}$-norm and $2$-Wasserstein distance between measures, on the screening construction for Coulomb gases (of which we present an adaptation to $2$-Wasserstein space which might be of independent interest), and on recent necessary and sufficient conditions for the existence of stationary "electric" fields compatible with a given stationary point process.},
	author = {Martin Huesmann and Thomas Lebl{\'e}},
	date-added = {2024-05-29 22:19:44 +0200},
	date-modified = {2024-05-29 22:22:46 +0200},
	eprint = {2404.18588},
	month = {04},
	note = {\url{https://arxiv.org/pdf/2404.18588.pdf}},
	title = {The link between hyperuniformity,{ Coulomb energy, and Wasserstein} distance to Lebesgue for two-dimensional point processes},
	year = {2024},
	bdsk-url-1 = {https://arxiv.org/pdf/2404.18588.pdf},
	bdsk-url-2 = {https://arxiv.org/abs/2404.18588}}

@unpublished{BDG,
	abstract = {We study the average $p-$Wasserstein distance between a finite sample of an infinite hyperuniform point process on $\mathbb{R}^2$ and its mean for any $p\geq 1$. The average Wasserstein transport cost is shown to be bounded from above and from below by some multiples of the number of points. More generally, we give a control on the $p-$Wasserstein distance in function of a control on the $L^p$ norm of the difference of the point process and its mean.},
	author = {Raphael Butez and Sandrine Dallaporta and David Garc{\'\i}a-Zelada},
	date-added = {2024-05-29 22:19:15 +0200},
	date-modified = {2024-05-29 22:22:26 +0200},
	month = {04},
	note = {\url{https://arxiv.org/pdf/2404.09549.pdf}},
	title = {On the {Wasserstein} distance between a hyperuniform point process and its mean},
	year = {2024},
	bdsk-url-1 = {https://arxiv.org/pdf/2404.09549.pdf},
	bdsk-url-2 = {https://arxiv.org/abs/2404.09549}}

@unpublished{LRY,
	abstract = {We examine optimal matchings or transport between two stationary point processes and in particular, from a point process to the (integer) lattice or the Lebesgue measure respectively. The main focus of the article is the implication of hyperuniformity (reduced variance fluctuations in point processes) to optimal transport: in dimension $2$, we show that the typical matching cost has finite second moment under a mild logarithmic integrability condition on the reduced pair correlation measure, showing that most planar hyperuniform point processes are $ L^2$-perturbed lattices. Our method does not formally require assumptions on the correlation measure or the variance behaviour and it retrieves known sharp bounds for neutral integrable systems such as Poisson processes, and also applies to hyperfluctuating systems. The proof relies on the estimation of the optimal transport cost between point processes restricted to large windows for a well-chosen cost through their Fourier-Stieljes transforms, related to their structure factor. The existence of an infinite matching is obtained through a compactness argument on the space of random measures.},
	author = {Rapha{\"e}l Lachi{\`e}ze-Rey and D. Yogeshwaran},
	date-added = {2024-05-29 22:18:37 +0200},
	date-modified = {2024-05-29 22:22:37 +0200},
	eprint = {2402.13705},
	month = {02},
	note = {\url{https://arxiv.org/pdf/2402.13705.pdf}},
	title = {Hyperuniformity and optimal transport of point processes},
	year = {2024},
	bdsk-url-1 = {https://arxiv.org/pdf/2402.13705.pdf},
	bdsk-url-2 = {https://arxiv.org/abs/2402.13705}}

@article{soshnikov2000determinantal,
	author = {Soshnikov, Alexander},
	date-added = {2024-04-03 09:09:55 +0200},
	date-modified = {2024-04-03 09:09:55 +0200},
	journal = {Russian Mathematical Surveys},
	number = {5},
	pages = {923},
	publisher = {IOP Publishing},
	title = {Determinantal random point fields},
	volume = {55},
	year = {2000}}

@book{folland2016course,
	author = {Folland, Gerald B},
	date-added = {2024-02-05 17:55:33 +0100},
	date-modified = {2024-02-05 17:55:33 +0100},
	publisher = {CRC press},
	title = {A course in abstract harmonic analysis},
	volume = {29},
	year = {2016}}

@article{MY80,
	author = {Martin, Philippe Andr{\'e} and Yalcin, Talat},
	date-modified = {2024-05-29 22:18:08 +0200},
	journal = {Phys. Rev. A},
	number = {22},
	pages = {435--463},
	title = {The charge fluctuations in classical {Coulomb systems}},
	year = {1980}}

@article{L83,
	author = {Lebowitz, Joel},
	date-modified = {2024-05-29 22:03:21 +0200},
	journal = {Journal of Statistical Physics},
	number = {27},
	pages = {1491--1494},
	title = {Charge fluctuations in {Coulomb systems}},
	year = {1983}}

@book{stein1971introduction,
	author = {Stein, Elias M and Weiss, Guido},
	date-added = {2024-02-05 17:41:02 +0100},
	date-modified = {2024-02-05 17:41:02 +0100},
	publisher = {Princeton university press},
	title = {Introduction to Fourier analysis on Euclidean spaces},
	volume = {1},
	year = {1971}}

@unpublished{coste2021order,
	author = {Coste, Simon},
	date-added = {2024-02-04 21:56:50 +0100},
	date-modified = {2024-02-04 21:57:45 +0100},
	title = {Order, fluctuations, rigidities},
	url = {https://scoste.fr/assets/survey_hyperuniformity.pdf},
	year = {2021},
	bdsk-url-1 = {https://scoste.fr/assets/survey_hyperuniformity.pdf}}

@article{landau1915analytischen,
	author = {Landau, Edmund},
	date-added = {2024-02-04 19:29:00 +0100},
	date-modified = {2024-05-29 22:20:59 +0200},
	journal = {Akad. Wiss.},
	pages = {458--476},
	title = {Zur analytischen {Zahlentheorie der definiten quadratischen Formen} ({{\"U}ber die Gitterpunkte in einem mehrdimensionalen Ellipsoid)}},
	volume = {31},
	year = {1915}}

@article{kim2018effect,
	author = {Kim, Jaeuk and Torquato, Salvatore},
	date-added = {2024-01-05 09:23:48 +0100},
	date-modified = {2024-01-05 09:23:48 +0100},
	journal = {Physical Review B},
	number = {5},
	pages = {054105},
	publisher = {APS},
	title = {Effect of imperfections on the hyperuniformity of many-body systems},
	volume = {97},
	year = {2018}}

@article{gacs1975problem,
	author = {Gacs, Peter and Sz{\'{a}}sz, Domokos},
	date-added = {2024-01-05 09:19:13 +0100},
	date-modified = {2024-05-29 22:20:14 +0200},
	journal = {The Annals of Probability},
	number = {4},
	pages = {597--607},
	publisher = {Institute of Mathematical Statistics},
	title = {On a Problem of {Cox} Concerning Point Processes in $\mathbb{R}^k$ of ``Controlled Variability''},
	volume = {3},
	year = {1975}}

@book{last2017lectures,
	author = {Last, G{\"u}nter and Penrose, Mathew},
	date-added = {2023-12-23 21:52:49 +0100},
	date-modified = {2023-12-23 21:52:49 +0100},
	publisher = {Cambridge University Press},
	title = {Lectures on the Poisson process},
	volume = {7},
	year = {2017}}

@article{gabrielli2002glass,
	author = {Gabrielli, Andrea and Joyce, Michael and Labini, Francesco Sylos},
	date-added = {2023-12-23 20:16:41 +0100},
	date-modified = {2023-12-23 20:16:41 +0100},
	journal = {Physical Review D},
	number = {8},
	pages = {083523},
	publisher = {APS},
	title = {Glass-like universe: Real-space correlation properties of standard cosmological models},
	volume = {65},
	year = {2002}}

@article{torquato2003local,
	author = {Torquato, Salvatore and Stillinger, Frank H},
	date-added = {2023-12-23 20:09:00 +0100},
	date-modified = {2023-12-23 20:09:00 +0100},
	journal = {Physical Review E},
	number = {4},
	pages = {041113},
	publisher = {APS},
	title = {Local density fluctuations, hyperuniformity, and order metrics},
	volume = {68},
	year = {2003}}

@article{torquato2018hyperuniform,
	author = {Torquato, Salvatore},
	date-added = {2023-12-23 20:04:44 +0100},
	date-modified = {2023-12-23 20:04:44 +0100},
	journal = {Physics Reports},
	pages = {1--95},
	publisher = {Elsevier},
	title = {Hyperuniform states of matter},
	volume = {745},
	year = {2018}}

@article{gabrielli2004point,
	author = {Gabrielli, Andrea},
	date-added = {2023-12-23 20:04:22 +0100},
	date-modified = {2023-12-23 20:04:22 +0100},
	journal = {Physical Review E},
	number = {6},
	pages = {066131},
	publisher = {APS},
	title = {Point processes and stochastic displacement fields},
	volume = {70},
	year = {2004}}

@book{Durrett_2019,
	author = {Durrett, Rick},
	date-added = {2023-12-14 14:48:19 +0100},
	date-modified = {2023-12-14 14:48:19 +0100},
	doi = {10.1017/9781108591034},
	isbn = {9781108473682},
	month = apr,
	publisher = {Cambridge University Press},
	title = {Probability: Theory and Examples},
	url = {http://dx.doi.org/10.1017/9781108591034},
	year = {2019},
	bdsk-url-1 = {http://dx.doi.org/10.1017/9781108591034}}

\end{document}